\newcommand{\Z}{{\mathbb Z}}
\newcommand{\RR}{{\mathbb R}}
\newcommand{\R}{{\mathbb R}}
\newcommand{\CC}{{\mathbb C}}
\newcommand{\N}{{\mathbb N}}
\newcommand{\qn}{Q^{ (N)}}
\newcommand{\qd}{Q^{(D)}}
\newcommand{\nl}{L^{ (N)}}
\newcommand{\dl}{L^{ (D)}}
\newcommand{\wl}{\widetilde{L}}
\newcommand{\ph}{{\varphi}}
\newcommand{\lm}{{\lambda}}
\newcommand{\arccosh}{{\mathrm {arccosh}}}
\newcommand{\from}{\colon}
\renewcommand\ge{\geqslant}
\newtheorem{lemma}{Lemma}[section]
\newtheorem{coro}[lemma]{Corollary}
\newtheorem{definition}[lemma]{Definition}
\newtheorem{prop}[lemma]{Proposition}
\newtheorem{theorem}[lemma]{Theorem}
 \newlength\headseptemp
\newcommand{\Condition}{(\mathcal{C})}
\newcommand{\Hmm}[1]{\leavevmode{\marginpar{\tiny%
$\hbox to 0mm{\hspace*{-0.5mm}$\leftarrow$\hss}%
\vcenter{\vrule depth 0.1mm height 0.1mm width \the\marginparwidth}%
\hbox to 0mm{\hss$\rightarrow$\hspace*{-0.5mm}}$\\\relax\raggedright #1}}}
\begin{document}
\title[]{Laplacians on infinite  graphs: Dirichlet and Neumann boundary conditions}

\author[]{Sebastian Haeseler$^1$}
\author[]{Matthias Keller$^2$}
\author[]{Daniel Lenz$^3$}
\author[]{Rados{\l}aw Wojciechowski$^4$}
\address{$^1$ Mathematisches Institut, Friedrich Schiller Universit\"at Jena,
  D- 07743 Jena, Germany, sebastian.haeseler@uni-jena.de.}
\address{$^2$ Mathematisches Institut, Friedrich Schiller Universit\"at Jena,
  D-  07743 Jena, Germany, m.keller@uni-jena.de.}
\address{ $^3$ Mathematisches Institut, Friedrich Schiller Universit\"at Jena,
  D- 07743 Jena, Germany, daniel.lenz@uni-jena.de,\\
  URL: http://www.analysis-lenz.uni-jena.de/ }
\address{$^4$ Department of Mathematics and Computer Science,
York College of The City University of New York, Jamaica, NY 11451, USA, e-mail: rwojciechowski@gc.cuny.edu
}

\begin{abstract}  We study  Laplacians associated to a graph and single out a class of such operators with special regularity properties.  In the case of locally finite graphs, this class consists of  all selfadjoint,  non-negative  restrictions of the standard formal  Laplacian and we can characterize the Dirichlet and Neumann Laplacians as the largest and smallest Markovian restrictions of the standard formal Laplacian.  In the case of general graphs, this class contains  the  Dirichlet and Neumann Laplacians  and we describe how these may differ from each other, characterize when they agree, and study connections  to essential selfadjointness and stochastic completeness.

Finally, we study basic common features of all Laplacians associated to a graph.  In particular, we characterize when the associated semigroup is positivity improving and present some basic estimates on its long term behavior. We also discuss some situations in which the Laplacian associated to a graph is unique and, in this context,  characterize its boundedness.
\end{abstract}
\date{\today} %
\maketitle
\begin{center}
\emph{}
\end{center}

\section{Introduction}
Laplacians on graphs have been studied for a long time  (see, e.g., the monographs \cite{Chu,Col} and references therein).  Much of the research has been devoted to finite graphs and bounded Laplacians.

After sporadic earlier investigations, notably by Dodziuk \cite{Dod0} and Mohar \cite{Mo}, certain  properties related to unboundedness  of the associated Laplacians on infinite graphs  have become a focus of attention in recent years. This concerns, in particular,  essential selfadjointness \cite{dVTHT, Dod, Gol, GS, Jor, JP, KL1, Ma, TH, Web, Woj1}, stochastic (in)completeness \cite{Dod, DM, GHM, Hua, KL1,KL2,  Web, Woj1,Woj2, Woj3} and suitable isoperimetric inequalities \cite{CGY,Dod, Fuj,Kel,KL2,KP, Woj1,Woj2} (see references in the cited works for  further literature as well).

It turns out that all of these works deal with what could be called the `Dirichlet Laplacian' on a graph.  In the essentially selfadjoint case, of course, this is the only Laplacian.  In general, however, further selfadjoint Laplacians exist.  In particular, there exists a `Neumann Laplacian.'  It is not clear when the two Laplacians agree and which properties they share (if they do not agree).  This is the starting point of this paper.  More generally, our aim is to investigate the following three related questions:

 \begin{itemize}
 \item[(Q-1)] Which operators can be considered to be Laplacians associated to a graph?
 \item[(Q-2)] How are these operators related and what are the differences between them?
 \item[(Q-3)] What are the basic properties common to all of them?
 \end{itemize}

We now provide a general overview of the paper and our results on these questions.  For precise statements and definitions of the quantities involved we refer to later sections.

\smallskip

In Section~\ref{Framework}, we give an exposition of basic notation and concepts.  In particular, we introduce graphs, the standard formal Laplacian associated to a graph, and the forms $\qd$ and $\qn$ giving the Dirichlet and Neumann Laplacians, respectively. We also prove a result showing that the `weak domain' of definition of the formal Laplacian actually agrees with its domain (Theorem~\ref{regular}). This result is important for our further considerations and may also be of independent interest.

As for (Q-1), which is studied in Section~\ref{Laplacians}, we note that any graph comes with both a standard formal operator $\wl$ and a closed form $\qd$.  In some sense, $\wl$ is the `maximal' Laplacian associated to the graph and $\qd$ is the `largest' closed form associated to $\wl$.  This leads us to single out Laplacians and forms associated to a graph which satisfy a regularity-type condition, called $\Condition$, implying that the form lies between $\qd$ and $\wl$.  A precise concept is given in Definition~\ref{Associated}.

In the case of locally finite graphs, the corresponding Laplacians turn out to be exactly the selfadjoint restrictions of $\wl$ which are bounded below (Theorem~\ref{locally-finite}).  In the case of general graphs, we do not have an explicit  description of all Laplacians satisfying $\Condition$ in terms of $\wl$.  However, we can show that the Dirichlet operator and the Neumann operator (and all operators between them in the sense of forms) satisfy this condition (Proposition~\ref{PI-prop}). In this sense, our framework seems to be sufficient to address questions (Q-2) and (Q-3) and, in particular, to study the Dirichlet and Neumann Laplacians.

As for (Q-2), our framework allows us to obtain, in an easy way, a general description of how a form satisfying $\Condition$ can be seen as an extension of $\qd$.  This is given in Theorem~\ref{general} of Section~\ref{Laplacians}. This theorem can be seen as a form-type analogue of some basic results in von Neumann extension theory.  On a technical level, the main topic is the description of $1$-harmonic functions $u$ in the domain of the form $Q$ associated to the graph, i.e., $u$ with $$(\wl + 1) u =0$$
belonging to the space $D(Q)$.

\smallskip

In Sections~\ref{Forms} and \ref{Neumann}, we then have a closer look at (Q-2) for Dirichlet and Neumann Laplacians:
In Section~\ref{Forms}, we describe the `difference' between Dirichlet and Neumann Laplacians if they do not agree (Theorem~\ref{main-qd-qn}) and give a  characterization of when the Dirichlet and Neumann Laplacians agree (Corollary~\ref{char-qd-qn}).  We also discuss how our results are related to recent work of Colin de Verdi\`{e}re, Torki-Hamza and Truc \cite{dVTHT,TH}.  In fact, while somewhat different in spirit, our description of the difference between Neumann and Dirichlet Laplacians in Theorem~\ref{main-qd-qn} is certainly inspired by \cite{dVTHT}.

\smallskip

We then turn to characterizing Dirichlet and Neumann Laplacians in the framework of Laplacians associated to a graph in Section~\ref{Neumann}.  Our approach gives immediately that the Dirichlet Laplacian is, in a precise sense, the largest Laplacian associated to a graph.  The main thrust of Section~\ref{Neumann} is to show that the Neumann Laplacian is the smallest Laplacian associated to a graph within the class of Markovian operators (i.e., operators associated to a Dirichlet form).  For our results to work, we have to make the additional assumption of local finiteness of the graph.  For locally finite graphs, Theorem~\ref{char-Neumann} then gives that, among the Markovian restrictions of $\wl$, the Dirichlet Laplacian is the biggest and the Neumann Laplacian is the smallest. While similar results are known for the usual Laplacians on subsets of Euclidean space \cite{Fuk}, we are not aware of any earlier result of this type for graphs.

\smallskip

It is remarkable that the agreement of $\qd$ and $\qn$ is equivalent to the triviality of solutions to $(\wl + 1) u =0$ in $D(\qn)$, as the solvability of this equation in other spaces is known to be related to stochastic completeness and to essential selfadjointness.  In this way, essential selfadjointness, stochastic completeness, and uniqueness of the operator are related.  Details are discussed in Section~\ref{Equation}.  In particular, by examples we show that, apart from the `obvious' implications, no implications between these concepts hold in general.  More specifically, we show that stochastic completeness and essential selfadjointness are not related in general.

\smallskip

We finally turn to question (Q-3) and discuss basics of a theory valid for both Neumann and Dirichlet Laplacians (and many others) in Sections~\ref{Maximum} and \ref{Analogue}.  There, we are mostly concerned with the semigroup associated to these operators:

First, we present a maximum principle for solutions of $(\wl +1) u =0$ and use it to characterize when the semigroup is positivity improving in Section~\ref{Maximum}. This generalizes the corresponding considerations for the Dirichlet Laplacian in \cite{KL1} (see \cite{Web, Woj1, Dav3} for earlier treatment of special Dirichlet Laplacians as well).

We then discuss an analogue to a result of Li on Laplacians on manifolds in our context in Section~\ref{Analogue}.  This result has already been obtained recently in a rather general context \cite{KLVW}.  Here, we present a different proof which is adapted to the graph case.

\smallskip

In Section \ref{Boundedness}, 
we conclude the paper with a study of   situations in which there is  only one selfadjoint restriction of $\wl$. This complements and completes the considerations of the earlier sections which deal with the differences and common features of all selfadjoint restrictions.


\smallskip

In some sense, this paper can be seen as a complement to \cite{KL1}.  There, basic features of the Dirichlet Laplacian were discussed.  Here, we focus on the general case.

\section{Framework and basic results}\label{Framework}
Throughout the paper, let $V$ be a finite or countably infinite set and $m$ a measure on $V$ with full support (i.e., $m$ is a map on $V$ taking values in $(0,\infty)$). We then call $(V,m)$ a \textit{discrete measure space}.  The set of all function from $V$ to $\CC$ is denoted by $C(V)$.

We will introduce operators on $\ell^2 (V,m)$ using Dirichlet forms. To do so, we first briefly recall a few standard facts on forms (see, e.g., \cite{Dav3,Fuk}).    Some of the standard literature on Dirichlet forms only deals with real Hilbert spaces. However, this can easily be extended to complex  Hilbert spaces. Some details are discussed in Appendix \ref{real}.
A \textit{form} $Q$ on a (complex)  Hilbert space  with domain of definition given by a dense subspace $D(Q)$ is a sesquilinear map $Q : D(Q)\times D(Q)\longrightarrow \CC$. The form $Q$ is called \textit{non-negative} if $Q(u,u)\geq 0$ for all $u\in D(Q)$ and \textit{symmetric} if $Q(u,v) = \overline{Q(v,u)}$ for all $u,v\in D(Q)$.  A non-negative symmetric form $Q$ is called \textit{closed} if $D(Q)$ with the inner product
$$\langle u,v\rangle_Q := Q(u,v)+ \langle u, v\rangle$$
is complete, i.e., a Hilbert space.  To each such  form there exists  a unique selfadjoint operator $L$ with
$$D(Q) = \mbox{Domain of definition of $L^{1/2}$}$$
and
$$Q (u,v) = \langle L^{1/2} u, L^{1/2} v\rangle.$$
A  map $C:   \CC\longrightarrow \CC $ with $C(0) =0$ and
$|C(x) - C(y)|\leq |x - y|$ is  called a \textit{normal contraction}.
A closed form $Q$ on a Hilbert space of square integrable functions  is called a \textit{Dirichlet form}  if
$$Q (Cu,C u) \leq Q(u,u)$$
for all $u\in D(Q)$ and all normal contractions $C$.
The relevance of Dirichlet forms comes from the fact that the associated semigroups $(e^{-tL})_{t\geq 0}$ and resolvents $\alpha (L + \alpha)^{-1}$, $\alpha >0$, are \textit{positivity preserving},  i.e., map non-negative functions to non-negative functions and provide  contractions on the space of bounded functions (see, e.g., \cite{BH,Dav2}).

\smallskip

After this summary on forms, we now come to a discussion of graphs over $(V,m)$ and the associated operators.  To a large extent we follow \cite{KL1,HK} to which we refer for
further details and proofs not given below.  (Note that our notation deviates from the notations of \cite{KL1,HK} - which are only concerned with the Dirichlet Laplacian -  in the following way: We denote by $\qn$ the form denoted by $Q^{max}$  in \cite{KL1} and  by $\qd$ the form denoted by $Q$ in \cite{KL1}.)

By a \textit{symmetric weighted graph} over $V$ we mean a pair $(b,c)$ consisting of a map $c \from V \to [0,\infty)$ and a map
 $b \from V\times V \to [0,\infty)$
satisfying the following  properties:
\begin{itemize}
\item $b(x,x)=0$ for all $x\in V$
\item $b(x,y)= b(y,x)$ for all $x,y\in V$
\item $\sum_{y\in V} b(x,y) <\infty$ for all $x\in V$.
\end{itemize}
Then, $x,y\in V$ with $b(x,y)>0$ are called \textit{neighbors} and thought to be
connected by an edge with weight $b(x,y)$.  More generally, $x,y\in V$ are called \textit{connected} by the \textit{path} $(x_0,x_1,\ldots, x_{n+1})$  if $x_0,x_1,\ldots, x_{n+1} \in V$ satisfy $b(x_i, x_{i+1}) >0$,
$i=0,\ldots, n$, with $x_0 = x$ and $x_{n+1} = y$.  A \textit{connected component} of the graph is a maximal subset of $V$ such that all elements in this set are connected. If $V$ has only one connected component, i.e., if any two $x,y\in V$ are connected, then $(b,c)$ is called \textit{connected}.  Symmetric weighted graphs over $(V,m)$ are also known as symmetric Markov chains over $(V,m)$.

\smallskip

We are now going to associate forms and operators to each graph $(b,c)$ over $(V,m)$. These forms and operators  will, of course, depend on the choice of $(b,c,m)$. We will mostly omit this dependence on $(b,c,m)$ in our notation  and only add the corresponding subscripts when necessary to avoid confusion.

To the graph $(b,c)$ over $(V,m)$  we  associate the form  $\qn$  on the Hilbert space $\ell^2 (V,m)$ with domain of definition $D(\qn)$ given by the subspace
$$D (\qn) := \{ u\in \ell^2 (V,m) : \frac{1}{2}\sum_{x,y\in V} b(x,y) |u(x) - u(y)|^2 + \sum_{x\in V} c(x) |u(x)|^2\ < \infty \}$$
and the map
$$\qn : D(\qn) \times D(\qn) \longrightarrow \CC $$
by
$$\qn (u,v) := \frac{1}{2}\sum_{x,y\in V} b(x,y) (u(x) - u(y)) \overline{(v(x) - v(y))  } + \sum_{x\in V} c(x) u(x) \overline{v(x)}.$$
Then, $\qn$ is symmetric, non-negative and   closed. The associated operator will be denoted by $\nl$. We can think of $\nl$ as a Laplacian with Neumann-type boundary conditions.

We will be concerned not only with $\qn$ but with further forms as well. In this context, we use the notation $Q_1 \subseteq Q_2$ to mean that
$$D(Q_1)\subseteq D(Q_2)\;\: \mbox{and}\;\: Q_1 (u,v) = Q_2 (u,v)$$
for all $u,v\in D(Q_1).$
Similarly, for non-negative forms $Q_1$ and $Q_2$, we use the notation $Q_1 \leq Q_2$ to mean that
$$ D (Q_2)\subseteq D(Q_1)\;\:\mbox{and}\;\: Q_1 (u,u) \leq Q_2 (u,u)$$
for all $u\in D(Q_2)$.

Obviously, the set $C_c (V)$ of functions from $C (V)$  with finite support belongs to $D(\qn)$. Thus, we can restrict $\qn$ to this set to obtain the  form $Q^{comp}$
$$Q^{comp} \from C_c (V)\times C_c (V)  \longrightarrow \CC,\;\: Q^{comp}(u,v) := \qn(u,v).$$
The form $Q^{comp}$ is not closed but possesses a unique smallest closed extension called the closure and denoted by $\qd$. The associated selfadjoint operator is denoted by  $\dl$. We can think of $\dl$ as a Laplacian with Dirichlet-type boundary conditions.

\medskip

Note that $\qn$ and $\qd$ are Dirichlet forms. This is rather straightforward  to show for $\qn$ and follows for $\qd$ by general principles. By construction, the form $\qd$ is a regular Dirichlet form, viz, $C_c (V)$ is dense in  the form domain with respect to the form norm induced by $\langle \cdot,\cdot\rangle_{\qd}$. In fact, all regular Dirichlet forms on $(V,m)$ are of the form $\qd = \qd_{(b,c)}$ for suitable graphs $(b,c)$ (see, e.g., \cite{KL1}).

\medskip

While the domains of definition of $\dl$ and $\nl$ can, in general, not  be described explicitly, the action of these operators is easily described. To do so, we introduce the \textit{standard formal Laplacian} $\wl$ associated to the graph $(b,c)$ over $(V,m)$.  This operator will be of fundamental importance in all of our considerations.  It is defined on the space
$$\widetilde{F}:= \{ u\in C(V) : \sum_{y\in V} |b(x,y) u(y)|<\infty\;\mbox{for all $x\in V$ } \} $$
 by
$$\wl u (x) :=\frac{1}{m(x)} \sum_{y\in V} b(x,y) (u(x) - u(y)) + \frac{c(x)}{m(x)}  u(x).$$
Note that, for each $x\in V$,  the sum exists  by the assumption that $u$ belongs to $\widetilde{F}$.

\smallskip

It turns out that $\wl $ has a certain regularity property, viz, functions which are weakly in its domain are actually in its domain.  The  crucial identity connecting $\wl$ and the forms we have in mind  is then given by a certain integration by parts. This is discussed next.  We start by introducing the functions which are weakly in the domain of $\wl$ (see \cite{FLW} as well).

\begin{definition}  Let $(V,m)$ be a discrete measure space and $(b,c)$ a graph over $(V,m)$. Then, $\widetilde{F}^\ast$, the weak domain of the formal Laplacian, is defined  by
$$\widetilde{F}^\ast :=\{ u\in C(V): \sum_{x\in V} |u(x) \wl v (x)| m(x)  <\infty\;\:\mbox{for all $v\in C_c (V)$}\}.$$
\end{definition}

Here, comes the first part of the necessary `integration by parts' as shown in \cite{HK} (see \cite{KL1} for related results as well): For $u\in \widetilde{F}$ and $v\in C_c (V)$, the sum
$$\widetilde{Q} (u,v):= \frac{1}{2}\sum_{x,y\in V} b(x,y) (u(x) - u(y))(\overline{v(x) - v(y)}) + \sum_{x\in V} c(x) u(x) \overline{v(x)}$$
converges absolutely and the equality
\begin{eqnarray}\label{PI}
\widetilde{Q} (u,v)  = \sum_{x\in V} \wl u (x) \overline{v(x)} m(x) = \sum_{x\in V} u (x) \overline{\wl v (x)} m(x)
\end{eqnarray}
holds (where all sums are converging absolutely).

After these preparations we can now state a regularity property of $\wl$.

\begin{theorem} \label{regular} Let $(V,m)$ be a discrete measure space and $(b,c)$ a graph over $(V,m)$. Then, $\widetilde{F} = \widetilde{F}^\ast$.
\end{theorem}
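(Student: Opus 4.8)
The plan is to establish the two inclusions $\widetilde{F}\subseteq\widetilde{F}^\ast$ and $\widetilde{F}^\ast\subseteq\widetilde{F}$ separately. The first is essentially already contained in the integration by parts identity~\eqref{PI} recorded just above, while the second is the substantial direction and is obtained by testing the defining condition of $\widetilde{F}^\ast$ against the indicator functions of single points.

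For $\widetilde{F}\subseteq\widetilde{F}^\ast$, I would take $u\in\widetilde{F}$ and an arbitrary $v\in C_c(V)$. By~\eqref{PI} the sum $\sum_{x\in V} u(x)\overline{\wl v(x)}\,m(x)$ converges absolutely, and absolute convergence says precisely that $\sum_{x\in V}|u(x)\,\wl v(x)|\,m(x)<\infty$, which is the condition defining membership in $\widetilde{F}^\ast$. Since $v\in C_c(V)$ was arbitrary, this yields $u\in\widetilde{F}^\ast$ with no further work.

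For the reverse inclusion $\widetilde{F}^\ast\subseteq\widetilde{F}$, fix $u\in\widetilde{F}^\ast$ and a point $x_0\in V$; the goal is to show $\sum_{y\in V} b(x_0,y)|u(y)|<\infty$. Here I would test against $v=\ind_{\{x_0\}}$, which lies in $C_c(V)$. A direct computation from the definition of $\wl$ gives $\wl\ind_{\{x_0\}}(x_0)=\frac{1}{m(x_0)}\big(\sum_{y\in V} b(x_0,y)+c(x_0)\big)$ and $\wl\ind_{\{x_0\}}(x)=-b(x,x_0)/m(x)$ for $x\neq x_0$. Inserting this into the finiteness condition $\sum_{x\in V}|u(x)\,\wl\ind_{\{x_0\}}(x)|\,m(x)<\infty$, the weight $m(x)$ cancels the factor $1/m(x)$ coming from $\wl$, so the off-diagonal part becomes exactly $\sum_{x\neq x_0} b(x,x_0)|u(x)|$, while the single diagonal term contributes the quantity $|u(x_0)|\big(\sum_{y\in V} b(x_0,y)+c(x_0)\big)$. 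Using the symmetry $b(x,x_0)=b(x_0,x)$ and $b(x_0,x_0)=0$, the off-diagonal part equals $\sum_{y\in V} b(x_0,y)|u(y)|$. Since all terms are non-negative, finiteness of the whole expression forces finiteness of this off-diagonal sum, and as $x_0$ was arbitrary we conclude $u\in\widetilde{F}$.

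The argument is short, and what I would regard as the main, if modest, obstacle is the bookkeeping at the diagonal: one must verify that the single term $x=x_0$ is finite on its own so that it can be separated off without interfering with the extraction of the off-diagonal sum. This uses the standing summability assumption $\sum_{y\in V} b(x_0,y)<\infty$ from the definition of a graph together with $c(x_0)<\infty$. The same assumption also ensures $\ind_{\{x_0\}}\in\widetilde{F}$, so that $\wl\ind_{\{x_0\}}$ is well defined pointwise and the computation above is legitimate.
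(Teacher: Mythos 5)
Your proof is correct and follows essentially the same route as the paper: both directions are handled identically, with $\widetilde{F}\subseteq\widetilde{F}^\ast$ read off from \eqref{PI} and the reverse inclusion obtained by testing the defining condition of $\widetilde{F}^\ast$ against the point mass $\delta_{x_0}\in C_c(V)$, computing $\wl\delta_{x_0}$ explicitly, and using the symmetry of $b$ together with non-negativity of the terms to extract $\sum_{y\in V} b(x_0,y)|u(y)|<\infty$. You merely spell out the bookkeeping (diagonal versus off-diagonal terms) that the paper compresses into the phrase ``$(*)$ easily gives the statement.''
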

\begin{proof} The inclusion $\widetilde{F}\subseteq \widetilde{F}^\ast$ follows from \eqref{PI}.
It remains to show the other inclusion $\widetilde{F}^\ast \subseteq \widetilde{F}$:  Let $u\in \widetilde{F}^\ast$ be given.   We have to show the absolute convergence of $\sum_{z\in V} b(x,z) u(z)$ for any $x\in V$. Let $\delta_x$  be the characteristic function of $\{x \}$.  For each $z\in V$, we set $B_z:=\sum_{y\in V} b(z,y) + c(z)$. Then, a direct calculation shows that
$$ \wl \delta_x (z) = \frac{1}{m(z)} ( B_z \delta_x (z) - b(x,z)).\;\:\; (*)$$
As $\delta_x$ belongs to $C_c (V)$, the absolute convergence of $\sum u(z) \wl \delta_x (z) m(z)$ for each $x\in V$ follows by the assumption on $u$. Now, $(*)$ easily gives the statement.
\end{proof}

\textbf{Remark.} The previous theorem seems particularly remarkable to us as it does not seem to have a direct counterpart in the case of the usual Laplace-Beltrami $\Delta_M$  on a Riemannian manifold $M$.    Certainly, the existence of $\langle u, \Delta v\rangle  $ for all $v \in C_c^\infty (M)$ does not imply any differentiability properties of $u$ (as it will hold, in particular, for any measurable bounded function with compact support).

\bigskip

As a consequence of the previous theorem we obtain that weak generalized eigenfunctions are generalized eigenfunctions:

\begin{coro} \label{eigenfunction}  Let $(V,m)$ be a discrete measure space, $(b,c)$ a graph over $(V,m),$ $u\in C(V)$ and $\lambda\in \RR$. Then, the following assertions are equivalent:
\begin{itemize}
\item[(i)] The function  $u$ belongs to $\widetilde{F}$ and $(\wl- \lambda ) u =0$, i.e., $u$ is a generalized eigenfunction of $\wl$ to the eigenvalue $\lambda$.

\item[(ii)] The function $u$ belongs to $\widetilde{F}^\ast$ and $\sum_{x\in V} u(x) \overline{(\wl - \lambda) v (x)} m(x) =0$ for all $v\in C_c (V)$, i.e., $u$ is a weak generalized eigenfunction of $\wl$.
\end{itemize}
\end{coro}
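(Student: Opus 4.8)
The plan is to deduce both implications directly from the regularity result Theorem~\ref{regular}, which identifies $\widetilde{F}$ with $\widetilde{F}^\ast$, together with the integration by parts formula \eqref{PI}. Since Theorem~\ref{regular} makes the membership clauses ``$u\in\widetilde{F}$'' and ``$u\in\widetilde{F}^\ast$'' interchangeable, the real work reduces to relating the pointwise equation $(\wl-\lambda)u=0$ to the weak testing condition against $C_c(V)$.

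For the implication (i)$\Rightarrow$(ii), I would first note that $u\in\widetilde{F}$ yields $u\in\widetilde{F}^\ast$ by Theorem~\ref{regular}. Then, for an arbitrary $v\in C_c(V)$, I would invoke the second equality in \eqref{PI} to shift $\wl$ off of $v$ and onto $u$, writing $\sum_{x\in V} u(x)\overline{\wl v(x)}\,m(x)=\sum_{x\in V}\wl u(x)\overline{v(x)}\,m(x)$, with all sums absolutely convergent by \eqref{PI}. Using $\wl u=\lambda u$ together with the reality of $\lambda$ (so that $\overline{\lambda}=\lambda$), the weak expression $\sum_{x\in V} u(x)\overline{(\wl-\lambda)v(x)}\,m(x)$ collapses to $\lambda\sum_{x\in V} u(x)\overline{v(x)}\,m(x)-\lambda\sum_{x\in V} u(x)\overline{v(x)}\,m(x)=0$, which is precisely (ii).

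For the converse (ii)$\Rightarrow$(i), Theorem~\ref{regular} first upgrades $u\in\widetilde{F}^\ast$ to $u\in\widetilde{F}$, and this membership is exactly what licenses the application of \eqref{PI}. Running the same integration by parts, the hypothesis of (ii) rewrites as $\sum_{x\in V}\bigl((\wl-\lambda)u(x)\bigr)\overline{v(x)}\,m(x)=0$ for every $v\in C_c(V)$. I would then test against $v=\delta_x$ for each fixed $x\in V$; since this reduces the sum to the single term $\bigl((\wl-\lambda)u(x)\bigr)\,m(x)$ and $m(x)>0$, we obtain $(\wl-\lambda)u(x)=0$ pointwise, which is (i).

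All of the genuine difficulty sits in Theorem~\ref{regular} and in the integration by parts \eqref{PI}, both of which are already established; the corollary itself is essentially bookkeeping. The only points demanding a little care are the use of the reality of $\lambda$ to dispose of the conjugate of $\lambda v$, and the observation that one is entitled to move $\wl$ across the pairing only because Theorem~\ref{regular} has placed $u$ in $\widetilde{F}$ in the first place.
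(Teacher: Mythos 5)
Your proof is correct and is exactly the argument the paper intends: the corollary is stated as an immediate consequence of Theorem~\ref{regular} combined with the integration by parts formula \eqref{PI}, which is precisely how you argue (including the routine step of testing against $\delta_x$ and using $m(x)>0$, $\lambda\in\RR$). No gaps; your write-up simply makes explicit what the paper leaves implicit.
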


\section{Laplacians associated to a graph}\label{Laplacians}
In this section, we introduce a special class of operators and forms associated to a graph. As will become clear in the paper, these forms and operators can be considered as particularly regular Laplacians on a graph.  In this section, we develop some basics of their theory.  In particular, Theorem \ref{general} gives a form-type  analogue of what might be seen as a basic ingredient of von Neumann extension theory for symmetric operators.
Moreover, we show that all the `usual' Laplacians fall into our framework. More precisely, we show in Theorem \ref{locally-finite} that, in the locally finite case (and even a bit more generally), our class consists of  the non-negative selfadjoint restrictions of $\wl$. In the case of general graphs, we show that the Dirichlet and Neumann operators (and all operators between them in the sense of forms)  belong to the class.

\bigskip

Whenever we are given a graph with an associated standard formal Laplacian  $\wl$ we call a selfadjoint restriction of $\wl$ a \textit{Laplacian associated to the graph}. If this restriction is bounded below, we call  the induced form a \textit{form  associated to the graph}.  We are going to single out a special class of operators associated to a graph and study some of their properties.  We start with the definition of the class.

\begin{definition}  \label{Associated} (Forms satisfying $\Condition$) Let $(V,m)$ be a discrete measure space, $(b,c)$ a graph over $(V,m)$ and $\wl=\wl_{(b,c,m)}$.   A symmetric form $Q$ on $\ell^2 (V,m)$ with domain $D$  is said to satisfy condition $\Condition$ with respect to $(b,c)$ if

\begin{itemize}
\item[(C0)] $Q$ is non-negative and closed,
\item[(C1)]  $C_c (V) \subseteq D$,
\item[(C2)]  For any $u\in D$ and any $v\in C_c (V)$ the sum $\sum_{x\in V} u(x) \wl v(x) m(x)$ converges absolutely and the equality
$$Q(u,v) = \sum_{x\in V}  u (x) \overline{ \wl v (x) } m(x)$$
holds.
\end{itemize}
The selfadjoint operator $L$ induced by the form is then also said to satisfy $\Condition$.
\end{definition}

\textbf{Remark.} The requirement in (C0) that $Q$ is non-negative could be replaced by the assumption that $Q$ is bounded below (with appropriate changes). We assume that $Q\geq 0$  in order to simplify the notation later and not have to worry about some constants.

\bigskip

Theorem~\ref{regular} combined with \eqref{PI}  allows us to restate condition (C2) as follows.

\begin{coro} \label{regular-coro}  Let $(V,m)$ be a discrete measure space and $(b,c)$ a graph over $(V,m)$.  Let the form $Q$ with domain $D$ satisfy $\Condition$ with respect to $(b,c)$. Then, $D\subseteq \widetilde{F}$ and
$$Q(u,v) = \sum_{x\in V} \wl  u (x) \overline{v(x)} m(x)$$
holds for all $u\in D$ and $v\in C_c (V)$.
\end{coro}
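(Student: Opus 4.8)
The plan is to read condition (C2) as precisely the statement that every $u\in D$ lies in the weak domain $\widetilde{F}^\ast$, and then to transport this membership through the two facts already at our disposal: Theorem~\ref{regular} to upgrade weak membership to genuine membership in $\widetilde{F}$, and the integration-by-parts identity \eqref{PI} to shift the formal Laplacian from $v$ onto $u$.

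First I would establish $D\subseteq\widetilde{F}$. Fix $u\in D$; since $Q$ is a form on $\ell^2(V,m)$ we have $D\subseteq\ell^2(V,m)\subseteq C(V)$, so $u\in C(V)$. By (C2), for every $v\in C_c(V)$ the sum $\sum_{x\in V} u(x)\wl v(x)m(x)$ converges absolutely, and this is exactly the defining requirement $\sum_{x\in V}|u(x)\wl v(x)|m(x)<\infty$ for $u$ to belong to $\widetilde{F}^\ast$. Hence $D\subseteq\widetilde{F}^\ast$, and Theorem~\ref{regular} (which asserts $\widetilde{F}=\widetilde{F}^\ast$) immediately gives $D\subseteq\widetilde{F}$.

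Second, I would derive the claimed formula. Now that $u\in\widetilde{F}$ and $v\in C_c(V)$, the identity \eqref{PI} applies and yields
$$\sum_{x\in V} u(x)\overline{\wl v(x)}m(x)=\sum_{x\in V}\wl u(x)\overline{v(x)}m(x),$$
with all sums converging absolutely. Combining this with the equality supplied by (C2), namely $Q(u,v)=\sum_{x\in V} u(x)\overline{\wl v(x)}m(x)$, gives $Q(u,v)=\sum_{x\in V}\wl u(x)\overline{v(x)}m(x)$ for all $u\in D$ and $v\in C_c(V)$, which is the assertion.

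I do not expect a genuine obstacle here: the corollary is engineered to be a direct consequence of Theorem~\ref{regular} and \eqref{PI}. The only point demanding a little care is matching the absolute-convergence clause of (C2) with the definition of $\widetilde{F}^\ast$ (harmless, since $|\overline{\wl v}|=|\wl v|$), so that Theorem~\ref{regular} may legitimately be invoked to pass from $\widetilde{F}^\ast$ to $\widetilde{F}$.
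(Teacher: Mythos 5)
Your proof is correct and is precisely the argument the paper intends: the paper presents this corollary as an immediate consequence of reading (C2) as the statement $D\subseteq\widetilde{F}^\ast$, invoking Theorem~\ref{regular} to conclude $D\subseteq\widetilde{F}$, and then using \eqref{PI} to move $\wl$ from $v$ onto $u$. Your attention to the harmless discrepancy between the absolute-convergence clause of (C2) and the modulus in the definition of $\widetilde{F}^\ast$ is a fine point the paper leaves implicit, but there is no difference in substance.
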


The next proposition gathers some basic properties of  forms and operators satisfying $\Condition$ (and gives, in particular, that they are associated to a graph). Recall from Section \ref{Framework} the definition of $\langle \cdot,\cdot\rangle_Q $ via
$$\langle u, v\rangle_Q = Q (u,v) + \langle u, v\rangle.$$

\begin{prop} \label{Decomposition}  Let $(V,m)$ be a discrete measure space, $(b,c)$ a graph over $(V,m)$ and  $Q$ a form with domain $D$ satisfying $\Condition$ with respect to $(b,c)$. Then, the following properties hold:

(a)  $D(\qd)$ is a closed subspace of the Hilbert space $(D,\langle\cdot,\cdot\rangle_Q)$ and  $\qd$ and $Q$ agree on $D(\qd)$.

 (b) The selfadjoint operator $L$ associated to $Q$ is a restriction of $\wl$ and is non-negative.
\end{prop}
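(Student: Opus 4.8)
The plan is to deduce both parts essentially from the identity in Corollary~\ref{regular-coro} together with the minimality of the closure $\qd$ as the smallest closed extension of $Q^{comp}$.

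For part (a), I first observe that $Q$ and $Q^{comp}$ agree on $C_c(V)$. Indeed, for $u,v \in C_c(V)$, condition (C2) gives $Q(u,v) = \sum_{x\in V} u(x) \overline{\wl v(x)} m(x)$, while the integration by parts formula \eqref{PI}, applied with both arguments in $C_c(V) \subseteq \widetilde{F}$, shows that this same sum equals $\widetilde{Q}(u,v) = Q^{comp}(u,v)$. Since $Q$ is closed by (C0) and restricts to $Q^{comp}$ on $C_c(V)$, it is a closed extension of $Q^{comp}$; as $\qd$ is by definition the smallest such extension, I conclude $\qd \subseteq Q$, that is, $D(\qd) \subseteq D$ and the two forms agree on $D(\qd)$. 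This already yields the second assertion of (a). For the closedness of $D(\qd)$ in $(D, \langle\cdot,\cdot\rangle_Q)$, I note that the agreement of the forms makes the inner products $\langle\cdot,\cdot\rangle_{\qd}$ and $\langle\cdot,\cdot\rangle_Q$ coincide on $D(\qd)$. Hence the inclusion $D(\qd) \hookrightarrow D$ is isometric for the respective form norms, and since $(D(\qd), \langle\cdot,\cdot\rangle_{\qd})$ is complete (as $\qd$ is closed), its image is a complete, hence closed, subspace of $(D, \langle\cdot,\cdot\rangle_Q)$.

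For part (b), non-negativity of $L$ is immediate, since $L$ is the selfadjoint operator associated to the non-negative form $Q$. To see that $L$ is a restriction of $\wl$, I take $u \in D(L)$. By Corollary~\ref{regular-coro} we have $u \in \widetilde{F}$ and $Q(u,v) = \sum_{x\in V} \wl u(x) \overline{v(x)} m(x)$ for every $v \in C_c(V)$. On the other hand, the defining relation of the associated operator gives $Q(u,v) = \langle Lu, v\rangle = \sum_{x\in V} Lu(x) \overline{v(x)} m(x)$ for all $v \in D \supseteq C_c(V)$. Comparing these two expressions and choosing $v = \delta_y$ for each $y \in V$ yields $\wl u(y) = Lu(y)$, so $Lu = \wl u$. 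Thus $L$ agrees with $\wl$ on its domain, i.e., $L$ is a restriction of $\wl$.

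The step I expect to carry the most weight is establishing $\qd \subseteq Q$: it hinges on correctly matching $Q$ with $Q^{comp}$ on $C_c(V)$ via \eqref{PI} and then invoking the characterization of $\qd$ as the smallest closed extension of $Q^{comp}$. Everything else is a routine consequence of the form--operator correspondence and the regularity identity of Corollary~\ref{regular-coro}.
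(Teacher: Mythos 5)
Your proof is correct and takes essentially the same route as the paper: both parts rest on the observation that (C2) together with \eqref{PI} forces $Q$ to agree with $Q^{comp}$ on $C_c(V)$, so that $D(\qd)$ embeds isometrically into $(D,\langle\cdot,\cdot\rangle_Q)$ as a complete (hence closed) subspace, while (b) follows from Corollary~\ref{regular-coro} via the standard form--operator correspondence by testing against the functions $\delta_y$. The paper's proof is merely terser; your explicit appeal to the minimality of the closure of $Q^{comp}$ and the $\delta_y$-testing argument are exactly what its word ``immediate'' hides.
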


\begin{proof}  (a)  Note that $Q$ agrees with $\qd$ on $C_c (V)$ by the assumptions on $Q$. Thus, the closure of $C_c (V)$ with respect to  $\langle \cdot,\cdot\rangle_Q$ is exactly $D(\qd)$ and $D(\qd)$ is a closed subspace of the Hilbert space $(D,\langle\cdot,\cdot\rangle_Q)$.

\smallskip

(b) This is immediate from Corollary \ref{regular-coro}.
\end{proof}

A direct consequence of the previous proposition is the following maximality property of $\qd$.

\begin{coro}\label{Maximal}  Let $(V,m)$ be a discrete measure space and $(b,c)$ a graph over $(V,m)$. Then,  $Q\leq \qd$ holds for any form $Q$ satisfying $\Condition$ with respect to $(b,c)$.
\end{coro}

\bigskip

In order to state our main abstract result on the description of forms satisfying $\Condition$ we need one further piece of notation.

\begin{definition} (Harmonic function)  Let $(V,m)$ be a discrete measure space and $(b,c)$ a graph over $(V,m)$.  For a form $Q$ with domain $D$ satisfying $\Condition$ with respect to $(b,c)$,  the space of $1$-Q-harmonic functions $\mathcal{H}^{(Q)}$ is defined by
$$\mathcal{H}^{(Q)} :=\{u\in D : (\wl + 1) u = 0\}.$$
\end{definition}

\textbf{Remark.} By Corollary~\ref{eigenfunction}, the space $\mathcal{H}^{(Q)}$ could also be defined via `weak solutions', i.e.,
$$ \mathcal{H}^{(Q)} =\{u\in D: \sum_{x\in V} u(x) \overline{(\wl +1)  v(x)}  m(x) = 0\;\:\mbox{ for all $v\in C_c (V)$}\}.$$

\bigskip

Here is the main result of this section.

\begin{theorem}\label{general}  Let $(V,m)$ be a discrete measure space and $(b,c)$ a graph over $(V,m)$.  Let the form  $Q$ with domain $D$ satisfy $\Condition$ with respect to $(b,c)$.  Then, for $u\in D$,  the following assertions are equivalent:
\begin{itemize}

\item[(i)] $(\wl + 1) u = 0$.

\item[(ii)] $u$ is orthogonal to  $D(\qd)$ with respect to the inner product $\langle \cdot,\cdot\rangle_Q$.
\end{itemize}
Therefore, the Hilbert space $(D,\langle\cdot,\cdot\rangle_Q)$ can be decomposed as an orthogonal sum
$$D  =D(\qd) \oplus \mathcal{H}^{(Q)}.$$

\end{theorem}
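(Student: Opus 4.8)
The plan is to prove the equivalence (i) $\Leftrightarrow$ (ii) directly and then read off the orthogonal decomposition as a formal consequence. The key identity at our disposal is condition (C2), which in the form supplied by Corollary~\ref{regular-coro} tells us that $Q(u,v) = \sum_{x\in V} \wl u(x)\,\overline{v(x)}\,m(x)$ for every $u\in D$ and every $v\in C_c(V)$. Since $D\subseteq \widetilde F$ by Proposition~\ref{Decomposition}(b) / Corollary~\ref{regular-coro}, the expression $\wl u$ makes pointwise sense, so I can manipulate it freely.

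First I would compute, for $u\in D$ and $v\in C_c(V)$, the quantity $\langle u,v\rangle_Q = Q(u,v) + \langle u,v\rangle$. Using Corollary~\ref{regular-coro} for the first term and writing out the $\ell^2$ inner product for the second, this becomes
\[
\langle u,v\rangle_Q = \sum_{x\in V} \wl u(x)\,\overline{v(x)}\,m(x) + \sum_{x\in V} u(x)\,\overline{v(x)}\,m(x) = \sum_{x\in V} (\wl+1)u(x)\,\overline{v(x)}\,m(x),
\]
where all sums are finite because $v$ has finite support. Hence $\langle u,v\rangle_Q = 0$ for all $v\in C_c(V)$ if and only if $(\wl+1)u(x)=0$ for every $x\in V$, i.e.\ if and only if $(\wl+1)u=0$. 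This already establishes (i) $\Leftrightarrow$ ``$u\perp_Q C_c(V)$''.

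Next I would bridge from $C_c(V)$ to the full space $D(\qd)$. By Proposition~\ref{Decomposition}(a), $D(\qd)$ is exactly the closure of $C_c(V)$ inside the Hilbert space $(D,\langle\cdot,\cdot\rangle_Q)$. Because the map $w\mapsto \langle u,w\rangle_Q$ is continuous on $(D,\langle\cdot,\cdot\rangle_Q)$ for fixed $u$, vanishing on the dense subset $C_c(V)$ of $D(\qd)$ is equivalent to vanishing on all of $D(\qd)$. This upgrades the previous equivalence to (i) $\Leftrightarrow$ (ii), which is the heart of the theorem. The only subtlety worth being careful about—and the step I expect to be the mild obstacle—is ensuring the continuity/density argument is applied in the correct topology: one must note that $Q$ and $\qd$ genuinely agree on $D(\qd)$ (Proposition~\ref{Decomposition}(a)), so that the inner product $\langle\cdot,\cdot\rangle_Q$ restricted to $D(\qd)$ coincides with $\langle\cdot,\cdot\rangle_{\qd}$ and the closure statement transfers cleanly.

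Finally, the orthogonal decomposition is immediate. The space $(D,\langle\cdot,\cdot\rangle_Q)$ is a Hilbert space by (C0), and $D(\qd)$ is a closed subspace of it by Proposition~\ref{Decomposition}(a). Therefore $D = D(\qd)\oplus D(\qd)^{\perp_Q}$. But the equivalence (i) $\Leftrightarrow$ (ii) identifies $D(\qd)^{\perp_Q}$ precisely with $\{u\in D : (\wl+1)u=0\} = \mathcal H^{(Q)}$, giving $D = D(\qd)\oplus \mathcal H^{(Q)}$ as claimed.
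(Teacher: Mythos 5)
Your proof is correct and follows essentially the same route as the paper's: both rest on the identity $\langle u,v\rangle_Q = \sum_{x\in V}(\wl+1)u(x)\,\overline{v(x)}\,m(x)$ for $v\in C_c(V)$ (via Corollary~\ref{regular-coro}), then pass from perpendicularity to $C_c(V)$ to perpendicularity to $D(\qd)$ by the closure statement in Proposition~\ref{Decomposition}(a), and read off the decomposition from closedness of $D(\qd)$ in $(D,\langle\cdot,\cdot\rangle_Q)$. No gaps; your explicit attention to the density/continuity step is a point the paper leaves implicit.
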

\begin{proof} It suffices to show the equivalence of (i) and (ii). The remaining statement is then immediate.  Now, obviously, $(\wl + 1) u=0$ is equivalent to
$$ \sum_{x\in V}  (\wl + 1) u (x) \overline{ v(x)} m(x) = 0$$
for any $v\in C_c (V)$. By Corollary~\ref{regular-coro}, this is equivalent to
$$ 0 = Q(u,v) + \langle u,v\rangle = \langle u, v\rangle_Q $$
for all $v\in C_c (V)$. As $D(\qd)$ is the closure of $C_c (V)$ in $D$ with respect to $\langle \cdot, \cdot \rangle_Q$, we obtain the desired equivalence.
\end{proof}

\begin{coro} \label{general-coro} Let $(V,m)$ be a discrete measure space and $(b,c)$ a graph over $(V,m)$. Let the form  $Q$ with domain $D$ satisfy   $\Condition$ with respect to $(b,c)$. Define
$\mathcal{B}:= \mathcal{B} (Q):= D / D (\qd)$. Then, for each $v \in \mathcal{B}$ there exists a unique  $w = w_v \in D$ with
\begin{itemize}
\item $(\wl + 1) w =0$
\item $[w] = v$
\end{itemize}
and the map
$ \mathcal{B} \longrightarrow\mathcal{H}^{(Q)},$ $ v \mapsto w_v$,
is a bijection and even a unitary (if both vector spaces are equipped with the induced Hilbert space structure).
\end{coro}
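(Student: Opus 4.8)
The plan is to derive Corollary~\ref{general-coro} as a direct consequence of the orthogonal decomposition $D = D(\qd) \oplus \calH^{(Q)}$ established in Theorem~\ref{general}. Since $\calB = D/D(\qd)$ is by definition the quotient of the Hilbert space $(D, \langle\cdot,\cdot\rangle_Q)$ by the closed subspace $D(\qd)$ (closed by Proposition~\ref{Decomposition}(a)), the standard fact from Hilbert space theory is that this quotient is canonically unitarily isomorphic to the orthogonal complement of $D(\qd)$ in $D$. Theorem~\ref{general} identifies that orthogonal complement precisely as $\calH^{(Q)}$. So the entire statement amounts to spelling out this canonical isomorphism and checking it has the claimed properties.

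First I would fix $v \in \calB$ and choose any representative $\hat{w} \in D$ with $[\hat{w}] = v$. Using the orthogonal decomposition from Theorem~\ref{general}, I would write $\hat{w} = w_0 + w$ with $w_0 \in D(\qd)$ and $w \in \calH^{(Q)}$, the decomposition being unique. This $w$ is the desired element: it satisfies $(\wl + 1)w = 0$ by the definition of $\calH^{(Q)}$, and since $w_0 \in D(\qd)$ we have $[w] = [\hat{w}] = v$. For uniqueness, I would observe that if $w, w' \in D$ both satisfy $(\wl+1)w = (\wl+1)w' = 0$ and $[w] = [w'] = v$, then $w - w' \in D(\qd)$ (because their classes agree) while $w - w' \in \calH^{(Q)}$ (since $\calH^{(Q)}$ is a linear subspace); as $D(\qd) \cap \calH^{(Q)} = \{0\}$ by the orthogonality in Theorem~\ref{general}, we conclude $w = w'$.

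Next I would verify that the assignment $v \mapsto w_v$ is linear and bijective. Linearity follows from the uniqueness just established together with linearity of the quotient map and of $\wl$. Injectivity holds because $w_v = 0$ forces $v = [0] = 0$; surjectivity onto $\calH^{(Q)}$ holds because any $w \in \calH^{(Q)}$ is itself a representative of its own class $[w]$, giving $w_{[w]} = w$. Finally, for the unitarity claim, I would equip $\calB$ with the quotient Hilbert space norm and $\calH^{(Q)}$ with the norm induced from $\langle\cdot,\cdot\rangle_Q$. The key identity is that for $v \in \calB$ with corresponding $w_v \in \calH^{(Q)}$, the quotient norm $\|v\|_{\calB} = \inf_{w_0 \in D(\qd)} \|w_v - w_0\|_Q$ equals $\|w_v\|_Q$, precisely because $w_v$ is orthogonal to $D(\qd)$ so the infimum is attained at $w_0 = 0$ (this is the Pythagorean identity $\|w_v - w_0\|_Q^2 = \|w_v\|_Q^2 + \|w_0\|_Q^2$). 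Hence the map is norm-preserving, and being a norm-preserving linear bijection between Hilbert spaces, it is unitary.

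The only genuine point requiring care — and the step I would treat as the main obstacle — is ensuring the abstract Hilbert-space quotient machinery applies, namely that $D(\qd)$ is closed in $(D, \langle\cdot,\cdot\rangle_Q)$ so that the orthogonal projection and the quotient norm behave as expected. This is already supplied by Proposition~\ref{Decomposition}(a), so in effect the corollary is a routine repackaging of Theorem~\ref{general} via the canonical unitary equivalence between a quotient by a closed subspace and that subspace's orthogonal complement. The remaining verifications are formal.
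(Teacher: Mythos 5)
Your proof is correct and follows exactly the route the paper intends: the paper states Corollary~\ref{general-coro} without a separate proof, treating it as an immediate consequence of the orthogonal decomposition $D = D(\qd) \oplus \mathcal{H}^{(Q)}$ from Theorem~\ref{general} together with the closedness of $D(\qd)$ from Proposition~\ref{Decomposition}(a), which is precisely the canonical quotient-versus-orthogonal-complement identification you spell out. Your verification of existence, uniqueness, and unitarity (via the Pythagorean identity for the quotient norm) is a faithful and complete elaboration of that implicit argument.
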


\textbf{Remark.} One can think of $\mathcal{B} (Q)$ as a general type of boundary value of the elements of $D$.  Accordingly, the corollary gives the existence and uniqueness of a solution to a boundary value problem.

\bigskip

After this discussion of general features of the class of operators and forms satisfying $\Condition$, we now discuss important examples of such operators.  First, we show that the forms $\qn$ and $\qd$, and all closed forms between them, belong to this class.

\begin{prop}\label{PI-prop}  Let $(V,m)$ be a discrete measure space and $(b,c)$ a graph over $(V,m)$.  Then, any closed form $Q$  with $\qd \subseteq Q \subseteq \qn$  satisfies $\Condition$ with respect to $(b,c)$.  In particular, the selfadjoint operator $L$ associated to such a form $Q$ is a restriction of $\wl$.
\end{prop}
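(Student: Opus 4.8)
The plan is to verify the three parts of condition $\Condition$ for $Q$ directly from the hypotheses $\qd\subseteq Q\subseteq\qn$, and then to read off the ``in particular'' statement from Proposition~\ref{Decomposition}. Conditions (C0) and (C1) require little: the form $Q$ is closed by assumption, it is non-negative because $Q(u,u)=\qn(u,u)\ge 0$ for every $u\in D:=D(Q)$ (using $Q\subseteq\qn$), and $C_c(V)\subseteq D(\qd)\subseteq D$ (using $\qd\subseteq Q$). The whole content therefore lies in establishing (C2).

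For (C2) the key step I would isolate is the inclusion $D(\qn)\subseteq\widetilde{F}$, which will allow me to invoke the integration by parts \eqref{PI}. To prove it, fix $u\in D(\qn)$ and a vertex $x_0\in V$ and estimate
\[
\sum_{y\in V} b(x_0,y)\,|u(y)| \le |u(x_0)|\sum_{y\in V} b(x_0,y)+\sum_{y\in V} b(x_0,y)\,|u(y)-u(x_0)|.
\]
The first term is finite because $\sum_y b(x_0,y)<\infty$ by the standing assumption on $(b,c)$. For the second term I would split $b(x_0,y)=b(x_0,y)^{1/2}\cdot b(x_0,y)^{1/2}$ and apply Cauchy--Schwarz, bounding it by $\bigl(\sum_y b(x_0,y)\bigr)^{1/2}\bigl(\sum_y b(x_0,y)|u(x_0)-u(y)|^2\bigr)^{1/2}$; here the second factor is dominated by $(2\,\qn(u,u))^{1/2}<\infty$ since the whole double sum defining $\qn(u,u)$ controls the single slice at $x_0$. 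As $b$ is symmetric, this shows $u\in\widetilde{F}$, hence $D(\qn)\subseteq\widetilde{F}$.

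With this inclusion in hand, take $u\in D$ and $v\in C_c(V)$. Since $Q\subseteq\qn$ we have $u\in D(\qn)\subseteq\widetilde{F}$, so \eqref{PI} applies and yields both the absolute convergence of $\sum_{x\in V} u(x)\,\wl v(x)\,m(x)$ and the identity $\widetilde{Q}(u,v)=\sum_{x\in V} u(x)\overline{\wl v(x)}\,m(x)$. It then remains to identify $\widetilde{Q}(u,v)$ with $Q(u,v)$: the formula defining $\widetilde{Q}$ coincides with that defining $\qn$, so $\widetilde{Q}(u,v)=\qn(u,v)$ for $u,v\in D(\qn)$; and since both $v\in C_c(V)\subseteq D(\qd)\subseteq D$ and $u\in D$ lie in $D(Q)$ while $Q\subseteq\qn$, we obtain $Q(u,v)=\qn(u,v)=\widetilde{Q}(u,v)$. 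This is precisely (C2). Finally, the assertion that the associated selfadjoint operator $L$ is a restriction of $\wl$ is immediate from Proposition~\ref{Decomposition}(b).

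The only genuine difficulty is the inclusion $D(\qn)\subseteq\widetilde{F}$; everything else is bookkeeping with the definitions of $\subseteq$ and $\le$ for forms and a direct appeal to \eqref{PI}. I expect the Cauchy--Schwarz estimate to be the crux, because it is exactly the point where square-summability of the discrete gradient (finiteness of $\qn(u,u)$) is converted into the absolute summability $\sum_y b(x_0,y)|u(y)|<\infty$ needed to make $\wl$ and the integration by parts meaningful on $D(\qn)$.
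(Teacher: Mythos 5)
Your proposal is correct and follows essentially the same route as the paper: the paper also reduces everything to the key inclusion $D(\qn)\subseteq\widetilde{F}$, proved by exactly your triangle-inequality-plus-Cauchy--Schwarz estimate, and then invokes \eqref{PI} to get (C2), with (C0), (C1) and the ``in particular'' statement handled as bookkeeping. The only cosmetic difference is that the paper phrases the reduction as ``it suffices to show that $\qn$ satisfies $\Condition$'' rather than verifying the three conditions for general $Q$ with $\qd\subseteq Q\subseteq\qn$ as you do explicitly (and your constant $(2\,\qn(u,u))^{1/2}$ is in fact slightly more careful than the paper's).
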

\begin{proof}  It suffices to show that $\qn$ satisfies $\Condition$.  By \eqref{PI}, it suffices to show that $D (\qn)\subseteq \widetilde{F}$.
To see this, we let $w \in D(\qn)$ and $B_x := \sum_y b(x,y)<\infty $ for each $x\in V$ and calculate
\begin{eqnarray*}
\sum_{y\in V} b(x,y) |w(y)| &\leq & \sum_{y\in V} b(x,y) |w(x) - w(y)| + \sum_{y\in V} b(x,y) |w(x)|\\
&\leq & \left(\sum_{y\in V} b(x,y)\right)^{1/2} \left(\sum_{y\in V} b(x,y) |w(x) - w(y)|^2\right)^{1/2} + B_x |w(x)|\\
&\leq & B_x^{1/2} {\qn}^{1/2} (w,w) + B_x |w(x)|.
\end{eqnarray*}
This gives the desired finiteness.
\end{proof}

\bigskip

We now turn to a situation in which we can explicitly describe all  Laplacians satisfying $\Condition$.

\smallskip

Recall from \cite{KL1} that  for  graphs $(b,c)$ over $(V,m)$ the following two conditions are equivalent:
\begin{itemize}
\item $\wl C_c (V) \subseteq \ell^2 (V,m)$.
\item $b(x,\cdot) / m(\cdot) \in \ell^2 (V,m)$ for all $x\in V$.
\end{itemize}
A graph satisfying one (and then both) of  these conditions will be said to satisfy the  \textit{finiteness condition (FC)}.

For such graphs, we  can define the \textit{minimal operator} $L_c$ to be the restriction of $\wl$ to $C_c (V)$ and the \textit{maximal  operator}  $L_M$ to be restriction of $\wl$ to
$$D (L_M):= \{u \in \ell^2(V,m):  \wl u\in \ell^2 (V,m)\}.$$
In this situation, the following consequence of \eqref{PI} holds (see \cite{KL1} for details):

\begin{prop}\label{maximaloperator} Let $(V,m)$ be a discrete measure space and $(b,c)$ a graph over $(V,m)$ satisfying $\mathrm{(FC)}$. Then, $L_M$ is the adjoint of $L_c$ and, in particular, the set of selfadjoint restrictions of $\wl$ is exactly the set of selfadjoint extensions of $L_c$.
\end{prop}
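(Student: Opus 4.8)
The plan is to establish the two claims of Proposition~\ref{maximaloperator} in sequence: first that $L_M = L_c^\ast$, and then that the selfadjoint restrictions of $\wl$ coincide with the selfadjoint extensions of $L_c$. I would begin by recalling the definition of the adjoint. A function $u \in \ell^2(V,m)$ belongs to $D(L_c^\ast)$ precisely when there is some $w \in \ell^2(V,m)$ with $\langle u, L_c v\rangle = \langle w, v\rangle$ for all $v \in C_c(V)$, and then $L_c^\ast u = w$. Since $L_c$ is the restriction of $\wl$ to $C_c(V)$, we have $L_c v = \wl v$ for $v \in C_c(V)$, so the defining identity reads $\sum_{x\in V} u(x)\overline{\wl v(x)}\, m(x) = \sum_{x\in V} w(x)\overline{v(x)}\, m(x)$.

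The key technical input is the integration by parts identity \eqref{PI} together with Theorem~\ref{regular}, which asserts $\widetilde{F} = \widetilde{F}^\ast$. First I would show $D(L_M) \subseteq D(L_c^\ast)$ with $L_c^\ast = \wl$ on this set: if $u \in D(L_M)$, then $u \in \ell^2(V,m)$ and $\wl u \in \ell^2(V,m)$, so in particular $u \in \widetilde{F}$ (as $\wl u$ is defined), and \eqref{PI} gives $\langle u, \wl v\rangle = \langle \wl u, v\rangle$ for all $v \in C_c(V)$. Since $\wl u \in \ell^2(V,m)$, this exhibits $u \in D(L_c^\ast)$ with $L_c^\ast u = \wl u$. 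For the reverse inclusion $D(L_c^\ast)\subseteq D(L_M)$, suppose $u \in D(L_c^\ast)$ with $L_c^\ast u = w \in \ell^2(V,m)$. The defining relation says $\sum_x u(x)\overline{\wl v(x)}\,m(x) = \sum_x w(x)\overline{v(x)}\,m(x)$ for all $v\in C_c(V)$; in particular the left-hand sum converges absolutely for each such $v$, so $u \in \widetilde{F}^\ast$. By Theorem~\ref{regular}, $u \in \widetilde{F}$, and then \eqref{PI} applies to give $\langle \wl u, v\rangle = \langle u, \wl v\rangle = \langle w, v\rangle$ for all $v \in C_c(V)$. Since $C_c(V)$ is dense in $\ell^2(V,m)$, this forces $\wl u = w \in \ell^2(V,m)$, so $u \in D(L_M)$ with $L_M u = w = \wl u$. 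This establishes $L_M = L_c^\ast$.

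The second assertion then follows from abstract operator theory combined with the regularity result. The selfadjoint extensions of $L_c$ are, by definition, selfadjoint operators $L$ with $L_c \subseteq L$. Because $L$ is selfadjoint it equals its own adjoint, and taking adjoints of $L_c \subseteq L$ reverses the inclusion to give $L = L^\ast \subseteq L_c^\ast = L_M$; thus every selfadjoint extension of $L_c$ is a restriction of $L_M$, hence of $\wl$ (as $L_M$ acts as $\wl$). Conversely, any selfadjoint restriction $L$ of $\wl$ that lies in $\ell^2(V,m)$ necessarily satisfies $L = \wl$ on $C_c(V)$ once one knows $C_c(V) \subseteq D(L)$; here the point is that a selfadjoint restriction of $\wl$ automatically contains $C_c(V)$ in its domain, which follows since $L$ selfadjoint implies $L = L^\ast \supseteq L_c^{\ast\ast} = \overline{L_c}$, and $C_c(V) \subseteq D(\overline{L_c})$. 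Hence such $L$ extends $L_c$.

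The main obstacle, and the step deserving the most care, is the reverse inclusion $D(L_c^\ast) \subseteq D(L_M)$, because this is exactly where the nontrivial content lives: a priori an element of $D(L_c^\ast)$ is only known to pair well against test functions, which a posteriori must be promoted to the pointwise statement that $\wl u$ is a genuine $\ell^2$ function equal to $w$. This promotion is precisely what Theorem~\ref{regular} supplies, by upgrading membership in the weak domain $\widetilde{F}^\ast$ to membership in $\widetilde{F}$; without it one could not legitimately apply \eqref{PI} to conclude $\wl u = w$. The remaining identifications of selfadjoint extensions with selfadjoint restrictions are then routine consequences of the relation $L_M = L_c^\ast$ and the symmetry of $L_c$ (so that $\overline{L_c} = L_c^{\ast\ast} \subseteq L_c^\ast = L_M$).
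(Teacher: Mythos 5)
Your proof is correct and follows exactly the route the paper intends: the paper states Proposition~\ref{maximaloperator} as a consequence of \eqref{PI} and defers the details to \cite{KL1}, and your argument fills those details in with the paper's own tools --- \eqref{PI} gives $L_M \subseteq L_c^\ast$, Theorem~\ref{regular} upgrades membership in $\widetilde{F}^\ast$ to membership in $\widetilde{F}$ for the reverse inclusion $L_c^\ast \subseteq L_M$, and standard adjoint calculus ($L \subseteq L_M = L_c^\ast$ implies $L = L^\ast \supseteq L_c^{\ast\ast} \supseteq L_c$) handles the identification of selfadjoint restrictions with selfadjoint extensions. One cosmetic remark: at the end of the second inclusion, rather than invoking density of $C_c(V)$ in $\ell^2(V,m)$ (which is slightly delicate since one does not yet know $\wl u \in \ell^2(V,m)$), simply test against $v = \delta_x$ to get $\wl u(x) = w(x)$ pointwise.
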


A special instance of graphs satisfying $\mathrm{(FC)}$ are locally finite graphs. Here,
a graph $(b,c)$ over $(V,m)$ is called \textit{locally finite} if, for any  $x\in V$, the set
$$ \{y\in V : b(x,y) >0\}$$
is finite. In this case, the previous proposition can be strengthened and it follows that $\widetilde{F} $ is equal to $C(V)$, $\wl$ maps $C_c (V)$ into itself and,  by \eqref{PI}, $\wl$  can easily be seen to be the adjoint of the restriction $L_c$  with respect to the dual pairing $C(V)\times C_c (V) \longrightarrow \CC$, $(u,v)\mapsto \sum_x u(x) v(x) m(x).$

\bigskip

Our characterization of all Laplacians satisfying $\Condition$ on graphs for which $\mathrm{(FC)}$ holds now follows:

\begin{theorem}\label{locally-finite} Let $(V,m)$ be a discrete measure space and $(b,c)$ a graph over $(V,m)$ satisfying $\mathrm{(FC)}$.  Let $L$ be a non-negative selfadjoint operator on $\ell^2 (V,m)$.  Then, the following assertions are equivalent:
\begin{itemize}
\item[(i)] $L$ and its associated form $Q$ satisfy $\Condition$.
\item[(ii)] $L$ is a restriction of $\wl$.
\end{itemize}
\end{theorem}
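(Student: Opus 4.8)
The plan is to treat the two implications separately; the real content lies entirely in $(ii)\Rightarrow(i)$, since $(i)\Rightarrow(ii)$ is already available from the general theory. Indeed, if $L$ and its associated form $Q$ satisfy $\Condition$, then Proposition~\ref{Decomposition}(b) asserts directly that $L$ is a restriction of $\wl$, which is precisely $(ii)$. So I would dispose of this direction in a single line.

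For $(ii)\Rightarrow(i)$ I would start with a non-negative selfadjoint restriction $L$ of $\wl$ and let $Q$ be its associated form, so that $D(Q)=D(L^{1/2})$ and $Q(u,v)=\langle L^{1/2}u,L^{1/2}v\rangle$. Condition (C0) is then automatic: $Q$ is non-negative because $L\ge 0$ and closed by the standard bijection between non-negative selfadjoint operators and closed non-negative forms. The decisive input for (C1) is Proposition~\ref{maximaloperator}, which under $\mathrm{(FC)}$ identifies the selfadjoint restrictions of $\wl$ with the selfadjoint extensions of the minimal operator $L_c$. Thus $L_c\subseteq L$, whence $C_c(V)=D(L_c)\subseteq D(L)\subseteq D(L^{1/2})=D(Q)$ and, moreover, $Lv=\wl v$ for every $v\in C_c(V)$.

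It remains to establish (C2). For $u\in D(Q)$ and $v\in C_c(V)\subseteq D(L)$ the form--operator identity $Q(u,v)=\langle u,Lv\rangle$ holds: using that $L^{1/2}v$ again lies in $D(L^{1/2})$ (as $v\in D(L)=D((L^{1/2})^2)$) and the self-adjointness of $L^{1/2}$, one moves $L^{1/2}$ across the inner product to obtain $\langle L^{1/2}u,L^{1/2}v\rangle=\langle u,L^{1/2}L^{1/2}v\rangle=\langle u,Lv\rangle$. Since $L$ restricts $\wl$ and $v\in C_c(V)$, we have $Lv=\wl v$, so that $Q(u,v)=\langle u,\wl v\rangle=\sum_{x\in V}u(x)\overline{\wl v(x)}\,m(x)$. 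Absolute convergence of this sum is immediate from the Cauchy--Schwarz inequality together with $\mathrm{(FC)}$: the latter gives $\wl v\in\ell^2(V,m)$ for $v\in C_c(V)$, while $u\in\ell^2(V,m)$. This verifies (C2) and finishes the proof.

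I do not anticipate a genuine obstacle: the theorem is essentially an assembly of Proposition~\ref{maximaloperator} (to locate $C_c(V)$ inside $D(L)$ and to read off $Lv=\wl v$ there) together with the basic correspondence between forms and non-negative selfadjoint operators. The one step deserving a little care is the identity $Q(u,v)=\langle u,Lv\rangle$ for $v\in D(L)$, which must invoke $L^{1/2}v\in D(L^{1/2})$; and the role of $\mathrm{(FC)}$, which is exactly what guarantees the absolute convergence required in (C2) and cannot be dispensed with for non-locally-finite graphs.
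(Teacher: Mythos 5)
Your proof is correct and takes essentially the same approach as the paper: the implication (i)$\Rightarrow$(ii) via Proposition~\ref{Decomposition}, and (ii)$\Rightarrow$(i) via Proposition~\ref{maximaloperator} combined with the standard correspondence between non-negative selfadjoint operators and closed forms. Your explicit verification of (C2) --- moving $L^{1/2}$ across the inner product to get $Q(u,v)=\langle u,Lv\rangle=\langle u,\wl v\rangle$ and obtaining absolute convergence from Cauchy--Schwarz and $\mathrm{(FC)}$ --- is precisely the detail the paper compresses into the phrase ``a simple consequence of Proposition~\ref{maximaloperator} and \eqref{PI}.''
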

\begin{proof} The implication (i)$\Longrightarrow$(ii)  follows from Proposition~\ref{Decomposition} (and does not require (FC)). It remains to show the implication (ii)$\Longrightarrow$(i). This is a simple consequence of Proposition~\ref{maximaloperator} and \eqref{PI}.
\end{proof}

\smallskip


\section{The forms $\qd$ and $\qn$} \label{Forms}
 In this section we study how $\qd$ and $\qn$ differ from each other. The difference will turn out to be essentially given by solutions of
 $$ (\wl +1) u =0$$
 belonging to $D(\qn)$.  This will allow us to abstractly characterize when $\qd$ and $\qn$ agree.   We then turn to a more geometric description of this difference suggested by recent results of  \cite{TH,dVTHT}.

\bigskip

\begin{lemma} \label{lemma-one} \label{firstlemma} Let $(V,m)$ be a discrete measure space and $(b,c)$ a graph over $(V,m)$.  If $\qn\neq \qd$, then there exists a non-trivial, non-negative solution to $(\wl + 1) u =0$ in $D(\qn) \cap \ell^\infty (V)$.
\end{lemma}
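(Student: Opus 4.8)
The plan is to exploit the orthogonal decomposition from Theorem~\ref{general} applied to the Neumann form $Q=\qn$, which satisfies $\Condition$ by Proposition~\ref{PI-prop}. Since $\qd\neq\qn$ and $\qd\subseteq\qn$, the space $D(\qd)$ is a \emph{proper} closed subspace of $(D(\qn),\langle\cdot,\cdot\rangle_{\qn})$, so by Theorem~\ref{general} the orthogonal complement $\mathcal{H}^{(\qn)}=\{u\in D(\qn):(\wl+1)u=0\}$ is non-trivial. This already produces a non-trivial solution to $(\wl+1)u=0$ in $D(\qn)$; the remaining work is to arrange that this solution can be taken to be real, non-negative, and bounded.

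First I would pick any non-zero $u\in\mathcal{H}^{(\qn)}$. Since $\wl$ commutes with complex conjugation and $b,c,m$ are real, both $\mathrm{Re}\,u$ and $\mathrm{Im}\,u$ lie in $D(\qn)$ (the form domain is invariant under conjugation because $\qn$ is defined by real weights) and solve $(\wl+1)u=0$; at least one of them is non-trivial, so I may assume $u$ is real-valued. Next I would reduce to a non-negative solution. The key tool is that $\qn$ is a Dirichlet form, so it is compatible with normal contractions, and in particular $|u|\in D(\qn)$ with $\qn(|u|,|u|)\le\qn(u,u)$. One then verifies, using the maximum-principle flavor of the equation $(\wl+1)u=0$ together with the integration-by-parts identity \eqref{PI}, that $|u|$ (or a suitable non-negative component) is again a $1$-harmonic function; concretely, testing $(\wl+1)u=0$ against $u$ via \eqref{PI} gives $\qn(u,u)+\langle u,u\rangle=0$ on the compactly-supported level and, by the decomposition, the non-negative function $|u|$ remains orthogonal to $D(\qd)$, hence lies in $\mathcal{H}^{(\qn)}$.

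For boundedness, the plan is to replace $u$ by a normal contraction of itself: applying the contraction $C(t)=\min(t,1)$ (or $C=\max(\min(t,1),-1)$ composed appropriately) yields $\tilde u\in D(\qn)\cap\ell^\infty(V)$ with $\qn(\tilde u,\tilde u)\le\qn(u,u)$, and one checks that $\tilde u$ remains orthogonal to $D(\qd)$ and hence solves $(\wl+1)\tilde u=0$. Since any non-trivial element of $\mathcal{H}^{(\qn)}$ is automatically bounded once one knows it lies in $\ell^2$ and satisfies the pointwise equation $\wl u=-u$ — indeed the equation forces a maximum principle ruling out unbounded growth of a non-negative solution — combining the conjugation, the positivity, and the truncation steps produces the desired non-trivial, non-negative $u\in D(\qn)\cap\ell^\infty(V)$.

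The main obstacle I anticipate is the positivity step: showing that passing to $|u|$ (or truncating) does not destroy the property of being \emph{exactly} $1$-harmonic, i.e.\ that the inequality $\qn(Cu,Cu)\le\qn(u,u)$ from the Dirichlet-form property can be upgraded to preservation of orthogonality to $D(\qd)$. The clean way around this is to argue structurally rather than by direct computation: since $\mathcal{H}^{(\qn)}$ is the orthogonal complement of $D(\qd)$ and the semigroup $e^{-t\nl}$ is positivity preserving, the non-negative part of a solution stays in the correct space, and one invokes the characterization of $\mathcal{H}^{(\qn)}$ via weak solutions from the Remark after the definition of harmonic functions to check the pointwise equation survives. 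I would present the positivity and boundedness reductions as the heart of the proof and treat the conjugation reduction as routine.
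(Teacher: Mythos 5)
Your opening step is correct and is exactly how the paper begins Section~\ref{Forms}: by Proposition~\ref{PI-prop} and Theorem~\ref{general}, $D(\qn)=D(\qd)\oplus\mathcal{H}^{(\qn)}$ orthogonally in $\langle\cdot,\cdot\rangle_{\qn}$, so $\qn\neq\qd$ yields a non-trivial $u\in D(\qn)$ with $(\wl+1)u=0$, and the reduction to real-valued $u$ is routine. The gap is in what you yourself call the heart of the proof: the positivity and boundedness reductions. The Dirichlet property gives $\qn(Cu,Cu)\le\qn(u,u)$ for a normal contraction $C$, but it does not preserve orthogonality to $D(\qd)$, i.e., $\mathcal{H}^{(\qn)}$ is not invariant under $u\mapsto|u|$ or under truncation. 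Pointwise this is visible directly: if $(\wl+1)u=0$, then $|u|$ satisfies only $(\wl+1)|u|\le 0$ (strictly, wherever $u$ changes sign across an edge), so it is a subsolution, not a solution; and if $u\ge 0$ solves $(\wl+1)u=0$ and $u(x)>1$ at some $x$, then
\[
(\wl+1)(u\wedge 1)(x)=\frac{1}{m(x)}\sum_{y\in V}b(x,y)\bigl(1-(u\wedge 1)(y)\bigr)+\frac{c(x)}{m(x)}+1\ \ge\ 1>0,
\]
so truncation destroys the equation at exactly the points where it does anything. Invoking positivity preservation of $e^{-t\nl}$ does not repair this: that property concerns images of non-negative functions and says nothing about the positive part of a $1$-harmonic function being $1$-harmonic. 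Your fallback claim that elements of $\mathcal{H}^{(\qn)}$ are automatically bounded is also unsupported: the maximum principle of Section~\ref{Maximum} applies only when the maximum is attained, and when $\inf_x m(x)=0$ an $\ell^2(V,m)$ function need not be bounded. Finally, the identity $\qn(u,u)+\langle u,u\rangle=0$ you want to extract from \eqref{PI} is precisely the formal trap the paper warns against in the Remark following Theorem~\ref{char-Neumann}; \eqref{PI} requires one argument to have finite support.

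The paper sidesteps all of this by constructing the solution in a form that is manifestly bounded and non-negative: since $\qn\neq\qd$ forces $(\nl+1)^{-1}\neq(\dl+1)^{-1}$ and the $\delta_x$ are total, one can pick $x$ with $u:=\bigl((\nl+1)^{-1}-(\dl+1)^{-1}\bigr)\delta_x\neq 0$. Then $(\wl+1)u=\delta_x-\delta_x=0$ because both operators are restrictions of $\wl$; $u\in\ell^\infty(V)$ because Markovian resolvents are contractions on $\ell^\infty(V)$; $u\in D(\qn)$ because both resolvents map into $D(\qn)$; and $u\ge 0$ because $(\dl+1)^{-1}\delta_x$ is the \emph{smallest} non-negative solution of $(\wl+1)v=\delta_x$ (Theorem~11 of \cite{KL1}), while $(\nl+1)^{-1}\delta_x$ is another non-negative solution. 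This domination/minimality property of the Dirichlet resolvent is the essential input that your contraction argument is missing; any repair of your approach would have to import it (or an equivalent comparison of the two resolvents) in some form.
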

\begin{proof} By $\qn \neq \qd$ we must have $\nl\neq \dl$ and then $(\nl +1)^{-1} \neq (\dl +1)^{-1}$.  As the vectors $\delta_x,$ $x\in V$, are total in $\ell^2 (V,m)$, there then exists an $x\in V$ such that
$$u:= ( (\nl +1)^{-1} - (\dl +1)^{-1}) \delta_x \neq 0,$$
where $\delta_x$ is the function in $\ell^2 (V,m)$ which vanishes everywhere except at $x$ where it is $1$.  As $\qn$ and $\qd$ are Dirichlet forms, both resolvents are contractions on $\ell^\infty (V)$ and the boundedness of $u$ follows.  Thus, $u$ belongs to $\ell^2 (V,m)\cap \ell^\infty (V)$. Moreover, $u$ belongs to $ D(\qn)$ as  $(\nl +1)^{-1}$ maps into $D(\nl)\subseteq D(\qn)$ and $(\dl +1)^{-1} $ maps into $D(\dl)\subseteq D(\qd) \subseteq D(\qn)$.  As both $\nl$ and $\dl$ are restrictions of $\wl$, we obtain that $u$ solves
$$ (\wl + 1) u =0.$$
Non-negativity of $u$ follows as $(\dl +1)^{-1} \delta_x $ is the smallest non-negative solution of $(\wl + 1) v = \delta_x$ by Theorem 11 of \cite{KL1}.
\end{proof}

By Proposition~\ref{PI-prop}, the form $\qn$ satisfies $\Condition$.  Thus, we can now specialize Corollary~\ref{general-coro} to obtain the following theorem on solving $(\wl +1) u =0$ in $D(\qn)$:

\begin{theorem}\label{main-qd-qn}  Let $(V,m)$ be a discrete measure space, $(b,c)$ a graph over $(V,m)$ and $\mathcal{B}^{(N)}:=D(\qn)/ D (\qd)$. Then, for each $v \in \mathcal{B}^{(N)}$  there exists a unique $w = w_v \in D(\qn)$ with
\begin{itemize}
\item $(\wl + 1) w =0$,
\item $[w] = v$.
\end{itemize}
\end{theorem}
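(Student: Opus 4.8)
The plan is to recognize the statement as the special case $Q=\qn$ of Corollary~\ref{general-coro}, so that essentially all of the content is already encoded in the abstract decomposition theorem of this section. First I would check that the hypotheses of that corollary are met for the Neumann form. This is precisely Proposition~\ref{PI-prop}: applying it to the trivial chain $\qd \subseteq \qn \subseteq \qn$ shows that $\qn$ is a closed form satisfying condition $\Condition$ with respect to $(b,c)$. Hence Corollary~\ref{general-coro} applies with $Q=\qn$ and domain $D = D(\qn)$.

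Next I would match up the notation. With $Q=\qn$, the quotient space appearing in Corollary~\ref{general-coro} is $\mathcal{B}(Q) = D(\qn)/D(\qd)$, which is exactly $\mathcal{B}^{(N)}$, and the space of $1$-$Q$-harmonic functions is $\mathcal{H}^{(\qn)} = \{u \in D(\qn) : (\wl + 1) u = 0\}$. The corollary then yields directly that, for each class $v \in \mathcal{B}^{(N)}$, there is a unique $w = w_v \in D(\qn)$ satisfying both $(\wl + 1) w_v = 0$ and $[w_v] = v$. This is precisely the assertion of the theorem, so no further work is needed.

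If I instead wanted a self-contained argument, I would invoke Theorem~\ref{general} directly. Since $\qn$ satisfies $\Condition$, that theorem gives the orthogonal splitting $D(\qn) = D(\qd) \oplus \mathcal{H}^{(\qn)}$ of the Hilbert space $(D(\qn), \langle\cdot,\cdot\rangle_{\qn})$. Given $v \in \mathcal{B}^{(N)}$, I would choose any representative $u$ with $[u] = v$ and decompose $u = u_0 + w$ with $u_0 \in D(\qd)$ and $w \in \mathcal{H}^{(\qn)}$; then $[w] = [u] = v$ and $(\wl + 1) w = 0$, which settles existence. For uniqueness, if $w$ and $w'$ both satisfy the two conditions, then $w - w' \in D(\qd) \cap \mathcal{H}^{(\qn)} = \{0\}$ by the directness of the sum, so $w = w'$.

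The point I would emphasize is that there is no genuine obstacle here. All of the analytic substance has already been dealt with upstream: the integration-by-parts identity \eqref{PI}, the regularity statement of Theorem~\ref{regular} (which lets one pass freely between strong and weak solutions of $(\wl + 1) u = 0$), and the orthogonal decomposition of Theorem~\ref{general}. The only thing that must be checked is that the Neumann form fits into the abstract framework, and that is supplied by Proposition~\ref{PI-prop}. Thus the proof reduces to a one-line specialization, and the main interest of the theorem is interpretive — it identifies the quotient $\mathcal{B}^{(N)}$ as a space of boundary values governing how $\qn$ extends $\qd$.
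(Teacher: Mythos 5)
Your proposal is correct and matches the paper's own proof exactly: the paper likewise notes that $\qn$ satisfies $\Condition$ by Proposition~\ref{PI-prop} and then obtains the theorem as an immediate specialization of Corollary~\ref{general-coro}. Your optional self-contained argument via the orthogonal decomposition of Theorem~\ref{general} is just an unwinding of that same corollary, so there is no substantive difference.
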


As a corollary we obtain the following characterization of $\qn \neq \qd$.

\begin{coro} \label{char-qd-qn} (Characterization of $\qn \neq \qd$) Let $(V,m)$ be a discrete measure space, $(b,c)$ a graph over $(V,m)$ and $\mathcal{B}^{(N)}= D(\qn)/ D (\qd)$.   Then, the following assertions  are equivalent:
\begin{itemize}
\item[(i)] $\mathcal{B}^{(N)} \neq \{0\}$, i.e., $\qn \neq  \qd$.
\item[(ii)]  There exists a non-trivial solution of $(\wl +1) u =0$ in $D(\qn)$.
\item[(iii)] There exists a non-trivial solution of $(\wl +1) u =0$ in $D(\qn) \cap \ell^\infty(V)$.
\end{itemize}
\end{coro}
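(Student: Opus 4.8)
The plan is to derive all three equivalences cheaply by assembling the two preceding results of the section, Theorem~\ref{main-qd-qn} and Lemma~\ref{firstlemma}, into a short cyclic chain of implications (i)$\Rightarrow$(iii)$\Rightarrow$(ii)$\Rightarrow$(i). No new computation is needed; the corollary is essentially bookkeeping once those two results are in place.

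First I would dispose of the two soft implications. The step (iii)$\Rightarrow$(ii) is trivial, since $D(\qn)\cap\ell^\infty(V)\subseteq D(\qn)$, so any function witnessing (iii) already witnesses (ii). For (ii)$\Rightarrow$(i) I would invoke the structural statement of Theorem~\ref{main-qd-qn}, equivalently the unitary bijection of Corollary~\ref{general-coro} specialized to $Q=\qn$ (legitimate because $\qn$ satisfies $\Condition$ by Proposition~\ref{PI-prop}). That bijection identifies $\mathcal{H}^{(\qn)}=\{u\in D(\qn):(\wl+1)u=0\}$ with the quotient $\mathcal{B}^{(N)}=D(\qn)/D(\qd)$; hence a non-trivial solution of $(\wl+1)u=0$ in $D(\qn)$ forces $\mathcal{H}^{(\qn)}\neq\{0\}$ and therefore $\mathcal{B}^{(N)}\neq\{0\}$, which is (i). Concretely, by Theorem~\ref{general} such a $u$ is $\langle\cdot,\cdot\rangle_{\qn}$-orthogonal to $D(\qd)$, so $u\notin D(\qd)$ and $[u]\neq 0$ in $\mathcal{B}^{(N)}$.

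The only implication carrying genuine content is (i)$\Rightarrow$(iii), and here I would simply quote Lemma~\ref{firstlemma}, which already produces, from $\qn\neq\qd$, a non-trivial (indeed non-negative and bounded) solution of $(\wl+1)u=0$ lying in $D(\qn)\cap\ell^\infty(V)$. Chaining (i)$\Rightarrow$(iii)$\Rightarrow$(ii)$\Rightarrow$(i) then closes the cycle and yields the equivalence of all three assertions.

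The main obstacle is not located in the corollary itself but upstream, in Lemma~\ref{firstlemma}, which is the one place where \emph{boundedness} of the solution is secured: one sets $u=((\nl+1)^{-1}-(\dl+1)^{-1})\delta_x$ for a suitable $x\in V$ and uses that, $\qn$ and $\qd$ being Dirichlet forms, both resolvents are $\ell^\infty$-contractions. Thus the substantive work---upgrading the abstract inequality $\qd\subsetneq\qn$ to an explicit bounded harmonic function, and reading off the quotient $\mathcal{B}^{(N)}$ as the space of such solutions---has already been carried out in Theorem~\ref{main-qd-qn} and Lemma~\ref{firstlemma}, and the corollary only records their combination.
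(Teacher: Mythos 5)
Your proposal is correct and follows essentially the same route as the paper: the trivial inclusion for (iii)$\Rightarrow$(ii), the identification $\mathcal{H}^{(\qn)}\cong\mathcal{B}^{(N)}$ from Theorem~\ref{main-qd-qn} (via the orthogonality in Theorem~\ref{general}) for (ii)$\Rightarrow$(i), and Lemma~\ref{firstlemma} for (i)$\Rightarrow$(iii). Your observation that the only substantive content lies upstream in Lemma~\ref{firstlemma} matches the structure of the paper's argument exactly.
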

\begin{proof}
The implication (iii)$\Longrightarrow $(ii) is clear. The  implication  (ii)$\Longrightarrow $(i) (as well as the reverse direction)  follows from the previous result.  Note that the nontriviality of the solution $u$ in (ii) gives $[u]\neq 0$ by the uniqueness part of Theorem \ref{main-qd-qn}.
The implication (i)$\Longrightarrow$(iii) follows from  Lemma~\ref{lemma-one}.
\end{proof}

\medskip

A different angle to nontrivial solvability of $(\wl + 1) u =0$ is provided by the geometric context developed in \cite{TH,dVTHT} which we now recall.  The setting of \cite{TH,dVTHT} is concerned with locally finite graphs only.  However, the part that we need here works  in our situation with essentially the same proofs. For the convenience of the reader, we shortly indicate the corresponding  proofs.   For further discussion we refer to the cited works.

Assume that $(b,c)$ over $(V,m)$ is connected.  The length of a path $\gamma = (x_1,\ldots, x_n)$ is defined by
$$L(\gamma) :=\sum_{j=1}^{n-1} \frac{1}{b (x_j, x_{j+1})^{1/2}}.$$
Then, $d : V\times V\longrightarrow [0,\infty)$ given by
$$d(x,y) = \inf\{ L(\gamma) : \mbox{$\gamma$ is a path connecting $x$ and $y$}\}$$
provides a metric on $V$. Let $\widehat{V}$ be the metric completion of $V$ with respect to $d$ and let $\widehat{d}$ be the extension of $d$ to $\widehat{V}$.  Note that the completion $\widehat{V}$ may or may not agree with $V$.

The relevance of this metric comes from the fact that
\begin{equation} \label{rel}
|u(x) - u(y)|\leq \qn (u)^{1/2} d (x,y)
\end{equation}
for any $u\in D (\qn)$ where $\qn(u):= \qn(u,u)$. Indeed, for any path $\gamma = (x_1,\ldots, x_n)$ connecting $x$ and $y$, one easily sees by the Cauchy-Schwarz inequality and the subadditivity of the square root that
$|u(x) - u(y)|\leq \qn (u)^{1/2} L(\gamma)$. This gives the desired result.

\smallskip

Equation \eqref{rel}  allows one to extend any $u\in D (\qn)$ to a Lipschitz function $\widehat{u}$ on $\widehat{V}$.  We define $u_\infty$ to be the restriction of $\widehat{u}$ to $V_\infty :=\widehat{V}\setminus V$ if $\widehat{V}\setminus V\neq \emptyset$ and we define $u_\infty$ by $0$, otherwise. From the construction and some simple arguments we obtain a continuity property of the map $u\mapsto u_\infty$.  As this is not included in \cite{dVTHT} we discuss it explicitly  as follows:

\begin{lemma}\label{continuity-infty} Let $C(V_\infty)$ be the space of continuous functions on the metric space $\widehat{V}$.  Then, the map
$$ D(\qn)\longrightarrow C (V_\infty ), u\mapsto u_\infty, $$
is continuous when the set on the right hand side is given the topology of locally uniform convergence.
\end{lemma}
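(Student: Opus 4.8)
The plan is to show continuity of the map $u \mapsto u_\infty$ by establishing local uniform convergence directly from the Lipschitz estimate \eqref{rel}. The key idea is that the bound \eqref{rel} controls not only the oscillation of a single function $\widehat{u}$ but, by linearity, the oscillation of differences $\widehat{u} - \widehat{u}_n = \widehat{u - u_n}$, so that convergence in the form norm transfers to uniform control on the extension.

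First I would recall that for $u \in D(\qn)$ the extension $\widehat{u}$ to $\widehat{V}$ is Lipschitz with constant $\qn(u)^{1/2}$, that is, $|\widehat{u}(\xi) - \widehat{u}(\eta)| \leq \qn(u)^{1/2}\, \widehat{d}(\xi,\eta)$ for all $\xi,\eta \in \widehat{V}$, which follows from \eqref{rel} by density of $V$ in $\widehat{V}$ and continuity of $\widehat{u}$. Suppose now that $u_n \to u$ in the form norm $\langle \cdot,\cdot\rangle_{\qn}$; I must show $(u_n)_\infty \to u_\infty$ locally uniformly on $V_\infty$. Applying the Lipschitz bound to the difference $u - u_n$, I get
$$|\widehat{u}(\xi) - \widehat{u_n}(\xi) - (\widehat{u}(\eta) - \widehat{u_n}(\eta))| \leq \qn(u - u_n)^{1/2}\, \widehat{d}(\xi,\eta)$$
for all $\xi,\eta \in \widehat{V}$. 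The second step is to anchor the extensions at a fixed point of $V$: since $u_n \to u$ in $\ell^2(V,m)$, we have $u_n(x_0) \to u(x_0)$ for any fixed $x_0 \in V$ (convergence in $\ell^2$ implies pointwise convergence because $\delta_{x_0}/m(x_0)^{1/2}$ is a bounded functional), and $\widehat{u}$ agrees with $u$ on $V$. Combining the anchoring at $x_0$ with the Lipschitz bound for the difference gives, for any $\xi$ in a $\widehat{d}$-bounded set,
$$|\widehat{u}(\xi) - \widehat{u_n}(\xi)| \leq |u(x_0) - u_n(x_0)| + \qn(u - u_n)^{1/2}\, \widehat{d}(\xi, x_0),$$
and both terms on the right tend to $0$ uniformly over bounded sets as $n \to \infty$, since $\qn(u-u_n) \leq \langle u - u_n, u - u_n\rangle_{\qn} \to 0$. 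Restricting to $V_\infty$ then yields locally uniform convergence of $(u_n)_\infty$ to $u_\infty$, which is the desired continuity.

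The main technical point to verify carefully is the density argument needed to promote \eqref{rel}, stated for $x,y \in V$, to a genuine Lipschitz bound on all of $\widehat{V}$, and the handling of the anchoring point: I must confirm that $x_0 \in V \subseteq \widehat{V}$ lies within finite $\widehat{d}$-distance of every point of a given bounded neighborhood in $V_\infty$, which is automatic since "locally uniform" refers to $\widehat{d}$-bounded subsets of $V_\infty$. One should also note the degenerate case $V_\infty = \emptyset$, where $u_\infty \equiv 0$ by definition and continuity is trivial. I expect the only real obstacle to be bookkeeping: ensuring that the estimate is uniform over the relevant bounded set rather than merely pointwise, but this follows immediately because the Lipschitz constant $\qn(u-u_n)^{1/2}$ is independent of $\xi$ and multiplies a bounded distance.
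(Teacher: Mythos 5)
Your proof is correct and follows essentially the same route as the paper: apply the estimate \eqref{rel} to the difference $u - u_n$, anchor at a fixed vertex, and let the form norm control both terms. In fact, your explicit handling of the anchoring term $|u(x_0)-u_n(x_0)|$ via pointwise convergence in $\ell^2(V,m)$ rigorously justifies what the paper dispatches with a ``without loss of generality,'' and your remark on extending \eqref{rel} to $\widehat{V}$ by density makes explicit a step the paper leaves implicit.
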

\begin{proof} Let $(u_n)$ be a sequence in $D (\qn)$ converging to $u\in D(\qn)$ with respect to $\langle \cdot,\cdot\rangle_{Q^{(N)}}$. Assume, without loss of generality, that there exists an $o\in V$ with $u_n (o) = u(o)$ for all $n\in \N$. Then, from \eqref{rel} we obtain
$$ | (u - u_n) (p)| = | (u - u_n) (p) - (u- u_n ) (o)|  \leq  \qn (u - u_n)^{1/2} \widehat{d} (p,o). $$
This gives the desired statement.
\end{proof}

Now, here comes the connection between  non-trivial solutions to $(\wl + 1) u  =0$ and the  boundary values $u_\infty$ of the  functions $u$ in $\qn$. This is our version of Theorem~2.1 of \cite{dVTHT}.

\begin{theorem}   \label{thm21}  Let $(V,m)$ be a discrete measure space and $(b,c)$ a graph over $(V,m)$. Let $\mathcal{B}^{(N)} = D(\qn)/ D (\qd)$.   Then, the map
$$P : \mathcal{B}^{(N)}\longrightarrow  \{ u_\infty : u\in D(\qn)\}, \quad [u]\mapsto u_\infty,$$
is well-defined, linear, continuous and onto.  In particular, to each $f\in D(\qn)$ there exists $w\in D(\qn)$ with  $(\wl + 1) w =0 $ and  $w_\infty = f_\infty$.  Furthermore, if $\widehat{V}$ is compact, then $P$ is injective.
\end{theorem}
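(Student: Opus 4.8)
The plan is to dispatch the four assertions about $P$ — well-definedness, linearity, continuity, surjectivity — together and quickly, and then to concentrate on injectivity, which is the only substantive point. For well-definedness, since $u\mapsto u_\infty$ is linear, it suffices to show $u_\infty = 0$ whenever $u\in D(\qd)$. For $u\in C_c(V)$ this is direct: a point $p\in V_\infty$ is a limit of a sequence $(x_n)$ in $V$ that must leave every finite set (a $d$-Cauchy sequence confined to a finite set is eventually constant, hence would converge in $V$, contradicting $p\notin V$), so $u(x_n)=0$ eventually and $\widehat u(p)=0$. As $\qd$ and $\qn$ agree on $D(\qd)$ and $D(\qd)$ is the $\langle\cdot,\cdot\rangle_{\qn}$-closure of $C_c(V)$, Lemma~\ref{continuity-infty} then yields $u_\infty=0$ for all $u\in D(\qd)$ by passing to the locally uniform limit. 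Well-definedness follows, and linearity of $P$ is inherited from that of $u\mapsto u_\infty$ and of the quotient map. Continuity is immediate from Lemma~\ref{continuity-infty}: the continuous map $u\mapsto u_\infty$ on $D(\qn)$ factors as $P\circ\pi$ through the continuous open surjection $\pi:D(\qn)\to\mathcal{B}^{(N)}$, so $P$ is continuous. Surjectivity holds by the very definition of the target $\{u_\infty:u\in D(\qn)\}$, each element being $P([u])$. The displayed ``in particular'' statement then drops out by combining $P$ with Theorem~\ref{main-qd-qn}: given $f\in D(\qn)$, that theorem furnishes $w\in D(\qn)$ with $(\wl+1)w=0$ and $[w]=[f]$, whence $w_\infty=P([w])=P([f])=f_\infty$.

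The main obstacle is injectivity under compactness of $\widehat V$, and I would first reduce it to a statement about harmonic representatives. Since $\qn$ satisfies $\Condition$ (Proposition~\ref{PI-prop}), Theorem~\ref{general} gives the orthogonal decomposition $D(\qn)=D(\qd)\oplus\mathcal H$, where $\mathcal H=\{h\in D(\qn):(\wl+1)h=0\}$. Writing $u=u_0+h$ accordingly and using $(u_0)_\infty=0$ from the first paragraph, we get $u_\infty=h_\infty$; hence $P([u])=0$ forces $h_\infty=0$, and injectivity reduces to showing that $h\in\mathcal H$ with $h_\infty=0$ must vanish.

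For this I would invoke a maximum principle. Splitting into real and imaginary parts (each lies in $D(\qn)$, solves $(\wl+1)h=0$ because $\wl$ has real coefficients, and again has vanishing boundary trace), I may assume $h$ real. Its Lipschitz extension $\widehat h$ is continuous on the compact space $\widehat V$ and so attains a maximum; if this maximum were positive it could not be attained on $V_\infty$, where $\widehat h=0$, so it would be attained at some $x_0\in V$ with $h(x_0)\ge h(y)$ for all $y\in V$. Evaluating $(\wl+1)h(x_0)=0$ and multiplying by $m(x_0)>0$ gives
\[
\sum_{y} b(x_0,y)\bigl(h(x_0)-h(y)\bigr) + c(x_0)h(x_0) + m(x_0)h(x_0) = 0,
\]
in which every term on the left is non-negative and the last is strictly positive, a contradiction. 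Thus $\max\widehat h\le 0$, and applying the same argument to $-h$ gives $\min\widehat h\ge 0$, so $h\equiv 0$; injectivity follows. I expect the delicate points to be precisely those in this last step: verifying that the extremum is attained inside $V$ (using $h_\infty=0$ together with compactness of $\widehat V$) and the sign bookkeeping in the pointwise equation. The reduction to real, harmonic representatives via Theorem~\ref{general} is exactly what makes the maximum principle available.
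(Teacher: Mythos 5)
Your proof is correct and, for the parts the paper actually writes out, follows the same route: well-definedness via $u_\infty = 0$ on $C_c(V)$ combined with Lemma~\ref{continuity-infty}, linearity and surjectivity as trivialities, continuity again from Lemma~\ref{continuity-infty} via the quotient map, and the ``in particular'' statement from Theorem~\ref{main-qd-qn}. The one place you go beyond the paper is injectivity: the paper disposes of it with the single remark that it ``follows from a maximum principle as in \cite{dVTHT}'', whereas you supply a complete argument --- reducing via Proposition~\ref{PI-prop} and Theorem~\ref{general} to a real-valued $h\in D(\qn)$ with $(\wl+1)h=0$ and $h_\infty=0$, and then running the maximum principle: by compactness of $\widehat{V}$ the extension $\widehat{h}$ attains its maximum, a positive maximum cannot lie on $V_\infty$, and at a maximum $x_0\in V$ the identity
$$\sum_{y\in V} b(x_0,y)\bigl(h(x_0)-h(y)\bigr) + c(x_0)h(x_0) + m(x_0)h(x_0) = 0$$
is impossible because every summand is non-negative and the last is strictly positive. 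This argument is sound (the sum converges absolutely since $D(\qn)\subseteq \widetilde{F}$ by Proposition~\ref{PI-prop}, and your phrasing also covers the degenerate case $V_\infty=\emptyset$, where $\mathcal{H}^{(\qn)}$ is forced to vanish and injectivity is trivial); it is essentially the maximum principle of Section~\ref{Maximum} adapted to the boundary-value setting, so your write-up is more self-contained than the paper's at exactly this point.
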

\begin{proof} The second statement on the existence of $w$  follows from the first statement and Theorem~\ref{main-qd-qn}.  The last statement follows from a maximum principle as in \cite{dVTHT}. Thus, it suffices to show the first statement.  It is clear that $P$ is linear and onto (if it is well-defined).  Also, from the previous lemma, it is clear that it is continuous (if it is well-defined).  Thus, it remains to show that $P$ is well-defined.  Obviously, $u_\infty = 0$ for all $u \in C_c (V)$. Thus, by Lemma~ \ref{continuity-infty},  we obtain that $u_\infty = 0$ for all $u\in D(\qd)$.  Hence, $P$ is well-defined.
 \end{proof}

From this proposition, Corollary~\ref{char-qd-qn}, and Proposition \ref{esa-prop} we immediately infer the following corollary.

\begin{coro} Let $(V,m)$ be a discrete measure space and $(b,c)$ a graph over $(V,m)$. If there exists $f\in D(\qn)$ with $f_\infty \neq 0$, then $\qn \neq\qd$.  If, furthermore, \textup{(FC)} holds, i.e., $\wl C_c (V) \subset \ell^2 (V,m)$, then the restriction of $\wl$ to $C_c(V)$ is not essentially selfadjoint.
\end{coro}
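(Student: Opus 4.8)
The plan is to deduce both statements from results already established, reading the first off Theorem~\ref{thm21} and then feeding it into Proposition~\ref{esa-prop} for the second. Neither step requires new computation.

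For the first statement, I would use the observation made inside the proof of Theorem~\ref{thm21}, namely that $u_\infty = 0$ for every $u \in D(\qd)$. Hence, if $f \in D(\qn)$ satisfies $f_\infty \neq 0$, then $f$ cannot lie in $D(\qd)$, while it certainly lies in $D(\qn)$. Therefore $D(\qd)$ is a proper subspace of $D(\qn)$ and $\qn \neq \qd$. Equivalently, since $P$ is onto by Theorem~\ref{thm21} and its codomain contains the nonzero element $f_\infty$, one gets $\mathcal{B}^{(N)} \neq \{0\}$, which is $\qn \neq \qd$ by Corollary~\ref{char-qd-qn}.

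For the second statement, I would assume in addition that (FC) holds and argue by contradiction, supposing $L_c$, the restriction of $\wl$ to $C_c(V)$, to be essentially selfadjoint. By Proposition~\ref{PI-prop}, both $\nl$ and $\dl$ are selfadjoint restrictions of $\wl$; under (FC), Proposition~\ref{maximaloperator} identifies the selfadjoint restrictions of $\wl$ with the selfadjoint extensions of $L_c$, so $\nl$ and $\dl$ are both selfadjoint extensions of $L_c$. Essential selfadjointness then forces $\nl = \dl$, hence $\qn = \qd$, contradicting the first statement. Thus $L_c$ is not essentially selfadjoint. Alternatively, one may invoke Proposition~\ref{esa-prop} directly: essential selfadjointness of $L_c$ would make the unique selfadjoint extension the only Laplacian associated to $(b,c)$, which is incompatible with $\nl \neq \dl$.

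The deduction is short and presents no real obstacle; the only point demanding a little care is the bookkeeping in the second part, namely verifying that both $\nl$ and $\dl$ genuinely qualify as selfadjoint extensions of $L_c$, so that the notion of essential selfadjointness is actually applicable. This is exactly what Propositions~\ref{PI-prop} and~\ref{maximaloperator} supply once (FC) is in force.
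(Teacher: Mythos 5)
Your proposal is correct and matches the paper's intended argument: the paper derives this corollary ``immediately'' from Theorem~\ref{thm21}, Corollary~\ref{char-qd-qn}, and Proposition~\ref{esa-prop}, and your two steps (using the well-definedness of $P$, i.e.\ $u_\infty = 0$ on $D(\qd)$, to get $\qn \neq \qd$, then combining Propositions~\ref{PI-prop} and~\ref{maximaloperator} --- or invoking Proposition~\ref{esa-prop} directly --- to rule out essential selfadjointness) are exactly the details being left to the reader. The only cosmetic difference is that the paper's citation of Corollary~\ref{char-qd-qn} suggests passing through a nontrivial solution of $(\wl+1)u=0$ in $D(\qn)$, whereas your primary argument concludes $f \notin D(\qd)$ directly; both are valid and you note the other route as well.
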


\textbf{Remark.} (a) The statement on failure of essential selfadjointness in the  corollary is a generalization of Theorem~3.1 of \cite{dVTHT}. There, the statement is shown for locally finite graphs and $c\equiv 0$.   Our proof  provides a further piece of information  in that it shows that the  existence of $f\in D(\qn)$ with $f_\infty \neq 0$ implies that  $\qn \neq \qd$.

(b) One may wonder whether $\qn \neq \qd$ is, in fact, equivalent to existence of $f$ with $f_\infty\neq 0$. This, however, is not the case as can be seen by the example in Appendix~\ref{Counter}. In that situation, we have completeness of the graph (as this completeness does not depend  on $m$) and $\qn \neq \qd$.

\section{Characterizing Neumann  and Dirichlet Laplacians}\label{Neumann}
In our setting, it follows from Corollary~\ref{Maximal}, that the Dirichlet Laplacian is the largest operator satisfying $\Condition$.  This naturally raises the question whether a  corresponding characterization can be given  for the Neumann Laplacian. In this section, we show that this holds true  in the case of locally finite graphs.  More precisely, we study the set of all Markovian restrictions of $\wl$ and show that the Dirichlet Laplacian is the largest one and the Neumann Laplacian is the smallest one (Theorem \ref{char-Neumann}). These results (and their proofs) can be seen as analogues to results for the `usual' Laplacians (and diffusion-type operators) on sufficiently smooth subsets of Euclidean space as discussed in Section 3.3 of  \cite{Fuk}. As a corollary, we obtain a characterization of the agreement of $\qd$ and $\qn$ in terms of uniqueness of symmetric Markov processes associated to $\wl$ (Corollary \ref{char-markov-semigroup}).

\bigskip

We start with a definition.

\begin{definition} Let $(V,m)$ be a discrete measure space and $(b,c)$ a locally finite graph over $(V,m)$. Then, a non-negative selfadjoint restriction of $\wl$ is called Markovian if its associated form is a Dirichlet form. The set of all Markovian restrictions of $\wl$ is denoted by $\mathcal{E} = \mathcal{E} (b,c,m)$.
\end{definition}

\textbf{Remark.} If $(b,c)$ is locally finite, then, by Proposition~\ref{maximaloperator}, it follows that $L_M$ is the adjoint operator of $L_c$.  Therefore, any selfadjoint $L$ is a restriction of $\wl$ if and only if it is an extension of $L_c$ and, in this case,
$$L_c \subseteq L \subseteq  L_M$$
holds.  We can  therefore think of restrictions of $\wl$ as extensions of $L_c$  and this explains our notation  $\mathcal{E}$ for a set of restrictions.

\medskip

\begin{theorem} \label{char-Neumann}  Let $(V,m)$ be a discrete measure space and $(b,c)$ a locally finite graph over $(V,m)$. Then,
$$ \qn \leq Q \leq \qd$$
holds for any form $Q$ associated to a Markovian restriction $L$ of $\wl$.
\end{theorem}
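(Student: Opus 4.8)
The plan is to treat the two inequalities separately. The upper bound $Q\le\qd$ is essentially already available: since $(b,c)$ is locally finite it satisfies $\mathrm{(FC)}$, so by Theorem~\ref{locally-finite} the form $Q$ associated to the non-negative selfadjoint restriction $L$ of $\wl$ satisfies $\Condition$, and Corollary~\ref{Maximal} then yields $Q\le\qd$ at once. Thus all the work goes into the lower bound $\qn\le Q$, i.e.\ into showing $D(Q)\subseteq D(\qn)$ together with $\qn(u,u)\le Q(u,u)$ for every $u\in D(Q)$.

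I would first reduce both of these to the single inequality
\[
\widetilde{Q}(u,u)\;\le\;Q(u,u)\qquad\text{for all }u\in D(Q).
\]
Indeed, once this holds the right-hand side is finite, so $\widetilde{Q}(u,u)<\infty$; since $u\in D(Q)\subseteq\ell^2(V,m)$ this means exactly $u\in D(\qn)$, and then $\qn(u,u)=\widetilde{Q}(u,u)\le Q(u,u)$, which is precisely $\qn\le Q$. By Corollary~\ref{regular-coro} we already know $D(Q)\subseteq\widetilde{F}$ and $Q(u,v)=\widetilde{Q}(u,v)$ for $u\in D(Q)$ and $v\in C_c(V)$, so the whole theorem comes down to comparing the abstract form $Q$ with the concrete jump expression $\widetilde{Q}$ on the diagonal of $D(Q)$.

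The essential point, and the only place where the Markovian hypothesis enters, is that a Dirichlet form on the discrete space $(V,m)$ is automatically of pure jump (plus killing) type. Here $Q$ is a Dirichlet form whose restriction to $C_c(V)$ coincides with $\qd$, hence with $\widetilde{Q}$, by Proposition~\ref{Decomposition}(a). I would invoke the Beurling--Deny decomposition of $Q$ into a strongly local, a jump, and a killing part and argue that the strongly local part vanishes: since $V$ carries the discrete topology, every singleton $\{x\}$ is open, so any function is constant on a neighbourhood of a point; the defining property of strong locality then forces $Q^{(c)}(u,v)=0$ whenever $v$ is supported at a single point, and by linearity for all $v\in C_c(V)$, whence $Q^{(c)}=0$. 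Thus $Q$ is given on $D(Q)$ by a symmetric jump kernel $J\ge 0$ and a killing term $\kappa\ge 0$; evaluating on $\delta_x$ and on the pair $\delta_x,\delta_y$ and matching with $\widetilde{Q}$ identifies $J=b$ and $\kappa=c$. Therefore $Q(u,u)=\widetilde{Q}(u,u)$ for every $u\in D(Q)$, which is even stronger than the inequality required. Combining $\qn\le Q$ with $Q\le\qd$ then completes the proof.

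The main obstacle I anticipate is making this structural step rigorous. The Beurling--Deny representation is cleanest for \emph{regular} Dirichlet forms, whereas here $Q$ may be a genuinely non-regular extension of the regular form $\qd$ (by Theorem~\ref{general}, $D(Q)$ is strictly larger than $D(\qd)$ precisely when $\mathcal{H}^{(Q)}\neq\{0\}$). One must therefore either quote the representation in sufficient generality or supply a hands-on substitute. A convenient substitute is an approximation argument: the functional $\widetilde{Q}(\cdot,\cdot)$ is lower semicontinuous for pointwise convergence, since each summand $b(x,y)|u(x)-u(y)|^2$ and $c(x)|u(x)|^2$ is non-negative and continuous, so Fatou applies; and the Markovian property lets one reduce to bounded $u$ through the truncations $u_n=(-n\vee u)\wedge n$, which satisfy $Q(u_n,u_n)\le Q(u,u)$ and converge to $u$ both pointwise and in form norm. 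The delicate residual issue is then to control the boundary energy produced when a bounded $u$ is cut off to $C_c(V)$; it is exactly here, and not in Corollary~\ref{Maximal}, that Markovianity is indispensable, for a general form satisfying $\Condition$ the lower bound $\qn\le Q$ may fail.
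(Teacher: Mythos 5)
Your upper bound and your reduction are both fine: local finiteness gives $\mathrm{(FC)}$, so Theorem~\ref{locally-finite} and Corollary~\ref{Maximal} yield $Q\le \qd$ exactly as in the paper, and $\qn\le Q$ is indeed equivalent to $\widetilde{Q}(u,u)\le Q(u,u)$ for all $u\in D(Q)$. The gap is the structural step. The Beurling--Deny representation you invoke is a theorem about \emph{regular} Dirichlet forms, and the forms at issue here are precisely the non-regular ones: by Proposition~\ref{Decomposition}(a), $C_c(V)$ is dense in $(D(Q),\langle\cdot,\cdot\rangle_Q)$ if and only if $Q=\qd$, so whenever $Q\neq\qd$ the jump kernel and killing term read off from $C_c(V)$ do not determine $Q$ on all of $D(Q)$. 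Worse, your conclusion $Q(u,u)=\widetilde{Q}(u,u)$ for all $u\in D(Q)$ is simply false in general. On a locally finite graph admitting $f\in D(\qn)$ with $f_\infty\neq 0$ (the setting of Theorem~\ref{thm21} and \cite{dVTHT}), fix $p\in V_\infty$ and set $Q_\alpha(u,v):=\qn(u,v)+\alpha\, \widehat{u}(p)\overline{\widehat{v}(p)}$ on $D(\qn)$ with $\alpha>0$. The functional $u\mapsto \widehat{u}(p)$ is form-norm continuous by \eqref{rel}, so $Q_\alpha$ is closed; normal contractions satisfy $\widehat{Cu}=C\circ\widehat{u}$ and hence decrease $|\widehat{u}(p)|$, so $Q_\alpha$ is a Dirichlet form; and since $\widehat{v}(p)=0$ for $v\in C_c(V)$, testing against $C_c(V)$ and using \eqref{PI} shows the associated operator is a restriction of $\wl$. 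Thus $Q_\alpha$ is a Markovian restriction (the graph analogue of Robin conditions, cf.\ Section~3.3 of \cite{Fuk}), yet $Q_\alpha(u,u)>\qn(u,u)=\widetilde{Q}(u,u)$ whenever $\widehat{u}(p)\neq0$. So only the inequality $\qn\le Q$ can hold, and no identification of $Q$ with $\widetilde{Q}$ on $D(Q)$ can prove it.

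Your fallback approximation does not repair this: multiplying a bounded $u\in D(Q)$ by cutoffs is not a normal contraction applied to $u$, so Markovianity gives no bound on $Q$ along such a sequence, and if $u\notin D(\qd)$ there is no sequence in $C_c(V)$ converging to $u$ in form norm at all. The ``delicate residual issue'' you flag is therefore not residual; it is the entire content of the theorem. The paper handles it by a different route: Proposition~\ref{decomposition} (i.e., Theorem~\ref{general} applied to both $Q$ and $\qn$) reduces the claim to real-valued $u\in D(Q)$ with $(\wl+1)u=0$, and for such harmonic $u$ the inequality $Q(u,u)\ge\qn(u,u)$ is extracted from the resolvents $G_\beta=(L+\beta)^{-1}$: sub-Markovianity gives $\beta\langle u-\beta G_\beta u,u\rangle=\frac{1}{2}\langle f_\beta,1\rangle$ with $f_\beta\ge 0$ (Proposition~\ref{qb}), and Fatou's lemma together with Proposition~\ref{convergence-at-zero} lets one pass to $\beta\to\infty$ against test functions $0\le v\le 1$, matching the resulting expression with a careful Fubini computation of $\qn(u,u)$. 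Some argument of this kind, exploiting positivity of the resolvents on harmonic functions rather than a pointwise identification of the form, is what your proposal is missing.
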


The proof of this theorem is given  after a series of intermediate claims.  We will assume that we are given a locally finite graph $(b,c)$ over $(V,m)$ throughout.  
Moreover, by a slight abuse of notation, we will write
 $$\langle u, v\rangle := \sum_{x\in V} u(x) \overline{v(x)} m(x)$$
for all $u\in C(V)$ and $v\in C_c (V)$.

\smallskip

By definition, the form associated to $L\in \mathcal{E}$ is a  Dirichlet form. This has the following consequences which will be repeatedly used in the sequel (see \cite{Fuk, Dav2} for proofs): For any $\beta >0$, the resolvent
 $$G_\beta := ( L + \beta)^{-1}$$ is positivity preserving, i.e., maps non-negative functions to non-negative functions. Moreover, for all $1\leq p \leq\infty$, the map  $G_\beta$ extends  uniquely  to a map on $\ell^p (V,m)$, again denoted by $G_\beta$,  with norm not exceeding  $\frac{1}{\beta}$ and satisfying
$$G_\beta u = \lim_{n\to \infty}  G_\beta u_n $$
whenever $u_n, u \geq 0$ with $u_n \to u$ monotonously increasing. The $G_\beta$  are obviously selfadjoint  on $\ell^2 (V,m)$ and their extensions have the  following symmetry property
$$\langle G_\beta u, v\rangle = \langle u, G_\beta v\rangle$$
for any $ u\in \ell^p (V,m)$, $p\geq 1$, and $v\in C_c (V)$.

\smallskip

\begin{prop} \label{convergence-at-zero}  Let $L\in \mathcal{E}$ and $G_\beta = (L + \beta)^{-1}$ for $\beta>0$. For any $u\in \ell^p (V,m)$ and $v \in C_c (V)$ the equation
$$\lim_{\beta \to \infty} \beta \langle u - \beta G_\beta u, v\rangle = \langle \wl u, v\rangle $$
holds.
\end{prop}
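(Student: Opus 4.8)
The plan is to transfer the resolvent $G_\beta$ off the $\ell^p$-function $u$ and onto the compactly supported test function $v$, where everything reduces to finite sums, and then to exploit the operator-norm bound $\|G_\beta\|\le 1/\beta$ that is available on \emph{every} $\ell^q(V,m)$. First I would use the symmetry property $\langle G_\beta u,v\rangle = \langle u, G_\beta v\rangle$ (valid for $u\in\ell^p(V,m)$ and $v\in C_c(V)$) together with the fact that $\beta>0$ is real to rewrite
$$\beta\langle u - \beta G_\beta u, v\rangle = \beta\langle u,v\rangle - \beta^2\langle u, G_\beta v\rangle = \langle u, \beta(v - \beta G_\beta v)\rangle = \langle u, \beta(I - \beta G_\beta) v\rangle.$$

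Next I would bring in the operator $L$. Since the graph is locally finite, Proposition~\ref{maximaloperator} shows that $L$, being a restriction of $\wl$, is an extension of $L_c$, so that $C_c(V) = D(L_c)\subseteq D(L)$ and $Lv = \wl v$ for $v\in C_c(V)$. For $v\in D(L)$ the resolvent identity gives
$$(I - \beta G_\beta) v = G_\beta(L+\beta)v - \beta G_\beta v = G_\beta L v = G_\beta \wl v,$$
and hence $\beta\langle u - \beta G_\beta u, v\rangle = \langle u, \beta G_\beta w\rangle$ with $w := \wl v$. By local finiteness $w=\wl v\in C_c(V)$, and likewise $\wl w\in C_c(V)$ has finite support.

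Finally I would prove that $\beta G_\beta w\to w$ in $\ell^q(V,m)$, where $\tfrac1p+\tfrac1q=1$. Applying the resolvent identity once more to $w\in C_c(V)\subseteq D(L)$ yields $w - \beta G_\beta w = G_\beta \wl w$, whence
$$\|w - \beta G_\beta w\|_q = \|G_\beta \wl w\|_q \le \tfrac{1}{\beta}\,\|\wl w\|_q \longrightarrow 0 \qquad (\beta\to\infty),$$
the quantity $\|\wl w\|_q$ being finite since $\wl w$ has finite support. By Hölder's inequality this forces $\langle u, \beta G_\beta w\rangle \to \langle u, w\rangle$. It then only remains to identify the limit: $\langle u, w\rangle = \langle u, \wl v\rangle = \langle \wl u, v\rangle$, which is precisely the integration by parts \eqref{PI} (applicable because $u\in\ell^p(V,m)\subseteq C(V) = \widetilde F$ for locally finite graphs). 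Combining the three displays gives the assertion.

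The main obstacle is that $u$ lies only in $\ell^p(V,m)$, so one cannot invoke the $\ell^2$ spectral calculus, and attempting direct pointwise convergence of $\beta G_\beta u$ is delicate when $\inf_x m(x)=0$. The device that removes this difficulty is exactly the transfer of the limit onto the finitely supported function $w=\wl v$ combined with the uniform contraction estimate $\|G_\beta\|_{\ell^q\to\ell^q}\le 1/\beta$, which holds for all $q\in[1,\infty]$; everything else is bookkeeping with the symmetry property and \eqref{PI}.
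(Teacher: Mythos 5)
Your proposal is correct and follows essentially the same route as the paper's proof: both use the symmetry property $\langle G_\beta u,v\rangle=\langle u,G_\beta v\rangle$ to move the resolvent onto the test function, the identity $v=G_\beta(L+\beta)v$ (valid since $C_c(V)\subseteq D(L_c)\subseteq D(L)$ by local finiteness) to reduce the expression to $\langle u,\beta G_\beta \wl v\rangle$, and then \eqref{PI} together with $\widetilde F=C(V)$ to identify the limit as $\langle \wl u,v\rangle$. The only deviation is in justifying the limit $\beta G_\beta \wl v\to \wl v$: the paper appeals to the spectral theorem (which directly yields $\ell^2$-convergence), whereas your use of the bound $\|\beta G_\beta w-w\|_q=\|G_\beta\wl w\|_q\le \beta^{-1}\|\wl w\|_q$ gives convergence in $\ell^q$, which is in fact exactly what the H\"older pairing against $u\in\ell^p$ requires when $p\neq 2$ and $\inf_{x}m(x)=0$, so your justification of this step is, if anything, the more careful one.
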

\begin{proof} It suffices to show
$$\lim_{\beta \to \infty} \beta \langle u - \beta G_\beta u, v\rangle = \langle  u, \wl v\rangle. $$
Then, the claim follows from \eqref{PI} as $\widetilde{F} = C(V)$ due to local finiteness. We calculate
\begin{eqnarray*}
\beta \langle u  -  \beta G_\beta u, v\rangle &=& \beta \left( \langle u, v\rangle - \beta \langle G_\beta u, v\rangle \right)\\
(!) &=& \beta \langle G_\beta u, L v\rangle\\
&=&\langle u, \beta  G_\beta  L v\rangle \\
(!!) &\to & \langle u, L v\rangle.
\end{eqnarray*}
As $L\subseteq L_M$ (see above), this shows the desired statement.

Here, (!!)  follows from the  spectral theorem. The statement
 $(!)$ follows as, obviously,
$$ \langle G_\beta u, (L + \beta) v\rangle =  \langle u, v\rangle$$
and hence
$$ \langle G_\beta u, L v\rangle = \langle u, v\rangle - \beta \langle G_\beta u, v\rangle.$$
This finishes the proof.
\end{proof}

\begin{prop} \label{qb} Let $L\in \mathcal{E}$ and $G_\beta = (L + \beta)^{-1}$ for $\beta>0$.  Then, for any real-valued $u\in D(Q)$
$$ \beta \langle u - \beta G_\beta u, u\rangle  = \frac{1}{2} \langle f_\beta, 1\rangle$$
where the non-negative function $f_\beta : V\longrightarrow \R$  belonging to $\ell^1 (V,m)$ is given by
\begin{align*}
f_\beta (x) &= \beta^2 G_\beta (u(x) 1 - u)^2 (x) + 2 \beta u(x)^2 ( 1 - \beta G_\beta 1 (x))\\
 &= - \beta \left(u^2 (x) - \beta (G_\beta u^2) (x) \right) + 2 \beta u(x) \left( u(x) - \beta G_\beta u (x) \right) \\
 & \qquad + \beta u^2 (x) ( 1 - \beta G_\beta 1 (x)).
 \end{align*}
Here, $u(x) 1$ denotes the constant function with value $u(x)$ on $V$.
\end{prop}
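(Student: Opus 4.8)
The plan is to reduce the statement to three ingredients: the algebraic equivalence of the two displayed formulas for $f_\beta$, the non-negativity and integrability claims, and the evaluation of $\langle f_\beta, 1\rangle$ against the second (expanded) formula. First I would verify that the two expressions for $f_\beta$ coincide. Writing the function $(u(x)1 - u)^2$ on $V$ as $u(x)^2\,1 - 2u(x)\,u + u^2$ and applying $G_\beta$ at the point $x$ gives
$$G_\beta(u(x)1 - u)^2(x) = u(x)^2 G_\beta 1(x) - 2u(x) G_\beta u(x) + G_\beta u^2(x);$$
substituting this into the first formula and regrouping the terms in $u^2(x)$, $u(x) G_\beta u(x)$, $G_\beta u^2(x)$ and $u^2(x) G_\beta 1(x)$ reproduces the second. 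I treat this as routine.

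For non-negativity I would use the first formula. Since $(u(x)1 - u)^2 \ge 0$ pointwise and $G_\beta$ is positivity preserving, the summand $\beta^2 G_\beta(u(x)1 - u)^2(x)$ is non-negative. For the second summand I invoke the Markovian property: because $G_\beta$ is the resolvent of a Dirichlet form, $\beta G_\beta$ is a positivity-preserving contraction on $\ell^\infty(V)$, so $0 \le \beta G_\beta 1 \le 1$ and hence $2\beta u(x)^2(1 - \beta G_\beta 1(x)) \ge 0$. Thus $f_\beta \ge 0$. For integrability I would use the second formula: $u^2$ and $G_\beta u^2$ lie in $\ell^1(V,m)$ (the latter since $G_\beta$ maps $\ell^1$ into $\ell^1$ with norm at most $1/\beta$), the product $u\cdot G_\beta u$ lies in $\ell^1$ by Cauchy--Schwarz as $u, G_\beta u \in \ell^2$, and $u^2\, G_\beta 1 \in \ell^1$ since $u^2 \in \ell^1$ and $G_\beta 1 \in \ell^\infty$; hence $f_\beta \in \ell^1(V,m)$.

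The heart of the proof is the evaluation of $\langle f_\beta, 1\rangle$ using the second formula. Pairing the middle term against $1$ gives $2\beta\langle u(u - \beta G_\beta u), 1\rangle = 2\beta\langle u - \beta G_\beta u, u\rangle$, using that $u$ is real-valued; this is exactly twice the desired left-hand side. It therefore remains to show that the pairings of the two remaining terms, $-\beta(u^2 - \beta G_\beta u^2)$ and $\beta u^2(1 - \beta G_\beta 1)$, against $1$ cancel. Expanding both, they reduce to combinations of $\langle u^2, 1\rangle$, $\langle G_\beta u^2, 1\rangle$ and $\langle u^2, G_\beta 1\rangle$, and a short computation shows the cancellation holds precisely when
$$\langle G_\beta u^2, 1\rangle = \langle u^2, G_\beta 1\rangle.$$

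This last identity is the main obstacle, since $1 \notin C_c(V)$ and the stated symmetry of $G_\beta$ applies only to test functions in $C_c(V)$. I would prove it by approximation: choose an increasing sequence $1_n \in C_c(V)$ of indicator functions of finite sets exhausting $V$, so $1_n \uparrow 1$. For each $n$ the symmetry property gives $\langle G_\beta u^2, 1_n\rangle = \langle u^2, G_\beta 1_n\rangle$. On the left, $G_\beta u^2 \ge 0$ by positivity preservation (as $u^2 \ge 0$), so monotone convergence yields $\langle G_\beta u^2, 1_n\rangle \to \langle G_\beta u^2, 1\rangle$. On the right, the monotone-convergence property of $G_\beta$ recorded before Proposition~\ref{convergence-at-zero} gives $G_\beta 1_n \uparrow G_\beta 1$ pointwise, and since $u^2 \ge 0$ a further application of monotone convergence gives $\langle u^2, G_\beta 1_n\rangle \to \langle u^2, G_\beta 1\rangle$. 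Letting $n \to \infty$ establishes the identity, whence $\langle f_\beta, 1\rangle = 2\beta\langle u - \beta G_\beta u, u\rangle$ and the claimed formula follows.
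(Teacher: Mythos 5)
Your proof is correct and follows essentially the same route as the paper's: both expressions for $f_\beta$ are reconciled by linearity, the first is used for non-negativity (via positivity preservation and $\beta G_\beta 1 \le 1$), the second for membership in $\ell^1(V,m)$ and for the evaluation of $\langle f_\beta, 1\rangle$. The paper compresses the final step into ``direct computations'' based on the identity $m(x)\, G_\beta w(x) = \langle w, G_\beta \delta_x\rangle$, which is just the delta-function form of your extension of the symmetry $\langle G_\beta u^2, 1\rangle = \langle u^2, G_\beta 1\rangle$ via $1_n \uparrow 1$ and monotone convergence.
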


\begin{proof} We start by discussing the definition of $f_\beta$: We first note that both expressions for $f_\beta$ make sense as $G_\beta$ is applied to (sums of) elements from $\ell^p (V,m)$ for $1 \leq p \leq \infty$.  Note that the first representation gives that  $f_\beta$ is non-negative  and the second representation gives that $f_\beta$ belongs to $\ell^1 (V,m)$.

Finally,  the claimed equalities follow by direct computations. These use
$$ m(x) G_\beta w (x) =  \langle G_\beta w, \delta_x\rangle = \langle w, G_\beta \delta_x\rangle$$
for any $w$ which is a sum of functions in $\ell^p (V,m)$ and for $\delta_x $, the characteristic function of $\{x\}$.
\end{proof}

\begin{prop} \label{decomposition}  For any $L\in \mathcal{E}$ with associated form $Q$ the Hilbert space $D(Q) $ with inner product $Q (\cdot,\cdot) + \langle \cdot,\cdot\rangle$ is the orthogonal sum
$$D(Q) = D (\qd) \oplus \mathcal{H}^{(Q)},$$
where $\mathcal{H}^{(Q)}  = \{u\in D(Q) : (\wl + 1) u = 0\}$.  In particular, any $u\in D(Q)$ can be decomposed uniquely as $u = u_0 + h$ with $u_0 \in D(\qd)$ and $h \in \mathcal{H}^{(Q)}$ and
$$ Q(u,u) + \langle u, u\rangle = \qd (u_0, u_0) + Q (h,h) + \langle u_0, u_0 \rangle + \langle h, h\rangle. $$
\end{prop}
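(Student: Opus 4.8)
The plan is to recognize that this proposition is essentially a special instance of the abstract decomposition result, Theorem~\ref{general}, applied to the form $Q$ associated with $L\in\mathcal{E}$. The crux is to verify that $Q$ satisfies condition $\Condition$; once this is established, the orthogonal decomposition is immediate from Theorem~\ref{general}, and the norm identity follows by expanding the inner product $\langle\cdot,\cdot\rangle_Q$ and invoking orthogonality.

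First I would check that $Q$ satisfies $\Condition$. Since the graph $(b,c)$ is locally finite, the finiteness condition \textup{(FC)} holds, as $\wl$ maps $C_c(V)$ into $C_c(V)\subseteq\ell^2(V,m)$. By the definition of $\mathcal{E}$, the operator $L$ is a non-negative selfadjoint restriction of $\wl$. Hence the implication (ii)$\Longrightarrow$(i) of Theorem~\ref{locally-finite} gives that $L$ and its associated form $Q$ satisfy $\Condition$. This is the only place where local finiteness is used, and I would note in passing that \textup{(FC)} alone would already suffice here.

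With $\Condition$ in hand, Theorem~\ref{general} applies directly and yields the orthogonal sum decomposition $D(Q)=D(\qd)\oplus\mathcal{H}^{(Q)}$ with respect to $\langle\cdot,\cdot\rangle_Q$, where $\mathcal{H}^{(Q)}=\{u\in D(Q):(\wl+1)u=0\}$ agrees with the set defined in the statement. In particular, each $u\in D(Q)$ admits a unique decomposition $u=u_0+h$ with $u_0\in D(\qd)$ and $h\in\mathcal{H}^{(Q)}$.

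Finally, for the norm identity I would exploit orthogonality in the inner product $\langle\cdot,\cdot\rangle_Q=Q(\cdot,\cdot)+\langle\cdot,\cdot\rangle$. Since $u_0\perp h$ in this inner product, we have
$$\langle u,u\rangle_Q=\langle u_0,u_0\rangle_Q+\langle h,h\rangle_Q.$$
Expanding both sides and using Proposition~\ref{Decomposition}(a), which states that $Q$ agrees with $\qd$ on $D(\qd)$ so that $Q(u_0,u_0)=\qd(u_0,u_0)$, yields the claimed formula
$$Q(u,u)+\langle u,u\rangle=\qd(u_0,u_0)+Q(h,h)+\langle u_0,u_0\rangle+\langle h,h\rangle.$$
There is essentially no obstacle to overcome: the proposition is a direct corollary of Theorem~\ref{general}, and the only point requiring care is confirming the applicability of Theorem~\ref{locally-finite}, namely that $L$ is a restriction of $\wl$ and that \textup{(FC)} holds.
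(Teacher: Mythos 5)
Your proof is correct and follows essentially the same route as the paper, which also deduces the orthogonal decomposition directly from Theorem~\ref{general} and then reads off the norm identity. The only difference is that you spell out the verification that $Q$ satisfies $\Condition$ (via local finiteness, \textup{(FC)}, and Theorem~\ref{locally-finite}) and the expansion of $\langle\cdot,\cdot\rangle_Q$ using Proposition~\ref{Decomposition}(a), steps the paper leaves implicit.
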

\begin{proof} The first statement is an immediate consequence of Theorem~\ref{general} (see (ii) of Lemma 3.3.2 of \cite{Fuk} as well).
The second statement is an immediate consequence of the first statement.
\end{proof}

After these preparations, we are now ready to give a proof of the main result of this section.

\begin{proof}[Proof of Theorem~\ref{char-Neumann}:] To avoid tedious but non-essential terms we assume that $c\equiv 0$.  The statement on the Dirichlet operator $\dl$ is clear and has already been discussed in the introduction to this section. We show the statement on the Neumann operator. Thus, let $L\in \mathcal{E}$ be given and $Q$ be the associated Dirichlet form.  By Proposition~\ref{decomposition} (applied to both $Q$ and $\qn$) it suffices to show that
\begin{equation}\label{aim}
Q (u,u) \geq \qn (u,u)
\end{equation}
for all real-valued $u\in D(Q)$ with $(\wl + 1) u = 0$. We will investigate the left hand side and the right hand side of \eqref{aim} separately. To do so we define  $T : V\longrightarrow \R$ by
$$ T(x):= -  \sum_{y\in V}  b(x,y) (u(x) - u(y)) u(y).$$
Note that $T(x)$ is well-defined as $(b,c)$ is locally finite.

\smallskip

\textit{Right hand side of \eqref{aim}.} As we do not even know that $u\in D(\qn)$ we have to exercise some care. However, by Fubini's theorem and the local finiteness of $b$, the expression
\begin{eqnarray*} \qn (u,u) & = &  \frac{1}{2}\sum_{x,y\in V} b(x,y) (u(x) -  u(y)) (u(x) - u(y)) \\
&=&  \frac{1}{2}\sum_{x\in V} \left( \sum_{y\in V} b(x,y) (u(x) - u(y)) u(x) - \sum_{y\in V} b(x,y) (u(x) -u(y)) u(y)\right)
\end{eqnarray*}
is well-defined  (i.e., either converges absolutely or diverges to $\infty$) and all inner sums converge.   Now,  using $(\wl + 1) u =0$  and the absolute convergence of $\sum_x u(x)^2 m(x)$, we obtain
\begin{equation}\label{calculating-qn}
 0 \leq \qn (u,u) = \frac{1}{2} \sum_{x\in V} (- u(x)^2 m(x)  + T(x)),
 \end{equation}
 where the sum is well-defined, i.e., either converges absolutely  or diverges to $\infty$.

\smallskip

\textit{Left hand side of \eqref{aim}.} By the spectral theorem and Proposition~\ref{qb} we have that
$$ Q (u,u) \geq \beta\langle u - \beta G_\beta u, u\rangle  \geq \frac{1}{2} \langle f_\beta, v\rangle \geq 0
$$
for any $v\in C_c (V)$ with $0 \leq v\leq 1$ and any $\beta >0$. Here, we use that  $f_\beta \geq 0$.
Hence, taking $\beta \to \infty$, by Fatou's lemma, Proposition~\ref{convergence-at-zero} and the second expression for $f_\beta$ in Proposition~\ref{qb}, we obtain that
$$ Q(u,u) \geq \sum_{x\in V} v(x)\left (- \frac{1}{2} (\wl u^2) (x) + u \wl u (x) \right) m(x)\geq 0.$$
Now, a direct computation using $(\wl + 1) u =0$  shows that
$$(\wl u^2) (x) = -  u^2 (x)  -  \frac{1}{m(x)} T(x)$$
and
$$ u (x) \wl u (x) = -  u^2 (x).$$
Putting this together, we find that
$$ Q (u,u)  \geq  \frac{1}{2} \sum_{x\in V}  v(x) ( -  u^2 (x) m(x)  + T (x))\geq 0.$$
As $v\in C_c (V)$ with $0\leq v\leq 1$ was arbitrary, we obtain, in particular, that
$$  (- u^2 (x) m(x)  + T (x))  \geq 0$$
for all $x\in V$.  This shows that we can take a limit over $v$ with $ 0 \leq v \leq  1$ and $v\to 1$ pointwise to obtain
$$ Q (u,u) \geq \frac{1}{2}\sum_{x\in V} (- u^2 (x) m(x)  + T (x)).$$
Comparing this with \eqref{calculating-qn} we obtain that
$$ Q (u,u) \geq \qn (u,u)$$
 and the desired statement follows. This finishes the proof.
\end{proof}

 \bigskip

 \textbf{Remarks.} (a) Note that great care has to be exercised when plugging $u$ with $(\wl + 1) u =0$ into $\qn$ as, formally,
  $$0 \leq \qn (u,u) = \langle \wl u, u\rangle = - \langle u, u\rangle \leq 0$$
 giving $\qn (u,u)=0$ for all such $u$ (which would imply that $u=0$ whenever the graph is connected and  $m(V) = \infty$).

(b) In general, there will exist Dirichlet forms between $\qd$ and $\qn$.  One way to generate such a form is given as follows: Choose an arbitrary subset $A\subseteq  V$. Then, define
$$D_A^{'} :=\left\{ u\in D(\qn) : \sharp\{ x \in A : u(x) \neq 0\} <\infty \right\}$$
and let $D_A$ be the closure of $D_A^{'}$ in the Hilbert space $(D (\qn), \langle \cdot,\cdot\rangle_{\qn})$. Then, the restriction of $\qn$ to $D_A$ will be a Dirichlet form by Theorem~3.1.1 of \cite{Fuk}. In general, it will differ from both $\qn$ and $\qd$.

(c) 
The previous remark shows that there exist Dirichlet forms between $\qd$ and $\qn$. Based on the considerations of this paper one might try and characterize all of these forms (via boundary conditions). We consider this an interesting open problem.

(d) The considerations above use the local finiteness of the graph in various places.  It should be interesting to find out whether a similar result holds in the general case as well.

\bigskip

Our considerations give another characterization of $\qd = \qn$ in the case of locally finite graphs. To state the characterization we introduce one more piece of  (standard)  notation: A map $P$ from $[0,\infty)$ into the set of selfadjoint bounded operators on $\ell^2 (V,m)$ is called a \textit{strongly continuous symmetric semigroup} if it has the form  $P_t = e^{-t L}$ for a selfadjoint  $L$  which is bounded below.   Here, $e^{-t L}$ is understood in the sense of spectral calculus for selfadjoint operators.  The operator $L$ is called the \textit{generator} of the semigroup.  If the form associated to $L$ is a Dirichlet form, the semigroup is called \textit{Markovian}.

\begin{coro}\label{char-markov-semigroup}
Let $(V,m)$ be a discrete measure space and $(b,c)$ a locally finite graph over $(V,m)$.  Then, the following assertions are equivalent:
\begin{itemize}
\item[(i)] $\qd = \qn$.
\item[(ii)] There exists  only one   strongly continuous symmetric Markovian semigroup with generator contained in $\wl$.
\end{itemize}
\end{coro}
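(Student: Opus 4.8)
The plan is to prove the equivalence of the two characterizations of $\qd = \qn$. The key observation is that a strongly continuous symmetric semigroup $P_t = e^{-tL}$ with generator contained in $\wl$ corresponds bijectively to a selfadjoint restriction $L$ of $\wl$ which is bounded below, and if the semigroup is additionally Markovian then $L \in \mathcal{E}$. So assertion (ii) is really a statement about the cardinality of $\mathcal{E}$: there is a unique Markovian restriction of $\wl$ precisely when $\mathcal{E}$ is a singleton.

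The implication (i)$\Longrightarrow$(ii) is where Theorem~\ref{char-Neumann} does the work. Suppose $\qd = \qn$. Let $L \in \mathcal{E}$ be arbitrary with associated Dirichlet form $Q$. By Theorem~\ref{char-Neumann} we have $\qn \leq Q \leq \qd$. Since $\qd = \qn$, this forces $Q = \qd = \qn$ as forms, and hence $L = \dl = \nl$. Thus $\mathcal{E}$ contains exactly one element, which means there is only one strongly continuous symmetric Markovian semigroup with generator contained in $\wl$. This direction is essentially immediate from the main theorem of the section.

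For the reverse implication (ii)$\Longrightarrow$(i), I would argue by contraposition. Suppose $\qd \neq \qn$. Then $\dl \neq \nl$, and since both forms are Dirichlet forms (as noted in Section~\ref{Framework}), both $\dl$ and $\nl$ lie in $\mathcal{E}$. These are two distinct Markovian restrictions of $\wl$, so the associated semigroups $(e^{-t\dl})_{t \geq 0}$ and $(e^{-t\nl})_{t \geq 0}$ are two distinct strongly continuous symmetric Markovian semigroups with generator contained in $\wl$, contradicting (ii). Equivalently, (ii) says $\mathcal{E}$ is a singleton; since $\dl, \nl \in \mathcal{E}$ always, uniqueness forces $\dl = \nl$, i.e., $\qd = \qn$.

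The main (though modest) obstacle is purely a matter of bookkeeping: one must verify carefully that the correspondence between Markovian semigroups with generator in $\wl$ and elements of $\mathcal{E}$ is a genuine bijection, so that counting semigroups is the same as counting elements of $\mathcal{E}$. This uses the standard one-to-one correspondence between strongly continuous symmetric semigroups and selfadjoint operators bounded below, together with the fact that in the locally finite case a selfadjoint operator is a restriction of $\wl$ if and only if it is an extension of $L_c$ (the remark following the definition of $\mathcal{E}$), and that the Markovian property of the semigroup is exactly the condition that the associated form is a Dirichlet form. Once this identification is in place, both implications reduce to the observations above, with the nontrivial content supplied entirely by Theorem~\ref{char-Neumann} for (i)$\Longrightarrow$(ii) and by the trivial fact that $\dl$ and $\nl$ are always distinct Markovian restrictions when $\qd \neq \qn$ for (ii)$\Longrightarrow$(i).
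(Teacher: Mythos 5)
Your proof is correct and matches the paper's intended argument: the paper states this corollary without a separate proof precisely because it follows from Theorem~\ref{char-Neumann} in the way you describe, with (i)$\Rightarrow$(ii) forced by the sandwich $\qn \leq Q \leq \qd$ and (ii)$\Rightarrow$(i) by noting that $\dl$ and $\nl$ are always Markovian restrictions of $\wl$, so uniqueness forces them to coincide. Your added care in verifying the bijection between Markovian semigroups and elements of $\mathcal{E}$ is exactly the bookkeeping the paper's definitions (strongly continuous symmetric Markovian semigroup, generator) are set up to make immediate.
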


\textbf{Remark.} It should be interesting to find out to what extent a similar result may hold for more general Dirichlet forms.

\section{The equation $(\wl + 1) u =0$}\label{Equation}
In Section~\ref{Forms} we have seen that the set of solutions of $(\wl +1) u =0$ in $D(\qn)$ describes the difference between $\qn$ and $\qd$.  In particular, the disagreement of $\qn$ and $\qd$ was characterized in terms of nontrivial solvability of $(\wl + 1) u =0$ in $D (\qn)$.
In this section we put these results in perspective by discussing the nontrivial solvability of $(\wl + 1) u =0$ in  the spaces  $\ell^2 (V,m)$ and $\ell^\infty (V)$. This  turns out to be related to essential selfadjointness and stochastic completeness, respectively.
As a consequence, we obtain some immediate connections between the agreement of $\qn$ and $\qd$, essential selfadjointness and stochastic completeness. By examples, we show that no further implications hold in general.

\bigskip

 Before we start the discussion let us note that the number one in the equation $(\wl +1) u = 0$  does not play any special role. It could be replaced by any positive number $\alpha$. Then, virtually the same arguments apply to solutions of $(\wl + \alpha) u =0$. In fact, the arguments apply to  any number $\alpha$ with $-\alpha$ smaller than the infimum of the spectrum of $\nl$. We stick to the case $\alpha =1$ for convenience only.

 \bigskip

We now turn to the concept of stochastic completeness.  Recall that $(V,b,c,m)$ with $c\equiv 0$ is called \textit{stochastically complete} if $$M_t := e^{-t \dl} 1 \equiv 1$$
  for all $t\geq0$. This can be shown to be equivalent to  $(\wl +1) u =0$ not having a non-trivial solution in $\ell^\infty(V)$ (and to various further statements) \cite{Woj2,KL1,Hua}. It turns out that this type of characterization  can be extended to the case $c\not \equiv 0$ if one is willing to modify $M$.  More precisely, in the general case (with not necessarily vanishing $c$),  one defines,  for each $t\geq 0$,  the function $M_t$ on $V$ by
$$M_t :=e^{-t \dl } 1 + \int_0^t e^{-s \dl} \frac{c}{m} ds.$$
Here, for the non-negative $c/m$, the function  $e^{-t \dl} c/m$ is defined as a limit by approximating $c/m$ from below by non-negative functions in $C_c (V)$ (see \cite{KL1}). The function  $M_t$ turns out to be finite with values between $0$ and $1$.  Note that the function  agrees with our earlier definition of $M_t$ if $c\equiv 0$. We then say that $(V,b,c,m)$ satisfies $(SC_\infty)$ if $M_t \equiv 1$ and speak of $(SC_\infty)$ as \textit{stochastic completeness at infinity}. As shown in \cite{KL1,KL2} the following holds.

\begin{theorem}(Characterization of stochastic completeness)
Let $(V,m)$ be a discrete measure space and $(b,c)$ a graph over $(V,m)$. Then, the following assertions are equivalent:
\begin{itemize}
\item[(i)] $(V,b,c,m)$ is stochastically complete at infinity, i.e., $M_t \equiv 1$ for all $t\geq0$.
\item[(ii)]  There does not exist a non-trivial solution of $(\wl +1) u =0$ in $\ell^\infty (V)$.
\end{itemize}
\end{theorem}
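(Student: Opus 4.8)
The plan is to translate the statement about the semigroup quantity $M_t$ into one about the Dirichlet resolvent $G_\lambda := (\dl + \lambda)^{-1}$ by means of the Laplace transform, and then to reduce everything to the analysis of a single distinguished bounded solution. Since $0 \le M_t \le 1$, the transform converges, and a short computation using Fubini gives
\begin{equation*}
\int_0^\infty e^{-\lambda t} M_t\, dt = G_\lambda 1 + \frac{1}{\lambda}\, G_\lambda \frac{c}{m}.
\end{equation*}
Hence, setting
\begin{equation*}
w_\lambda := 1 - \lambda G_\lambda 1 - G_\lambda \frac{c}{m},
\end{equation*}
the condition $M_t \equiv 1$ (for all $t$) becomes, by uniqueness and continuity of Laplace transforms, equivalent to $w_\lambda \equiv 0$ for all $\lambda > 0$. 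Using $\wl 1 = c/m$ and $(\wl + \lambda) G_\lambda f = f$ (valid since $\dl$ is a restriction of $\wl$), a direct calculation shows $(\wl + \lambda) w_\lambda = 0$; moreover $M_t \in [0,1]$ translates, via the identity above, into $0 \le w_\lambda \le 1$. Thus $w_\lambda$ is a non-negative bounded solution of $(\wl + \lambda) u = 0$, and the theorem amounts to showing that this particular solution is trivial exactly when all bounded solutions are.

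For the implication (ii) $\Rightarrow$ (i) I would invoke the remark opening this section, by which the absence of a non-trivial bounded solution at $\lambda = 1$ is equivalent to its absence at every $\lambda > 0$. Since each $w_\lambda$ is a bounded solution, this forces $w_\lambda \equiv 0$ for all $\lambda > 0$, and therefore $M_t \equiv 1$.

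The substance lies in (i) $\Rightarrow$ (ii), which I would prove in contrapositive form: from any non-trivial bounded solution produce $w_\lambda \neq 0$. First pass to a real solution by taking the real or imaginary part (each solves the real-linear equation), and normalize so that $\|u\|_\infty \le 1$. The key point is that $|u|$ is a non-negative bounded subsolution: a Kato-type inequality for the graph Laplacian, namely $\wl |u|(x) \le \operatorname{sgn}(u(x))\, \wl u(x)$ (which follows from $\operatorname{sgn}(u(x))(u(x) - u(y)) \ge |u(x)| - |u(y)|$), yields $(\wl + \lambda)|u| \le 0$. Consequently $1 - |u| \ge 0$ and $(\wl + \lambda)(1 - |u|) \ge \lambda + c/m \ge 0$, so $1 - |u|$ is a non-negative supersolution for the data $\lambda + c/m$. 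The minimality of the Dirichlet resolvent (Theorem 11 of \cite{KL1}), which identifies $G_\lambda f$ as the smallest non-negative solution of $(\wl + \lambda) v = f$ and so dominates every such supersolution, then gives $1 - |u| \ge \lambda G_\lambda 1 + G_\lambda (c/m)$, that is, $|u| \le w_\lambda$. Since $u$ is non-trivial, $w_\lambda \not\equiv 0$, whence $M_t \not\equiv 1$.

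The main obstacle I anticipate is rigor rather than the formal manipulations: one must justify the comparison principle in the form \emph{a non-negative supersolution dominates the minimal resolvent solution} (not merely minimality among exact solutions), and one must handle $c/m$ with care, since $e^{-t\dl} c/m$ and $G_\lambda (c/m)$ are defined only through monotone approximation by $C_c(V)$-functions and $c/m$ need not lie in $\ell^2(V,m)$. Controlling these limiting definitions, together with the interchange of sum, integral and semigroup implicit in the Laplace-transform identity, is where the real work sits; the identity $(\wl + \lambda) w_\lambda = 0$ and the Kato inequality are comparatively routine.
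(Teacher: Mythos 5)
The paper itself offers no proof of this theorem: it is imported verbatim from \cite{KL1,KL2} (``As shown in \cite{KL1,KL2} the following holds''), so the benchmark is the argument of those references, which your proposal essentially reconstructs. Your outline --- the Laplace-transform reduction of $M_t \equiv 1$ to the vanishing of $w_\lambda = 1 - \lambda G_\lambda 1 - G_\lambda (c/m)$, the observation that $w_\lambda$ is itself a non-negative bounded solution of $(\wl + \lambda)u = 0$, and, for (i)$\Rightarrow$(ii), the Kato inequality together with the domination property of the Dirichlet resolvent to conclude $|u| \le w_\lambda$ for any real bounded solution normalized by $\|u\|_\infty \le 1$ --- is exactly the mechanism behind the cited result, and the technical caveats you flag (comparison against supersolutions rather than mere minimality among exact solutions, the monotone-limit definition of $G_\lambda(c/m)$, the Fubini interchanges) are the right ones; they are the points carried out in \cite{KL1}.

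The one step that fails as written is the opening of (ii)$\Rightarrow$(i). The remark at the start of the section does not assert that non-existence of bounded solutions at $\lambda = 1$ is equivalent to non-existence at every $\lambda > 0$; it only says that the section's results remain valid when $1$ is replaced by an arbitrary $\alpha > 0$. Reading it as the cross-$\lambda$ equivalence amounts to invoking, for every $\alpha$, the very theorem you are proving, which is circular. Fortunately the gap closes with material you already have: from (ii), $w_1 \equiv 0$, since $w_1$ is a bounded solution of $(\wl + 1)u = 0$; and since $0 \le M_t \le 1$, your Laplace identity at $\lambda = 1$ rearranges to
\begin{equation*}
w_1(x) = \int_0^\infty e^{-t}\bigl(1 - M_t(x)\bigr)\,dt ,
\end{equation*}
an integral of a non-negative integrand (continuous in $t$; this continuity is part of the technical groundwork you already flag). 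Hence $w_1(x) = 0$ alone forces $M_t(x) = 1$ for all $t \ge 0$, i.e.\ (i); no uniqueness theorem for Laplace transforms and no statement about other values of $\lambda$ is needed. Alternatively, the cross-$\lambda$ fact can be proved honestly via the resolvent identity, which yields $w_\lambda = w_\mu + (\mu - \lambda) G_\lambda w_\mu$, so that the vanishing of one $w_\mu$ forces the vanishing of all $w_\lambda$; but the positivity argument above is shorter and stays entirely within your framework.
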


\textbf{Remark.}  Note that $M$ is defined using the Dirichlet operator $\dl$.  Analogously, one might define
$$M_t^{(N)}:=e^{-t \nl } 1 + \int_0^t e^{-s \nl} \frac{c}{m} ds.$$
We then infer that $M\equiv 1 \Longrightarrow M^{(N)} \equiv 1$ (as $M\equiv 1$ implies $\qn = \qd$ by  Lemma~\ref{firstlemma} and the previous theorem).  However, the reverse implication that $M^{(N)} \equiv 1\Longrightarrow M\equiv 1$ does \textbf{not} hold. To see this, we can consider the   example of Appendix~\ref{Counter} (see  \cite{KL1} as well)  with $c \equiv 0$, $m(V) <\infty$ and $\qn \neq \qd$. By $\qn \neq \qd$, Lemma~\ref{firstlemma} and the preceding theorem, we infer the failure of $M\equiv 1$. On the other hand, by $m(V) <\infty$ we obtain that $1$ is eigenfunction of $\nl$. Thus, $M^{(N)} = e^{- t \nl} 1 \equiv 1$.  This shows that, in terms of processes, stochastic completeness cannot be defined with the `Neumann-process'. This seems  worth noting  as the characterization of stochastic completeness via (un)boundedness of solutions of $(\wl +1) u =0$ does not refer to any specific selfadjoint realization of $\wl$.

\bigskip

We now turn to essential selfadjointness. The following result essentially deals with the deficiency index being zero. In the context of graph Laplacians it could be derived from the considerations of \cite{KL1}. We include a proof for completeness.

\begin{theorem} \label{Char-ES} (Characterization of essential selfadjointness) Let $(V,m)$ be a discrete measure space and $(b,c)$ a graph over $(V,m)$.  Assume $\mathrm{(FC)}$, i.e.,  $\wl C_c (V) \subseteq \ell^2 (V,m)$.
Then, the following assertions are equivalent:
\begin{itemize}
\item[(i)] The restriction of $\wl$ to $C_c (V)$ is essentially selfadjoint.
\item[(ii)]  There does not exist a non-trivial solution of $(\wl +1) u =0$ in $\ell^2 (V,m)$.
\end{itemize}

\end{theorem}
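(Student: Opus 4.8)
The plan is to read this off as the classical fact that a semibounded symmetric operator is essentially selfadjoint exactly when its adjoint has trivial kernel at a single point below the spectrum, once the adjoint has been identified. The identification of the adjoint is the only nontrivial input, and it is already available: under $\mathrm{(FC)}$, Proposition~\ref{maximaloperator} tells us that the adjoint of the minimal operator $L_c$ (the restriction of $\wl$ to $C_c(V)$) is the maximal operator $L_M$, i.e. the restriction of $\wl$ to $D(L_M)=\{u\in\ell^2(V,m):\wl u\in\ell^2(V,m)\}$. So the strategy is to combine this with an abstract essential-selfadjointness criterion for non-negative operators and then translate the resulting kernel condition into the language of solutions of $(\wl+1)u=0$.

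First I would record that $L_c$ is symmetric and non-negative. Both facts are immediate from the integration-by-parts identity \eqref{PI}: for $u,v\in C_c(V)$ we have $\langle L_c u,v\rangle = \widetilde{Q}(u,v)$, and since $\widetilde{Q}$ restricted to $C_c(V)$ is a symmetric non-negative form, this yields $\langle L_c u,v\rangle = \langle u,L_c v\rangle$ and $\langle L_c u,u\rangle = \widetilde{Q}(u,u)\ge 0$. Next I would invoke the semibounded criterion: a non-negative symmetric operator $A$ is essentially selfadjoint if and only if $\ker(A^\ast+1)=\{0\}$. The mechanism is that the closure $\overline{L_c}$ is a closed non-negative symmetric operator, so $\overline{L_c}+1$ is bounded below by $1$ and hence has closed range; thus density of $\mathrm{Ran}(L_c+1)$ is equivalent to surjectivity of $\overline{L_c}+1$, which for a symmetric operator bounded below is equivalent to selfadjointness of $\overline{L_c}$, i.e. to essential selfadjointness of $L_c$. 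Taking orthogonal complements, density of $\mathrm{Ran}(L_c+1)$ is equivalent to $\ker(L_c^\ast+1)=\{0\}$.

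Finally I would translate the kernel condition. By Proposition~\ref{maximaloperator} we have $\ker(L_c^\ast+1)=\ker(L_M+1)=\{u\in D(L_M):(\wl+1)u=0\}$, and any solution of $(\wl+1)u=0$ in $\ell^2(V,m)$ automatically satisfies $\wl u=-u\in\ell^2(V,m)$, so this set is precisely the space of $\ell^2$-solutions of $(\wl+1)u=0$. Hence (i) is equivalent to (ii). I do not expect a genuine obstacle here, since the heavy lifting sits in Proposition~\ref{maximaloperator}; the only points requiring care are using the one-point (semibounded) criterion rather than the generic $\pm i$ deficiency-index criterion, which here is natural because $\wl$ is real and non-negative on $C_c(V)$, and noting the range-closedness that upgrades "dense range of $L_c+1$" to "selfadjointness of $\overline{L_c}$".
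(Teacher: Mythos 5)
Your proposal is correct and follows essentially the same route as the paper: both identify $L_c^\ast = L_M$ via Proposition~\ref{maximaloperator} under $\mathrm{(FC)}$, invoke the standard criterion that a non-negative symmetric operator is essentially selfadjoint if and only if $\ker(L_c^\ast + 1)=\{0\}$, and observe that this kernel is exactly the set of $\ell^2$-solutions of $(\wl+1)u=0$. The paper compresses the "standard theory" into a single sentence, whereas you spell out the range-closedness mechanism behind it; this is a welcome elaboration, not a deviation.
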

\begin{proof} Recall, from Section~\ref{Laplacians}, the definition of the operator $L_c$ as the restriction of $\wl$ to $C_c (V)$ and the maximal operator $L_M$ as the   restriction of $\wl$ to the set of all $u\in \ell^2 (V,m)$ with $\wl u\in \ell^2(V,m)$. Then, by Proposition~\ref{maximaloperator},   $L_c$ is a symmetric non-negative operator with adjoint $L_M$. As $L_c$ is non-negative, essential selfadjointness is then equivalent to triviality of the kernel of $L_M + 1$ by standard theory.
\end{proof}

We note the following consequence of  Lemma~\ref{firstlemma} and the considerations above.

\begin{coro}\label{con}
Let $(V,m)$ be a discrete measure space and $(b,c)$ a graph over $(V,m)$.

(a) If the graph is stochastically complete at infinity, then $\qn = \qd$.

(b) If $\wl$ maps $C_c (V)$ to $\ell^2 (V,m)$ and is essentially selfadjoint, then $\qn = \qd$.
\end{coro}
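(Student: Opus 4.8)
The plan is to prove both parts by contraposition, in each case using Lemma~\ref{firstlemma} to convert the assumption $\qn \neq \qd$ into the existence of a nontrivial solution of $(\wl + 1) u = 0$ lying in a prescribed function space, and then invoking the relevant characterization theorem to reach a contradiction. The entire content is already packaged in Lemma~\ref{firstlemma} together with the two characterization results above, so the argument amounts to matching up function spaces correctly.

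For part (a), I would assume $\qn \neq \qd$ and apply Lemma~\ref{firstlemma} to obtain a nontrivial, non-negative $u \in D(\qn) \cap \ell^\infty(V)$ solving $(\wl + 1) u = 0$. In particular, $u$ is a nontrivial solution lying in $\ell^\infty(V)$. By the characterization of stochastic completeness stated just above (the equivalence of (i) and (ii) there), the existence of such a solution is precisely equivalent to the failure of stochastic completeness at infinity. Hence $\qn \neq \qd$ forces the graph to be not stochastically complete at infinity, which is the contrapositive of (a).

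For part (b) the structure is identical, with $\ell^2(V,m)$ in place of $\ell^\infty(V)$. Assuming again $\qn \neq \qd$, Lemma~\ref{firstlemma} yields a nontrivial $u \in D(\qn)$ with $(\wl + 1) u = 0$. The only extra observation needed is that $D(\qn) \subseteq \ell^2(V,m)$ by the very definition of the form domain, so $u$ is a nontrivial $\ell^2$-solution. Under the hypothesis (FC), Theorem~\ref{Char-ES} states that essential selfadjointness of the restriction of $\wl$ to $C_c(V)$ is equivalent to the absence of nontrivial $\ell^2$-solutions of $(\wl + 1) u = 0$; the solution $u$ therefore contradicts essential selfadjointness, establishing the contrapositive of (b).

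Neither part presents a genuine obstacle: once Lemma~\ref{firstlemma} is available, all that remains is to track the trivial inclusions $D(\qn) \cap \ell^\infty(V) \subseteq \ell^\infty(V)$ and $D(\qn) \subseteq \ell^2(V,m)$. The one point deserving a moment's care is to confirm that the solution delivered by Lemma~\ref{firstlemma} really lands in the space demanded by each characterization, and it does, since that lemma produces a solution simultaneously in $\ell^2(V,m)$ and in $\ell^\infty(V)$; this single solution hence feeds directly into both the stochastic-completeness characterization and, via Theorem~\ref{Char-ES}, into the essential-selfadjointness characterization.
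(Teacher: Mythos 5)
Your proposal is correct and is essentially the paper's own argument: the paper states Corollary~\ref{con} as a direct consequence of Lemma~\ref{firstlemma} together with the characterizations of stochastic completeness at infinity and of essential selfadjointness (Theorem~\ref{Char-ES}), which is exactly the contrapositive reasoning you carry out, including the needed inclusions $D(\qn)\cap\ell^\infty(V)\subseteq\ell^\infty(V)$ and $D(\qn)\subseteq\ell^2(V,m)$ and the role of the hypothesis $\mathrm{(FC)}$ in part (b).
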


\bigskip

Abbreviating stochastic completeness at infinity by $S. C. $ and essential selfadjointness by $E. S.$ we
can summarize the preceding considerations  as follows:

\begin{align*}
\left.
\begin{array}{l}
\mbox{$S.C.$}\\
\mbox{$E.S.$}\\
\mbox{$Q^{(N)} = Q^{(D)}$ }
\end{array}
\right\}
\mbox{ No nontrivial solution to $(\widetilde{L} + 1) u = 0$ in }
\left\{
\begin{array}{r}
\mbox{$\ell^\infty (V)$}\\
\mbox{$\ell^2 (V,m)$ and (FC)}\\
\mbox{$D(\qn) \cap \ell^\infty (V)$}\\
\end{array}\right.
\end{align*}

In particular: $E.S.$  $\Longrightarrow$ $ \qn = \qd $ $ \Longleftarrow$ $S.C.$

\bigskip

 This shows some connections between stochastic completeness, essential selfadjointness and $\qn = \qd$. It turns out that no further implications hold, i.e., stochastic completeness at infinity and essential selfadjointness are independent. In particular, neither in (a) nor in (b) of Corollary~\ref{con}  does the reverse implication hold. This is now  shown by a series of examples:

\medskip

\textbf{Example 1 and Example 2} (Graphs satisfying $E. S.$ and  $S.C$. and graphs satisfying  $E. S.$ without $S.C.$, respectively) We consider graphs with  $m \equiv 1$, $c\equiv 0$ and $b$ taking values in $\{0,1\}$ only. Then, as shown in \cite{KL1, Woj1},  essential selfadjointness holds due to the assumption that $m\equiv 1$.

 More specifically, we will now  even further restrict attention to  radially symmetric rooted trees. Thus, we are given a tree with a root $o$ and all vertices with distance $n$ to the root have the same degree $d_n$. Then, as shown in \cite{Woj1}, the corresponding models will satisfy $S.C.$ if and only if
$$\sum_{n}\frac{1}{d_n} = \infty.$$
Thus, within the class of radially symmetric rooted trees, we can easily find examples satisfying $E. S.$ together with $S.C$. and examples satisfying  $E. S.$ without $S.C.$

\medskip

\textbf{Example 3} (Graphs satisfying  neither $S.C.$ nor $E. S$.)  The example in Appendix~\ref{Counter} (see  Section $4$ of \cite{KL1} as well) gives a situation  with  $m(V)<\infty$, $c \equiv 0$, and $1\not\in D(\qd)$ implying, in particular, that $\qn \neq \qd$. Thus, in this example, we have neither essential selfadjointness nor stochastic completeness.

\medskip

\textbf{Example 4} (Graphs satisfying $S.C.$ without $E.S.$)  Based on the  first example of Section $4$ of \cite{KL1} we may give such a graph as follows:
Let $V=\Z$ and  $b(x,y)=1$ if $|x-y|=1$ and zero otherwise. The weighted graph $(b,0)$ over $(V,1)$ gives rise to the bounded operator ${\Delta}:\ell^2(\Z)\to\ell^{2}(\Z)$ which is the restriction of the formal operator $$(\widetilde{\Delta}w)(x)=-w(x-1)+2w(x)-w(x+1)$$
to $\ell^2(\Z):= \ell^2(\Z,1)$.
As ${\Delta}$ is bounded (see, e.g., Theorem \ref{char-boundedness} in Section \ref{Boundedness} below), it is essentially selfadjoint. Moreover, it is well-known that $(\Z,b,0,1)$ is stochastically complete (see Examples~1 and~2 and compare, for instance, \cite[Theorem~2.10]{DM}) and, therefore, there is no solution to $(\widetilde{\Delta}+1)w=0$ in $\ell^{\infty}(\Z)$.

Let $\lm:=\arccosh (3/2)$.
As $e^{\lm}+e^{-\lm}-2=1$, one  checks that
$$u:\Z\to[0,\infty), \; x\mapsto e^{\lm x}$$
 is a solution to $(\widetilde{\Delta}+1)u=0$.  Choose $\ph\in\ell^{1}(\Z)$ with $\ph(x) > 0$ for all $x \in \Z$.  Let the  measure $m$ on $\Z$ be given by
\begin{align*}
 m(x) =\min \left \{1,\frac{\ph (x)}{u^2 (x)} \right \}
\end{align*}
and the  map $c:\Z\to[0,\infty)$ via
\begin{align*}
c(x):=\max \left \{ 0,\frac{u^2 (x)}{\ph(x)}-1 \right \}m(x).
\end{align*}
Then,
$$ 1 \equiv c + m$$
by construction.
The graph $(\Z,b,c,m)$ induces the formal operator
$$(\widetilde{L}w)(x) =\frac{1}{m(x)}({-w(x-1)+2w(x)-w(x+1)})+\frac{c(x)}{m(x)}w(x),$$
or, formally, $\widetilde L=\frac{1}{m}(\widetilde{\Delta}+c)$.
As $1 \equiv c + m $, one directly checks that  $w:\Z\to\R$ solves
$$\mbox{$(\widetilde{L}+1)w=0$\; if and only if \; $(\widetilde{\Delta}+1)w=0$.}$$

We conclude the following: The function $u:x\mapsto e^{\lm x}$ belongs to  $\ell^2(\Z,m)$ by the choice of $m$ and  $(\widetilde{L}+1)u=0$ since $(\widetilde{\Delta}+1)u=0$. Therefore,  the restriction of $\widetilde{L}$ to $C_c(\Z)$ is not essentially selfadjoint.  On the other hand, there is no solution to $(\widetilde{L}+1)w=0$ in $\ell^{\infty}(\Z)$ since there is no solution to  $(\widetilde{\Delta}+1)w=0$ in $\ell^{\infty}(\Z)$ (see above).  Hence, $(\Z,b,c,m)$ is stochastically complete.

\section{Maximum principle and characterization of positivity improvement}\label{Maximum}
In this section we present a maximum principle and use it to characterize positivity improvement of a positivity preserving  semigroup of the form $(e^{- t L})_{t\geq 0}$ with $L\subseteq \wl$. For the Dirichlet Laplacian this has already been done in \cite{KL1}.

\bigskip

\begin{theorem}(Maximum principle) Let $(V,m)$ be a discrete measure space, $(b,c)$ a connected graph over $(V,m)$ and $\wl = \wl_{(b,c,m)}$. Let $w$ be a real-valued solution of $\wl  w \leq  0$. If $w$ attains its maximum and this maximum is non-negative, then $w$ is constant and even $w \equiv 0$ if $c\not \equiv 0$.
\end{theorem}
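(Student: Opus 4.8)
The plan is to prove a maximum principle for subsolutions of the formal Laplacian $\wl$ on a connected graph. Recall that $\wl w(x) = \frac{1}{m(x)}\sum_{y\in V} b(x,y)(w(x)-w(y)) + \frac{c(x)}{m(x)} w(x)$, and we are given $\wl w \le 0$ everywhere, that $w$ attains its maximum, and that this maximum value $M := \max_{x\in V} w(x)$ is non-negative. First I would fix a point $x_0\in V$ where the maximum is attained, so $w(x_0) = M \ge 0$.

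The key step is to evaluate the subsolution inequality at $x_0$ and exploit the sign structure. At $x_0$ we have
$$ 0 \ge m(x_0)\,\wl w(x_0) = \sum_{y\in V} b(x_0,y)\bigl(w(x_0)-w(y)\bigr) + c(x_0) w(x_0). $$
Since $x_0$ is a maximizer, each term $w(x_0)-w(y) \ge 0$, and since $b(x_0,y)\ge 0$ every summand in the first sum is non-negative; moreover $c(x_0) w(x_0) \ge 0$ because $c(x_0)\ge 0$ and $w(x_0)=M\ge 0$. Hence the right-hand side is a sum of non-negative terms that is $\le 0$, forcing every term to vanish. In particular $b(x_0,y)\bigl(w(x_0)-w(y)\bigr) = 0$ for all $y$, so $w(y) = w(x_0) = M$ for every neighbor $y$ of $x_0$ (those $y$ with $b(x_0,y)>0$), and additionally $c(x_0)\,M = 0$.

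Next I would propagate this along the graph using connectedness. Every neighbor $y$ of $x_0$ also attains the maximum $M$, so the same argument applies with $y$ in place of $x_0$, showing that the neighbors of $y$ also equal $M$. Iterating along any path and invoking connectedness of $(b,c)$, I conclude that $w \equiv M$ on all of $V$; thus $w$ is constant. Finally, to handle the case $c\not\equiv 0$: pick any vertex $z$ with $c(z)>0$. Since $w(z)=M$ and, by the vanishing argument applied at $z$, we have $c(z)\,w(z)=0$, it follows that $c(z)\,M = 0$, and $c(z)>0$ forces $M=0$; hence $w\equiv 0$. The only point requiring a little care, which I regard as the main (though mild) obstacle, is ensuring the infinite sum $\sum_y b(x_0,y)(w(x_0)-w(y))$ is handled rigorously as a sum of non-negative terms so that the conclusion "each term vanishes" is legitimate even without local finiteness; this is fine because a convergent (or properly divergent) sum of non-negative reals bounded above by a non-positive quantity must have all summands equal to zero, and the summability of $\wl w(x_0)$ is guaranteed since $w$ is bounded (it attains its maximum and, being a solution, lies in $\widetilde F$) together with $\sum_y b(x_0,y)<\infty$.
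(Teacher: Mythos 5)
Your proof is correct and follows essentially the same route as the paper's: evaluate $\wl w$ at a maximum point, use non-negativity of every summand to force $w(y)=w(x_0)$ at all neighbors, propagate by connectedness, and then revisit the inequality at a vertex with $c(z)>0$ to force $w\equiv 0$. Your extra care about the infinite sum (non-negative terms bounded above by zero must all vanish) is a sound elaboration of a point the paper leaves implicit.
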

\begin{proof} Let $x\in V$ be given such that $w$ attains its non-negative  maximum at $x$. From
$$0 \geq  \wl  w (x) = \frac{1}{m(x)}\sum_{y\in V} b(x,y) (w(x) - w(y)) + \frac{c(x)}{m(x)} w(x) $$
we then infer that all $y\in V$ with $b(x,y) >0$ must have $w(x) = w(y)$. Inductively, we obtain the constancy of  $w$. Now, a second look at $\wl w \leq 0$ shows the last part of the statement.
\end{proof}

\begin{coro} Let $(V,m)$ and $(b,c)$ be as in the previous theorem. Let $u$ be a non-negative solution to $(\wl + 1) u \geq 0$. Then, either $u$ is strictly positive  or $u\equiv 0$.
\end{coro}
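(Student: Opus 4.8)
The plan is to prove this as a strong minimum principle, using the same local computation that drives the maximum principle above, now carried out at a point where the non-negative function $u$ vanishes, and then propagating zeros along edges by connectedness. Concretely, I would establish the dichotomy by assuming that $u$ is \emph{not} strictly positive — i.e. $u(x_0)=0$ for some $x_0\in V$ — and showing that this forces $u\equiv 0$.

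First I would examine $\wl u$ at such a point $x_0$. Since $u\ge 0$ and $u(x_0)=0$, every summand $b(x_0,y)(u(x_0)-u(y))=-b(x_0,y)u(y)$ is $\le 0$ and the term $c(x_0)u(x_0)/m(x_0)$ vanishes, so that
\[
\wl u(x_0) = -\frac{1}{m(x_0)}\sum_{y\in V} b(x_0,y)\,u(y) \;\le\; 0.
\]
On the other hand, evaluating the hypothesis $(\wl+1)u\ge 0$ at $x_0$ gives $\wl u(x_0)\ge -u(x_0)=0$. Combining the two inequalities forces $\wl u(x_0)=0$, whence $\sum_{y\in V} b(x_0,y)u(y)=0$. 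As every term of this (absolutely convergent) sum is non-negative, I conclude $b(x_0,y)u(y)=0$ for all $y$, i.e. $u(y)=0$ for every neighbor $y$ of $x_0$.

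Finally I would propagate this statement through the graph. The previous step shows that the zero set $Z:=\{x\in V: u(x)=0\}$ is closed under passage to neighbors: if $x_0\in Z$, then all neighbors of $x_0$ lie in $Z$. Hence $Z$ is a union of connected components of $(b,c)$. Since the graph is connected and $Z\neq\emptyset$ (it contains $x_0$), it follows that $Z=V$, that is $u\equiv 0$. Thus either $u$ vanishes somewhere and is then identically zero, or $u$ never vanishes and is strictly positive, as claimed.

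I do not expect a genuine obstacle here, since the argument is a direct adaptation of the preceding maximum principle; note that the theorem itself cannot be applied verbatim, because $w=-u$ satisfies only $\wl w\le u$ rather than $\wl w\le 0$, so one really wants the local computation rather than the black box. The only points needing a little care are that $\wl u(x_0)$ is genuinely defined as an absolutely convergent sum — which holds because $u$, being a solution, lies in $\widetilde{F}$, so $\sum_{y} b(x_0,y)u(y)<\infty$ — and that the $c$-term is harmless at a zero of $u$, which it is, being simply $0$ there.
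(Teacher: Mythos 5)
Your argument is correct: at a zero $x_0$ of $u$, non-negativity of $u$ makes $\wl u(x_0)\le 0$, while the hypothesis $(\wl+1)u\ge 0$ gives $\wl u(x_0)\ge -u(x_0)=0$; hence $\sum_{y}b(x_0,y)u(y)=0$, so all neighbours of $x_0$ are again zeros, and connectedness propagates the zero set to all of $V$. The convergence issue is indeed settled by $u\in\widetilde{F}$. Where you genuinely differ from the paper is in your closing remark: you claim the maximum principle cannot be used as a black box, but it can, and that is the paper's entire proof. The paper applies the theorem to $w=-u$ with $\wl$ replaced by $\wl+1$; the observation you are missing is that $\wl+1$ is itself the formal Laplacian of the graph $(b,c+m)$ over $(V,m)$ --- the $+1$ is absorbed into the potential --- and this graph is connected whenever $(b,c)$ is, since connectedness depends only on $b$. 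If $u$ vanishes somewhere, then $w=-u\le 0$ attains its non-negative maximum $0$ and satisfies $(\wl+1)w\le 0$, so the theorem applied to this modified graph yields $w\equiv 0$ outright; moreover, since the new potential $c+m\ge m$ is nowhere zero, the theorem's stronger conclusion ($w\equiv 0$ rather than merely constant) kicks in automatically, sparing one even the final propagation step. In short, both proofs are sound: yours is a self-contained unfolding of the same local mechanism that proves the maximum principle, while the paper's substitution trick $c\mapsto c+m$ derives the corollary verbatim from the stated theorem in one line.
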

\begin{proof} This is a direct consequence of the previous theorem applied with  $w=-u$ and $\wl$ replaced by $\wl + 1$.
\end{proof}

Recall that a bounded operator $A$ on $\ell^2 (V,m)$ is \textit{positivity preserving} if it maps non-negative functions to non-negative functions. It is called \textit{positivity improving} if it maps non-negative functions which do not vanish identically to  strictly positive functions. A semigroup $(e^{-t L})_{t\geq 0}$ is said to be positivity preserving and positivity improving, respectively, if, for every $t>0$, $e^{-t L}$ has the corresponding property.

\begin{theorem} \label{CPI} (Characterization of positivity improvement)
Let $(V,m)$ be a discrete measure space and $(b,c)$ a graph over $(V,m)$.   Let $L$ be a selfadjoint non-negative restriction of $\wl$ such that the associated semigroup $(e^{-t L})_{t\geq 0}$ is positivity preserving.
Then, the semigroup $(e^{-tL})_{t\geq 0}$ is positivity improving if and only if $(b,c)$ is connected.
\end{theorem}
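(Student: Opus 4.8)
The plan is to prove both directions of the equivalence. The forward direction (positivity improving $\Rightarrow$ connected) is the easy contrapositive: if $(b,c)$ is disconnected, then $V$ splits into at least two connected components $V_1, V_2$ (with $b(x,y)=0$ for $x\in V_1, y\in V_2$), and the formal Laplacian $\wl$ respects this decomposition. Since $L$ is a restriction of $\wl$, one expects $\ell^2(V_1,m)$ to be an invariant subspace, so a non-negative function supported in $V_1$ can never be mapped by $e^{-tL}$ to a function that is strictly positive on $V_2$. Thus positivity improvement fails.

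The substantive direction is connected $\Rightarrow$ positivity improving. Here I would argue by contradiction. Suppose $(b,c)$ is connected but the semigroup is not positivity improving. Then there exist $t_0>0$ and a non-negative $f\not\equiv 0$ such that $g := e^{-t_0 L} f$ vanishes at some point, i.e. the closed set $Z := \{x : g(x)=0\}$ is non-empty, while $g\ge 0$ everywhere (using positivity preservation) and $g\not\equiv 0$ (since the semigroup is injective, or at least $g\ne 0$ for small enough $t_0$). The goal is to show $Z$ must be either empty or all of $V$, contradicting connectedness together with $g\not\equiv 0$. The natural tool is the maximum-principle circle of ideas just established: I would aim to show that $g\ge 0$ together with a pointwise inequality of the form $\wl g \le \lambda g$ forces $g$ to vanish at a neighbor of any of its zeros, so that the zero set propagates along edges. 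Concretely, at a point $x\in Z$ where $g(x)=0$ is the minimum of the non-negative function $g$, the equation
$$\wl g(x) = \frac{1}{m(x)}\sum_{y} b(x,y)(g(x)-g(y)) + \frac{c(x)}{m(x)} g(x) = -\frac{1}{m(x)}\sum_{y} b(x,y) g(y) \le 0,$$
so if one can control the sign of $\wl g(x)$ from above by something non-negative, each neighbor $y$ with $b(x,y)>0$ must also satisfy $g(y)=0$; connectedness then spreads the zero to all of $V$, forcing $g\equiv 0$, a contradiction.

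The key technical step, and the main obstacle, is turning the semigroup non-positivity into a usable pointwise differential inequality for a fixed function. The cleanest route is probably to pass to the resolvent: since $e^{-tL}$ is positivity improving for all $t>0$ if and only if $(L+\beta)^{-1}$ is positivity improving for all (equivalently some) $\beta>0$, I would instead analyze $h := (L+\beta)^{-1}\delta_x$ for a fixed vertex $x$ and large $\beta$. This $h$ is non-negative (positivity preservation), lies in $D(L)\subseteq D(Q)\subseteq \widetilde F$, and by Corollary~\ref{regular-coro} satisfies $(\wl+\beta) h = \delta_x$ pointwise. Away from $x$ this reads $\wl h = -\beta h \le 0$ on $V\setminus\{x\}$, which is exactly the hypothesis of the maximum-principle corollary (applied to $u=h$, after shifting). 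If $h$ vanished at some $y\ne x$, the minimum principle at $y$ would force all neighbors of $y$ to be zeros as well, and by connectedness the zero set would either reach $x$ (impossible, since propagating back gives $\delta_x = (\wl+\beta)h$ a contradiction at the boundary of the zero set) or exhaust $V\setminus\{x\}$, again contradicting $h\not\equiv 0$. Hence $h$ is strictly positive on $V$, giving positivity improvement of the resolvent and therefore of the semigroup.

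The delicate points I expect to need care are: first, justifying that the zero set of a non-negative super-solution genuinely propagates across every edge (the inequality $\sum_y b(x,y)g(y)\le 0$ with all terms $b(x,y)g(y)\ge 0$ forces each term to vanish, so $g(y)=0$ whenever $b(x,y)>0$ — this is clean and is essentially the argument inside the maximum-principle proof above); and second, transferring between the semigroup, the resolvent, and the pointwise equation in a way that is valid for the \emph{general} restriction $L$ of $\wl$, not merely for $\dl$. The resolvent formulation is what makes the pointwise equation $(\wl+\beta)h=\delta_x$ available through Corollary~\ref{regular-coro}, which is why I would carry out the whole argument at the level of $(L+\beta)^{-1}\delta_x$ rather than directly for $e^{-tL}$.
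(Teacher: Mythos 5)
Your proposal is correct and follows essentially the same route as the paper: reduce positivity improvement of the semigroup to that of the resolvent by general principles, then show the non-negative resolvent function cannot have a zero by propagating zeros along edges via the maximum principle (the paper applies its Corollary to the maximum principle directly to $v=(L+1)^{-1}u$ with $(\wl+1)v=u\geq 0$ for arbitrary non-negative $u\not\equiv 0$, while you specialize to $u=\delta_x$ and treat the source vertex separately, which amounts to the same argument). One minor repair: Corollary~\ref{regular-coro} is not available here, since $L$ is not assumed to satisfy $\Condition$, but the pointwise identity $(\wl+\beta)h=\delta_x$ that you need follows immediately from the hypothesis that $L$ is a restriction of $\wl$, so nothing is lost.
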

\begin{proof}
It is clear that the semigroup cannot be positivity improving if the graph is not connected.

Let us now turn to the other implication. Thus, we assume that the graph $(b,c)$ is connected.  By general principles, it suffices to show that the resolvent $(L + 1)^{-1}$ is positivity improving. So, let $u\geq 0$ with $u\not \equiv 0$ be given and consider the function $v:= (L+1)^{-1} u$. Then, $v$ is non-negative as $e^{-tL}$, and thus $(L+1)^{-1}$, is positivity preserving and satisfies $(\wl +1) v =u\geq 0$ as $L$ is a restriction of $\wl$.  Now, the desired positivity follows from the   previous corollary.
\end{proof}

\section{An analogue to a theorem of Li}\label{Analogue}
Whenever $L$ is a non-negative operator on $\ell^2(V,m)$ we can form the associated semigroup $e^{-t L}$. By the discreteness of $V$ these operators have a kernel, i.e., there exists a map
$$ p : [0,\infty)\times V\times V\longrightarrow \CC$$
with
$$ e^{-t L} f(x) = \sum_{y\in V} p_t (x,y) f(y) m(y)$$
for all $f\in \ell^2 (V,m)$.  Thus, with the characteristic function $\delta_x$ of $x\in V$ we obtain
$$ \langle \delta_x, e^{-t L} \delta_y\rangle  = m(x) m(y) p_t (x,y)$$
for all $x,y\in V$.
If $L$ arises from a Dirichlet form, then $p$ must be non-negative with $\sum_{y} p_t (x,y) m(y) \leq 1$.  In this case, estimates  of this kernel are of particular interest.  Some basic estimates are discussed in the main result of this section.  The result is taken from \cite{KLVW}, following \cite{CK, Sim}. We present an alternative proof for part (b), which is is known as Theorem of Li in the context of manifolds (after \cite{Li}).

\bigskip

\begin{theorem}\label{Li}
Let $(V,m)$ be a discrete measure space and $(b,c)$ a connected graph over $(V,m)$.  Let $L$ be a selfadjoint non-negative restriction of $\wl$ such that the associated semigroup $(e^{-t L})_{t\geq 0}$ is positivity preserving.  Furthermore, let $E_0$ be the  infimum of the spectrum of $L$.

(a) There exists a unique non-negative $\Phi$ on $V$ such that
$$e^{t E_0} p_t (x,y) \to \Phi (x) \Phi (y), t\to \infty$$
for all $x,y\in V$. Here, $\Phi \equiv 0$ if $E_0$ is not an eigenvalue and $\Phi$ is the unique normalized positive eigenfunction to $E_0$,  otherwise.

(b)  The kernel $p$ of $e^{-t L}$ satisfies
 $$ \frac{\log p_t (x,y)}{t} \to -E_0, \qquad t\to \infty$$
for all $x,y\in V$.
\end{theorem}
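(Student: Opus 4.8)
The plan is to prove the two parts as a combination of spectral theory and the positivity/connectedness structure established earlier. Throughout I would work with the self-adjoint operator $L$ and its spectral measure, writing $e^{-tL}$ via the spectral theorem. The key objects are the diagonal matrix elements $\langle \delta_x, e^{-tL}\delta_x\rangle = m(x)^2 p_t(x,x)$, which by the spectral theorem equal $\int_{[E_0,\infty)} e^{-t\lambda}\,d\mu_x(\lambda)$ where $\mu_x$ is the spectral measure of $\delta_x$.

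For part (a), I would first analyze $e^{tE_0}p_t(x,x)$. Multiplying by $e^{tE_0}$ gives $\int e^{-t(\lambda-E_0)}\,d\mu_x(\lambda)$, and as $t\to\infty$ this converges by dominated convergence to $\mu_x(\{E_0\})$, the weight of the spectral measure at the bottom of the spectrum. If $E_0$ is not an eigenvalue this is $0$; if it is, then $E_0$ is a simple eigenvalue with a strictly positive normalized eigenfunction $\Phi$ by the positivity-improving property from Theorem~\ref{CPI} (connectedness gives positivity improvement, hence simplicity of the ground state via a Perron--Frobenius argument), and $\mu_x(\{E_0\}) = |\langle \delta_x,\Phi\rangle|^2 m(x)^2$ up to the normalization conventions, which I would match to $m(x)^2\Phi(x)^2$. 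The off-diagonal convergence $e^{tE_0}p_t(x,y)\to\Phi(x)\Phi(y)$ then follows from the same spectral computation applied to $\langle\delta_x,e^{-tL}\delta_y\rangle$, since only the rank-one spectral projection onto $\Phi$ survives in the limit. Setting $\Phi\equiv 0$ in the non-eigenvalue case makes the formula uniform.

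For part (b), I would derive the logarithmic asymptotics. The clean case is when $E_0$ is an eigenvalue: then by part (a), $e^{tE_0}p_t(x,y)\to\Phi(x)\Phi(y)>0$ (using strict positivity of $\Phi$ from connectedness), so $\log p_t(x,y) = -tE_0 + \log(\Phi(x)\Phi(y)) + o(1)$ and dividing by $t$ gives the result immediately. The genuinely delicate case is when $E_0$ is \emph{not} an eigenvalue, since then $e^{tE_0}p_t(x,x)\to 0$ and I cannot read off the exponential rate directly from the limit. Here I would argue via the diagonal elements: a standard upper bound gives $\limsup_t \frac{1}{t}\log p_t(x,x)\le -E_0$ from $\int e^{-t\lambda}d\mu_x \le e^{-tE_0}\mu_x([E_0,\infty))$. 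For the matching lower bound, I would use that $E_0$ lies in the spectrum, so $\mu_x([E_0,E_0+\varepsilon))>0$ for every $\varepsilon>0$ (at least for some $x$, and then for all $x$ by connectedness and positivity), whence $\int e^{-t\lambda}d\mu_x \ge e^{-t(E_0+\varepsilon)}\mu_x([E_0,E_0+\varepsilon))$, giving $\liminf_t \frac{1}{t}\log p_t(x,x)\ge -(E_0+\varepsilon)$; letting $\varepsilon\to 0$ yields the diagonal statement.

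Passing from the diagonal to off-diagonal entries is where connectedness does its essential work, and I expect this to be the main obstacle. The idea is a chaining argument using the semigroup property: $p_{t+s+r}(x,y)\ge m(z_1)m(z_2)\,p_t(x,z_1)\,p_s(z_1,z_2)\,p_r(z_2,y)$ (dropping all other non-negative terms, which is legitimate since the kernel is non-negative for a Dirichlet-form semigroup). By connectedness one can link any $x,y$ through a path, and the single-step factors $p_s(z_1,z_2)$ are strictly positive for small fixed $s$ by the positivity-improving property, contributing only bounded (in $t$) corrections. Combined with the diagonal asymptotics this sandwiches $\frac{1}{t}\log p_t(x,y)$ between $-E_0$ from above (by a Cauchy--Schwarz / semigroup bound $p_t(x,y)\le \sqrt{p_t(x,x)p_t(y,y)}\,$, up to measure factors) and $-E_0$ from below, so that the limit equals $-E_0$ for all pairs $x,y$. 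The careful bookkeeping of the measure weights $m(\cdot)$ and the verification of the upper Cauchy--Schwarz-type bound are the routine-but-necessary technical points I would need to discharge.
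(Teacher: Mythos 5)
Your proposal is correct, and part (a) is essentially the paper's own argument: spectral theorem plus dominated convergence, with positivity improvement (Theorem~\ref{CPI}) supplying simplicity and strict positivity of the ground state when $E_0$ is an eigenvalue. In part (b), however, you take a genuinely different route. The paper makes no case distinction according to whether $E_0$ is an eigenvalue: it introduces $a_t(x,y)=\langle\tilde{\delta}_x,e^{-tL}\tilde{\delta}_y\rangle$ with $\tilde{\delta}_x=\delta_x/\sqrt{m(x)}$, derives the supermultiplicativity $a_{t+s}(x)\geq a_t(x)a_s(x)$ by expanding the semigroup in the orthonormal basis $\{\tilde{\delta}_y\}$ and discarding all but one non-negative term, obtains existence of $\lim_{t\to\infty}\frac{1}{t}\log a_t(x)$ by Fekete-type subadditive reasoning, and then uses the two-sided chaining inequalities $a_{t-1}(x)a_1(x,y)\leq a_t(x,y)\leq a_{t+1}(y)/a_1(y,x)$ (strict positivity of the $a_1$ terms coming from positivity improvement) to conclude that the limit is independent of $(x,y)$; the identification of this common limit with $-E_0$ is then compressed into one terse sentence resting on totality of the $\delta_x$. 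You instead prove the diagonal asymptotics directly via two-sided Laplace estimates on the spectral measures $\mu_x$: the upper bound from $\support\mu_x\subseteq[E_0,\infty)$, the lower bound from non-vanishing of the spectral projection $E([E_0,E_0+\varepsilon))$ at some point $x_0=x_0(\varepsilon)$, transferred to all points and to off-diagonal entries by the same chaining mechanism together with the Cauchy--Schwarz bound $p_t(x,y)\leq\sqrt{p_t(x,x)p_t(y,y)}$, and you dispose of the eigenvalue case as an immediate corollary of part (a). Both arguments rest on the same two pillars (positivity improvement from connectedness, and dropping non-negative terms in the Chapman--Kolmogorov sum), but yours avoids Fekete's lemma and makes the identification of the limiting constant completely transparent --- in effect it spells out exactly what the paper's final step ``$E=E_0$'' is hiding --- at the price of a case split and of the dependence of $x_0$ on $\varepsilon$, which is harmless since the resulting liminf bound at a fixed $x$ holds for every $\varepsilon>0$. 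One cosmetic slip: the non-negativity of the kernel follows already from the standing hypothesis that $(e^{-tL})_{t\geq 0}$ is positivity preserving; no appeal to a Dirichlet form is needed (and none is assumed in the theorem).
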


\begin{proof}
\smallskip

Part (a) can  be obtained as a simple  consequence of the spectral theorem (see \cite{Sim,KLVW}) as follows:
Let $P$ be the projection onto the eigenspace of $E_0$, i.e., $P =0$ if $E_0$ is not an eigenvalue and $P = \langle \Phi, \cdot \rangle \Phi$ otherwise.

 Then, the spectral theorem gives
 \begin{eqnarray*}
m(x) m(y)   | e^{t E_0} p_t (x,y) - \Phi (x) \Phi (y) |& = &  |\langle {\delta}_x, ( e^{t E_0} e^{- t L} - P) {\delta}_y\rangle|  \\
  & = & \left| \int_{[E_0,\infty)} (e^{- t (s -E_0)} - 1_{\{E_0\}} (s) )d\rho (s) \right|\\
  & \to &  0.
  \end{eqnarray*}

Here, $\rho$ is the spectral measure of $L$ associated to ${\delta}_x$ and ${\delta}_y$ characterized by
$$\langle  {\delta}_x, f(L) {\delta}_y\rangle = \int f (s) d\rho (s)$$
for all continuous bounded real valued  $f$ on $\R$.

\medskip

(b)  Let $\tilde{\delta}_x$ with
$$\tilde{\delta}_x (y) = \frac{1}{\sqrt{m(x)} }\delta_x(y)$$
be given, where $\delta_x$ is the characteristic function of $\{x\}$.
  Obviously, $\{\tilde{\delta}_x\}_{x\in V}$ forms an orthonormal basis of $\ell^2 (V,m)$ consisting of non-negative functions which do not vanish identically.  As $(b,c)$ is connected, we infer, from Theorem~\ref{CPI}, that the semigroup $(e^{-t L})_{t\geq 0}$ is positivity improving. Thus, for all $t>0$ and $x,y\in V$, the numbers
$$a_t (x,y):=\langle \tilde{\delta}_x, e^{-t L} \tilde{\delta_y} \rangle,\:\; a_t (x):= a_t (x,x)$$
are positive and satisfy
$$ a_{t + s} (x) = \langle e^{- t L}  \tilde{\delta_x}, e^{- s L} \tilde{\delta_x}\rangle = \sum_{y\in V} \langle e^{ - t L} \tilde{\delta}_x, \tilde{\delta}_y\rangle \langle \tilde{\delta}_y, e^{- s L} \tilde{\delta_x}\rangle \geq a_t (x) a_s (x).$$
By a similar reasoning,
$$a_{t-1} (x) a_1 (x,y) \leq a_t (x,y) \leq \frac{1}{a_{1} (y,x)} {a_{t+1} (y)}$$
for all $x,y\in V$ and $t>1$.
The first inequality  gives the existence of
$$ \lim_{t\to \infty} \frac{\log a_t (x)}{t} $$
for each $x\in V$ by standard subadditive reasoning. The second line of inequalities then gives that this limit does not depend on $x$ and, in fact,
$$\lim_{t\to \infty} \frac{\log a_t (x,y)}{t} = E$$
holds with some real  $E$ for all $x,y\in V$.
As  $ \sqrt{ m(x) m(y)}   p_t (x,y) = a_t (x,y)$, we obtain the convergence of $\frac{\log p_t (x,y)}{t}$ to $E$ for $t\to \infty$ as well. As this holds for all $x,y\in V$, we obtain  $E= E_0$.
\end{proof}

\begin{coro} Let $(V,m)$ be a discrete measure space and $(b,c)$ a connected graph over $(V,m)$. Let $L$ be a selfadjoint non-negative restriction of $\wl$ such that the associated semigroup $(e^{-t L})_{t\geq 0}$ is positivity preserving. Assume that $m(V)=\infty$.   Then, the heat kernel associated to $L$ satisfies
\[p_t(x,y) \to 0, \ t\to \infty.\]
\end{coro}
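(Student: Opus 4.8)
The plan is to read the result off Theorem~\ref{Li}(a), which gives $e^{tE_0}p_t(x,y)\to\Phi(x)\Phi(y)$ as $t\to\infty$, and then to eliminate the only obstruction to $p_t(x,y)\to 0$. Writing $p_t(x,y)=e^{-tE_0}\bigl(e^{tE_0}p_t(x,y)\bigr)$, the second factor converges to the finite limit $\Phi(x)\Phi(y)$ and hence stays bounded. Consequently, if $E_0>0$, the prefactor $e^{-tE_0}$ tends to $0$ and we are done at once, irrespective of whether $E_0$ is an eigenvalue. The whole content of the corollary therefore lies in the case $E_0=0$, where the prefactor equals $1$ and $p_t(x,y)\to\Phi(x)\Phi(y)$; here I must show that this limit is $0$, that is, that $\Phi\equiv0$, i.e.\ that $0$ is not an eigenvalue of $L$.

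So suppose, for contradiction, that $E_0=0$ is an eigenvalue. By Theorem~\ref{Li}(a), $\Phi$ is then the unique normalized positive eigenfunction; in particular $\Phi\in\ell^2(V,m)$, $\Phi>0$, $\|\Phi\|=1$, and $L\Phi=0$. Since $L$ is a restriction of $\wl$, this gives $\wl\Phi=0$, so $\Phi$ is a strictly positive solution of $\wl\Phi\le 0$. I would now invoke the maximum principle of Section~\ref{Maximum}: if $\Phi$ attains its maximum, that maximum is positive, whence by connectedness $\Phi$ is constant (and, were $c\not\equiv0$, even $\Phi\equiv0$). A positive constant function lies in $\ell^2(V,m)$ only when $m(V)<\infty$; since we assume $m(V)=\infty$, this forces $\Phi\equiv0$, contradicting $\|\Phi\|=1$. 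This would complete the proof.

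The step that requires genuine care, and which I expect to be the main obstacle, is the assumption that $\Phi$ attains its maximum, since $\Phi$ need only be square-integrable, not bounded, and $m$ may tend to $0$ at infinity. Note first that $m(V)=\infty$ together with $\Phi\in\ell^2(V,m)$ forces $\inf_x\Phi(x)=0$, so $\Phi$ cannot be bounded below by a positive constant; and $\wl\Phi=0$ yields the subaverage property $\Phi(x)\le\sup\{\Phi(y):b(x,y)>0\}$, ruling out strict interior maxima but not settling attainment of the supremum. To make the argument rigorous in the general (possibly non-locally-finite) setting, I would replace the pointwise maximum principle by a ground-state/cut-off computation: using the integration-by-parts identity \eqref{PI} with test functions $\eta^2\Phi\in C_c(V)$ for a suitable exhausting family of cut-offs $\eta$, one pairs $0=\langle\wl\Phi,\eta^2\Phi\rangle$ and, via the Leibniz splitting
\[
(\Phi(x)-\Phi(y))(\eta(x)^2\Phi(x)-\eta(y)^2\Phi(y))=\tfrac{\eta(x)^2+\eta(y)^2}{2}(\Phi(x)-\Phi(y))^2+\tfrac12(\Phi(x)^2-\Phi(y)^2)(\eta(x)^2-\eta(y)^2),
\]
one aims to show that the Dirichlet energy $\tfrac12\sum_{x,y}b(x,y)|\Phi(x)-\Phi(y)|^2+\sum_x c(x)\Phi(x)^2$ vanishes. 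Connectedness then forces $\Phi$ to be constant with $c\equiv0$, and one concludes as before. The delicate point is the vanishing of the boundary term $\tfrac12\sum_{x,y}b(x,y)(\Phi(x)^2-\Phi(y)^2)(\eta(x)^2-\eta(y)^2)$ along the cut-offs, which is where square-integrability of $\Phi$ and the finiteness of $\sum_y b(x,y)$ must be exploited.
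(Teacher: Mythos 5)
Your reduction is exactly the paper's: both invoke Theorem~\ref{Li}(a), dispose of the case $E_0>0$ at once, and reduce the case $E_0=0$ to showing that $0$ is not an eigenvalue of $L$ when $m(V)=\infty$. The genuine gap is that you never establish this last claim, and you say so yourself. Your first route (maximum principle) fails for precisely the reason you identify: a positive $\ell^2$-harmonic function need not attain its supremum, so the theorem of Section~\ref{Maximum} does not apply. Your second route (cut-offs $\eta^2\Phi$ in \eqref{PI} plus the Leibniz splitting) is only a programme, whose decisive step --- the vanishing of the boundary term
$\tfrac12\sum_{x,y}b(x,y)\bigl(\Phi(x)^2-\Phi(y)^2\bigr)\bigl(\eta(x)^2-\eta(y)^2\bigr)$
along an exhaustion --- is left unproven. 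For comparison, the paper settles the point in one line: $m(V)=\infty$ gives $1\notin\ell^2(V,m)$, hence ``$0$ is not an eigenvalue,'' implicitly taking for granted that an eigenfunction at $0$ would have to be constant. So your attempt is structurally identical to the paper's proof but stops exactly where all the content lies.

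You should also know that the step you are stuck on is not merely delicate but cannot be carried out in the stated generality, so no choice of cut-offs will finish your argument: positivity preservation of $(e^{-tL})_{t\ge0}$ does not tie the form $Q$ of $L$ to the graph energy at all (this is the pitfall flagged in Remark~(a) following Theorem~\ref{char-Neumann}). Concretely, on the $3$-regular tree with $b\in\{0,1\}$, $c\equiv0$, let $h=2^{-\beta}>0$ be the non-constant harmonic function built from the Busemann function $\beta$ of a fixed end, and choose $m$ with $m(V)=\infty$ but $\sum_x h(x)^2m(x)<\infty$ (possible since $h\to0$ off the end). Conjugating the Neumann Laplacian of the ground-state-transformed graph $b_h(x,y)=b(x,y)h(x)h(y)$ over $(V,h^2m)$ by the unitary $f\mapsto hf$ produces a non-negative selfadjoint restriction $L$ of $\wl$ (the computation uses $\wl h=0$) whose semigroup is positivity preserving and which satisfies $Lh=0$. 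Here $0$ is an eigenvalue with strictly positive, non-constant eigenfunction, your boundary terms do not vanish ($\qn(h,h)>0$ while $Q(h,h)=0$), and Theorem~\ref{Li}(a) gives $p_t(x,y)\to h(x)h(y)/\|h\|^2\neq0$. Thus the claim ``$m(V)=\infty$ implies $0$ is not an eigenvalue'' genuinely needs more than positivity preservation --- the paper's one-line justification glosses over this as well. The natural hypothesis that closes the gap is Markovianity: if $Q$ is a Dirichlet form (and the graph is locally finite), Theorem~\ref{char-Neumann} gives $\qn\le Q$, so $L\Phi=0$ yields $\qn(\Phi,\Phi)\le Q(\Phi,\Phi)=\langle L\Phi,\Phi\rangle=0$, whence $\Phi$ is constant by connectedness and $\Phi\equiv0$ because $m(V)=\infty$. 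Your instinct that this step is the real content of the corollary is correct; what is missing is the recognition that it cannot be closed under the stated assumptions and which additional hypothesis repairs it.
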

\begin{proof}
Since $m(V) = \infty$ we have that $1\not\in \ell^2(V,m)$ and, in particular, that $1\not\in D(Q)$ and $0$ is not an eigenvalue. Now, if $E_0>0$, then, by the previous theorem, we get that $p_t(x,y) \to 0$ as $t\to\infty$. Otherwise, if $E_0=0$, then the theorem gives that $p_t(x,y) \to \Phi(x)\Phi(y)$ as $t\to\infty$. But $\Phi\equiv 0$ since $0$ is not an eigenvalue.
\end{proof}

 \textbf{Remark.} 
Let us emphasize that the assumptions in the preceding results  do not entail any form of compact resolvent or spectral gap. Accordingly, one can also not expect any form of exponential rate of convergence in the statements. For situations in which such assumptions and consequences hold we refer to the monograph \cite{EN}.

\section{Uniqueness of selfadjoint restrictions  of $\wl$} \label{Boundedness}
This section is concerned with  two situations in which there is only one selfadjoint restriction  of $\wl$. This complements the material of the previous   sections whose main thrust is the study of situations in which there are several selfadjoint restrictions of $\wl$.

\bigskip

We start with the following direct consequence of Proposition~\ref{maximaloperator}.

\begin{prop} \label{esa-prop} Let  $(b,c)$ be  a graph over the discrete measure space $(V,m)$ and $\wl = \wl_{(b,c,m)}$.  If $\mathrm{(FC)}$ holds and the restriction $L_c$ of $\wl$ to $C_c (V)$ is essentially selfadjoint with selfadjoint extension $L$, then $L$ is the only Laplacian associated to $(b,c)$.
\end{prop}

Now, we discuss two cases in which the proposition can be applied.  One case will involve an assumption on $m$ only  and the other case  will involve combined assumptions on $m$ and $(b,c)$.

\smallskip

\begin{coro}  Let $(V,m)$ be a discrete measure space with $\inf_{x\in V} m(x) >0$. Then, $\mathrm{(FC)}$ holds for any graph $(b,c)$ over $(V,m)$  and
  the restriction $L_c$ of $\wl$ to $C_c (V)$ is essentially selfadjoint with selfadjoint extension $L$.
\end{coro}
\begin{proof}  This follows from the previous proposition  by Theorem 6 of \cite{KL1} and its subsequent remark. Namely, as shown there, $\wl$ maps $C_c (V) $ to $\ell^2 (V,m)$ and the restriction of $\wl$ to $C_c (V)$ is essentially selfadjoint whenever $\inf_{x\in V}  m(x) >0$.
\end{proof}

\textbf{Remark.} As a consequence of the corollary the  main focus of the previous sections  concerns the case when $\inf_{x\in V} m(x) =0$.

\smallskip

Another instance of uniqueness of selfadjoint restrictions of $\wl$ is given if $\wl$ is a bounded operator. In this context we first discuss  an interesting feature of $\wl$ as an operator on $\ell^p (V,m)$: It is  either bounded for all $p\geq 1$ or for no such $p$.

\begin{theorem}\label{char-boundedness} Let $(V,m)$ be a discrete measure space, $(b,c)$ a graph over $(V,m)$ and $\wl=\wl_{(b,c,m)}$. Then, the following are equivalent:

\begin{itemize}

\item[(i)]  There exists a $C\geq 0$ with  $\frac{1}{m(x)} \left( \sum_y b(x,y) + c(x)\right) \leq C$ for all $x\in V$.

\item[(ii)]  $\wl$ gives a bounded operator on $\ell^p (V,m)$ for some $p\in [1,\infty]$.

\item[(iii)] $\wl$ gives a bounded operator on $\ell^p (V,m)$ for all $p\in [1,\infty]$.

\end{itemize}
\end{theorem}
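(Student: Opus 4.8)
The plan is to prove the cycle (i)$\Rightarrow$(iii)$\Rightarrow$(ii)$\Rightarrow$(i), with (iii)$\Rightarrow$(ii) being trivial and the content lying in the other two implications. Throughout I abbreviate $B_x := \sum_{y} b(x,y) + c(x)$, so that condition (i) reads $B_x/m(x) \le C$ for all $x\in V$. The starting observation is that $\wl$ splits as $\wl = D - A$, where $D$ is the diagonal (multiplication) operator $Du(x) = (B_x/m(x))\,u(x)$ and $A$ is the off-diagonal part $Au(x) = \frac{1}{m(x)}\sum_{y} b(x,y)u(y)$.

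For (i)$\Rightarrow$(iii) I would treat the two pieces separately. The diagonal part is immediate: under (i), $D$ is multiplication by a function bounded by $C$, hence is bounded on every $\ell^p(V,m)$, $1\le p\le\infty$, with norm at most $C$. For $A$ I would first establish the two endpoint bounds directly. On $\ell^\infty(V)$ one has $|Au(x)| \le \frac{1}{m(x)}\sum_y b(x,y)\|u\|_\infty \le C\|u\|_\infty$ using $\sum_y b(x,y)\le B_x$, while on $\ell^1(V,m)$ the symmetry $b(x,y)=b(y,x)$ gives
\[
\|Au\|_1 \le \sum_x\sum_y b(x,y)|u(y)| = \sum_y |u(y)|\sum_x b(x,y) \le \sum_y |u(y)|\,B_y \le C\|u\|_1 ,
\]
which also shows that the defining sum converges for each $x$ when $u\in\ell^1$. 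Having the $\ell^1$ and $\ell^\infty$ bounds, the Schur test (equivalently Riesz--Thorin interpolation) yields boundedness of $A$ on every intermediate $\ell^p(V,m)$ with norm at most $C$, together with absolute convergence of the defining sums, so that the relevant $u$ lie in $\widetilde{F}$. Combining, $\wl = D - A$ is bounded on all $\ell^p(V,m)$.

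The implication (ii)$\Rightarrow$(i) is the slick part and recovers (i) by testing on the indicator functions $\delta_x$. Using the identity $\wl\delta_x(z) = \frac{1}{m(z)}\bigl(B_z\delta_x(z) - b(x,z)\bigr)$ from the proof of Theorem~\ref{regular}, one has in particular $\wl\delta_x(x) = B_x/m(x)$ since $b(x,x)=0$. Hence, for $1\le p<\infty$,
\[
\|\wl\delta_x\|_p^p \ge |\wl\delta_x(x)|^p\, m(x) = \left(\frac{B_x}{m(x)}\right)^p m(x) = \left(\frac{B_x}{m(x)}\right)^p \|\delta_x\|_p^p ,
\]
and for $p=\infty$ one has $\|\wl\delta_x\|_\infty \ge B_x/m(x) = (B_x/m(x))\|\delta_x\|_\infty$. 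In either case, boundedness of $\wl$ with operator norm $C$ forces $B_x/m(x)\le C$ for every $x$, which is exactly (i).

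The main obstacle is purely technical and confined to (i)$\Rightarrow$(iii): ensuring that $A$ is genuinely defined, i.e.\ that $\sum_y b(x,y)u(y)$ converges absolutely, on all of $\ell^p$ for $1<p<\infty$ rather than merely on $C_c(V)$. The Schur test is the cleanest device here, since working first with $|u|$ and Tonelli gives $\|A|u|\,\|_p \le C\,\||u|\,\|_p$, and finiteness of the left-hand side forces the inner sum to converge at each point (each point carrying positive measure). The remaining steps are routine.
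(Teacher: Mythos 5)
Your proof is correct. For (i)$\Rightarrow$(iii) you follow essentially the paper's template --- endpoint bounds at $p=1,\infty$ plus Riesz--Thorin --- except that the paper treats $\wl$ as a whole (bounding it on $\ell^\infty$ directly and getting $\ell^1$ by symmetry and duality), whereas you split $\wl = D - A$ and prove the $\ell^1$ endpoint for $A$ directly via Tonelli and the symmetry of $b$; your variant has the merit of addressing explicitly the domain issue (absolute convergence of $\sum_y b(x,y)u(y)$ for all $u\in\ell^p$), which the paper passes over in silence. The genuine difference is in (ii)$\Rightarrow$(i): writing $B_x = \sum_y b(x,y) + c(x)$, the paper dualizes to get boundedness on the conjugate $\ell^q$, interpolates once more to reach $\ell^2$, and then tests the quadratic form via $\langle \wl \delta_x, \delta_x\rangle \le C m(x)$; you instead observe that the diagonal entry $\wl\delta_x(x) = B_x/m(x)$ forces $\|\wl\delta_x\|_p \ge (B_x/m(x))\,\|\delta_x\|_p$ for every $p\in[1,\infty]$, so that boundedness on any single $\ell^p$ yields (i) immediately, with no duality and no interpolation. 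Your route is more elementary and self-contained, and it also sidesteps a small subtlety in the paper's duality step when $p=\infty$ (the adjoint of an operator on $\ell^\infty$ acts a priori on $(\ell^\infty)^*$, which strictly contains $\ell^1$, so preservation of $\ell^1$ needs the kernel structure); what the paper's route buys is that everything stays inside the symmetric $\ell^2$ framework used throughout the rest of the article, at the cost of invoking interpolation twice.
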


\begin{proof}  We begin the proof with an auxiliary claim:

\smallskip

\textit{Claim.} If $\ell^p (V,m)$ is a subset of $\widetilde{F}$ and the restriction of $\wl$ to $\ell^p (V,m)$ is a bounded operator on $\ell^p (V,m)$ for some $p$ with $1\leq p \leq \infty$ and $q$ satisfies $1 = 1/p  + 1/q$,  then  $\ell^q (V,m)$ also belongs to $\widetilde {F}$ and the restriction of $\wl$ to $\ell^q (V,m)$ is a bounded operator as well.

\textit{Proof of the claim.} This is essentially a consequence of \eqref{PI} and  duality. We consider the cases $p = 1$ and $p > 1 $ separately.

To treat $p=1$ we note that $\ell^\infty (V)$ belongs to $\widetilde{F}$ anyway. From \eqref{PI} we then infer  for $u\in \ell^\infty (V)$ and $v\in C_c (V)$ the estimate
$$\left|\sum_x (\wl  u) (x) v (x) m(x)\right| = \left| \sum_x u(x) \wl v (x) m(x)\right| \leq  C \|u\|_\infty \|v\|_1,$$
where $\| \cdot\|_s$ denotes the $s$-norm on $\ell^s (V,m)$ and $C$ is a bound for the norm of $\wl$ as an operator from $\ell^1 (V,m)$ to $\ell^1 (V,m)$. As $C_c (V)$ is dense in $\ell^1 (V,m)$ we infer that $\wl$ is a bounded operator on $\ell^\infty (V)$ (with norm bounded by $C$ as well).

To treat the case $p>1$ we chose $u \in C_c (V) \subset \ell^q (V,m)$ and $v\in \ell^p (V,m)$. Then, \eqref{PI} gives the estimate
$$\left|\sum_x (\wl  u) (x) v (x) m(x)\right| = \left| \sum_x u(x) \wl v (x) m(x)\right| \leq  C \|u\|_q \|v\|_p,$$
where $C$ is a bound for the norm of $\wl$ as an operator from $\ell^p (V,m)$ to $\ell^p (V,m)$. This shows that $\wl u$ belongs to $\ell^q (V,m)$ and satisfies  $\|\wl u\|_q \leq C \|u\|_q$. Hence, the restriction of $\wl$ to $C_c (V)$ is a bounded operator (with respect to $\|\cdot\|_q$). It can then be (uniquely) extended to a bounded operator on $\ell^q (V,m)$ and this extension can easily be seen to be a restriction of $\wl$. This finishes the proof of the claim.

\medskip

Let us now turn to the actual proof.  Assume that (i) is satisfied. Then, we see that $\wl$ is a bounded operator on $\ell^\infty$ which gives (ii). The auxiliary claim now  gives  that $\wl$ is bounded on $\ell^1$. Applying the Riesz-Thorin interpolation  theorem, we get (iii).\\
Assume, conversely, that (ii) is fulfilled. Again by the  auxiliary claim, $\wl$ is also a bounded operator on $\ell^q$, where $1=\frac{1}{p} + \frac{1}{q}$. As $2$ must belong to the interval between $p$ and $q$ we can use interpolation once more, to  get that $\wl$ is bounded on $\ell^2$. Hence, for each $x\in V$, we have the existence of $C\ge0$ such that
\[\langle \wl \delta_x, \delta_x\rangle \leq C  m(x),\]
which gives (i).
\end{proof}

\textbf{Remark.} The equivalence of (i) and boundedness of $\wl$ on $\ell^2 (V,m)$ has been shown in \cite{KL2}. There it has also been shown that this implies boundedness of $\wl$ on all $\ell^p (V,m)$.

\begin{coro} Let $(V,m)$ be a discrete measure space, $(b,c)$ a graph over $(V,m)$ and $\wl=\wl_{(b,c,m)}$. If there exists a $C\geq 0$ with
$$\frac{1}{m(x)} \left( \sum_{y\in V} b(x,y) + c(x)\right) \leq C$$
for all $x\in V$, then there exists only one selfadjoint  Laplacian  associated to $(b,c)$. This Laplacian is  the restriction of $\wl$ to $\ell^2 (V,m)$.
\end{coro}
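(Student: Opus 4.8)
The plan is to use Theorem~\ref{char-boundedness} to collapse the whole question to a statement about a single bounded operator. The hypothesis is precisely condition (i) of that theorem, so $\wl$ gives a bounded operator $A$ on $\ell^2(V,m)$. Here $A$ is the restriction of $\wl$ to its natural domain $\{u\in\ell^2(V,m):\wl u\in\ell^2(V,m)\}$, which under the hypothesis is all of $\ell^2(V,m)$; thus $A$ is everywhere defined and bounded, and $Au=\wl u$ for every $u\in\ell^2(V,m)$.

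First I would show that $A$ is selfadjoint. Since $A$ is bounded and everywhere defined, it suffices to check that $A$ is symmetric. For $u,v\in C_c(V)$ the integration-by-parts identity \eqref{PI} gives $\langle\wl u,v\rangle=\langle u,\wl v\rangle$, that is, $\langle Au,v\rangle=\langle u,Av\rangle$. As $C_c(V)$ is dense in $\ell^2(V,m)$ and $A$ is bounded, this identity extends by continuity to all $u,v\in\ell^2(V,m)$, so $A=A^\ast$.

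Next, let $L$ be any selfadjoint Laplacian associated to $(b,c)$, i.e.\ any selfadjoint restriction of $\wl$. Then $D(L)\subseteq\ell^2(V,m)$ and $Lu=\wl u=Au$ for all $u\in D(L)$, so $L\subseteq A$. Taking adjoints reverses the inclusion, and since both $A$ and $L$ are selfadjoint we obtain $A=A^\ast\subseteq L^\ast=L$. Together with $L\subseteq A$ this forces $L=A$. Hence $A$, the restriction of $\wl$ to $\ell^2(V,m)$, is the only selfadjoint Laplacian associated to $(b,c)$, which is the assertion. Alternatively, one could route the argument through Proposition~\ref{esa-prop}: the hypothesis forces \textup{(FC)} (a bounded $\wl$ certainly maps $C_c(V)$ into $\ell^2(V,m)$), and $L_c$, being the restriction of the bounded selfadjoint $A$ to the dense subspace $C_c(V)$, has closure equal to $A$ and is therefore essentially selfadjoint, so Proposition~\ref{esa-prop} yields uniqueness directly.

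The only genuinely technical point is the passage from symmetry of $\wl$ on $C_c(V)$, encoded in \eqref{PI}, to symmetry on all of $\ell^2(V,m)$; this is exactly where boundedness of $A$ together with the density of $C_c(V)$ is used. Everything else is the standard observation that a selfadjoint operator contained in an everywhere-defined bounded selfadjoint operator must coincide with it, so I expect no substantial obstacle beyond invoking Theorem~\ref{char-boundedness} correctly.
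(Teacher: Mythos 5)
Your proof is correct and follows essentially the same route as the paper: invoke Theorem~\ref{char-boundedness} to get that the restriction of $\wl$ to $\ell^2(V,m)$ is a bounded (and, as you verify, selfadjoint) operator, and then use that any Laplacian associated to the graph is by definition a selfadjoint restriction of $\wl$, hence must coincide with this bounded operator. The paper leaves the symmetry of the bounded restriction and the maximality argument ($L\subseteq A$ with $L$ selfadjoint and $A$ bounded selfadjoint forces $L=A$) implicit; you have simply spelled out those details.
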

\begin{proof} By definition, any Laplacian associated to the graph is a restriction of $\wl$. As the restriction of $\wl$ to $\ell^2 (V,m)$ is bounded by the assumption and the previous theorem, the statement follows.
\end{proof}

\textbf{Remark.} Let us point out that the corollary gives examples  with uniqueness of the selfadjoint operator associated to $\wl$ even if $\inf_{x\in V} m(x) =0$.

\appendix
\section{A (Counter)example}\label{Counter}
In this section we briefly recall an example from Section $4$ of \cite{KL1}. This serves as a (counter) example in various situations discussed in the article.

\bigskip

We consider connected graphs $(b,c)$ over $(V,m)$ with $c\equiv 0$ and $b(x,y)\in \{0,1\}$ for all $x,y\in V$. The Cheeger constant $\alpha = \alpha (V,b,c)$ of such a graph is  defined by
$$\alpha :=\inf_{K\subseteq V : \sharp K <\infty} \frac{\qd(1_K, 1_K)}{\sharp K},$$
where $\sharp K$ denotes the cardinality of $K$.  The degree $D : V\longrightarrow [0,\infty)$ is defined by $D(x) = \sum_{y\in V} b(x,y)$.  Then, as discussed by Dodziuk-Kendall \cite{DK} (see \cite{Dod,Kel, KL2} as well) in the context of isoperimetric inequalities, the inequality
$$\frac{1}{2} \sum_{x,y\in V} b(x,y) (\varphi (x) - \varphi (y))^2  \geq \frac{\alpha^2}{2} \sum_{x\in V} D(x) \varphi (x)^2$$
holds for all real-valued $\varphi \in C_c (V)$.  Now, take an arbitrary graph with $\alpha >0$ and  $D(x)\leq C$ for all $x\in V$.  To be specific, one may take the regular tree with degree  $k\geq 3$.  Choose a measure $m$ on $V$ with $m(V) =1$. Then, the graph $(V,b,0,m)$ has the following features:

\begin{enumerate}
\item The graph $(V,b,0,m)$ is complete with respect to the metric $d$ defined in Section \ref{Forms}.
\item The constant function $1$ belongs to $D (\qn)$ but not to $D (\qd)$.
\end{enumerate}
Here, $(1)$ is clear as $D$ is uniformly bounded. Claim (2) is shown in \cite{KL1}. We include a short \textit{proof}: Let $1$ be  the constant function with value one.  As $m(V)<\infty$,  the function $1$ belongs to $\ell^2 (V,m)$.  Obviously, $\qn(1) := \qn (1,1)=0<\infty$. Thus, $1\in D(\qn)$. Now, fix $x_0\in V$ and let $\varphi_n$ be any sequence in $C_c (V)$ converging to $1$ in $\ell^2 (V,m)$. Then, $\varphi_n (x_0)$ converges to $1$. In particular,
$$\qn (\varphi_n) \geq  \frac{\alpha^2}{2} D (x_0) \varphi_n (x_0)^2\to \frac{\alpha^2}{2} D(x_0)>0, n\to \infty.$$
Thus, $\qn (\varphi_n)$ does not converge to $0   = \qn(1)$ and $1$ does not belong to $D(\qd)$.

\section{Dirichlet forms on real and complex Hilbert spaces} \label{real}
In this appendix we shortly discuss some basic and very well-known characterizations of Dirichlet forms.
 Such forms can be considered on both real and complex Hilbert spaces. Here, we show, in particular,  how to go from one situation to the other. We refer to  \cite{BH,Fuk,Ou} for further details. 

\bigskip

Let $(X,m)$ be a $\sigma$-finite measure space. Let $L^2 (X,m)$ be the Hilbert space of square integrable function on $X$ with inner product
$$\langle f, g\rangle = \int_X f (x) \overline{g(x)} d m(x).$$
As discussed in the main text, a  non-negative closed symmetric form $Q$ on $L^2 (X,m)$ with domain $D(Q)$ is called a Dirichlet form (on $L^2 (X,m)$)  if
$$Q ( C u, Cu )\leq Q (u,u)$$
for all $u\in L^2 (X,m)$ and every normal contraction $C$ on $\CC$.   Here, we set  $Q(v,v) = \infty$ if $v$ does not belong to the form domain.

In order to state the next theorem, we need some further notation: For a real valued $u$ we set
$$u_+ :=\max\{0,u\}\;\: \mbox{and}\;\: u \wedge 1 :=\min\{ 1, u\}.$$
 Then, the  following theorem holds.

\begin{theorem} Let $Q$ be a non-negative closed symmetric form  on $L^2 (X,m)$. Then, the following assertions are equivalent:

\begin{itemize}
\item[(i)] $Q$ is a Dirichlet form.

\item[(ii)] The semigroup $e^{-t L}$ is positivity preserving and contracting on $L^p (X,m)$ for all $1 \leq p \leq \infty$.
\item[(iii)]  $D(Q)$ is invariant under complex conjugation with $Q(u,v)$  real valued for all real valued $u,v$ in the domain of $Q$ and it holds that
$Q (u_+, u_+) \leq Q(u,u)$ for all real valued $u\in L^2 (X,m)$ as well as  $Q ( u \wedge 1, u \wedge 1) \leq Q (u,u)$ for all real valued $u\in L^2 (X,m)$ with $u \geq 0$.
\end{itemize}
\end{theorem}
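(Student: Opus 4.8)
The plan is to establish the cycle (i) $\Rightarrow$ (ii) $\Rightarrow$ (iii) $\Rightarrow$ (i), reducing matters throughout to real-valued functions. The reduction rests on the observation that complex conjugation $z \mapsto \bar z$ is itself a normal contraction. Hence, as soon as $Q$ is a Dirichlet form, $D(Q)$ is invariant under conjugation and $Q(\bar u, \bar u) \le Q(u,u)$; applying the same bound to $\bar u$ forces the equality $Q(\bar u, \bar u) = Q(u,u)$, and polarization then yields $Q(\bar u, \bar v) = \overline{Q(u,v)}$. In particular $Q$ is real-valued on the real subspace $D(Q)_\RR$ of real elements and is completely determined there by sesquilinearity. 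This passage from the complex form to its real part is precisely the mechanism that lets me import the classical real-variable arguments of \cite{Fuk} into the complex setting, which is the purpose of this appendix.

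For (i) $\Rightarrow$ (ii) I would invoke the two Beurling--Deny correspondences. Since $u \mapsto u_+$ is a normal contraction, $Q(u_+,u_+) \le Q(u,u)$, which is equivalent to positivity preservation of $e^{-tL}$; since $u \mapsto u \wedge 1$ is a normal contraction, the sub-Markovian inequality $Q(u \wedge 1, u \wedge 1) \le Q(u,u)$ holds, which together with positivity preservation is equivalent to $L^\infty$-contractivity. Contractivity on $L^1(X,m)$ then follows by self-adjointness and duality, contractivity on $L^2(X,m)$ from $L \ge 0$, and contractivity on the remaining $L^p(X,m)$ from the Riesz--Thorin interpolation theorem. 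For (ii) $\Rightarrow$ (iii) I would run the same correspondences in reverse: conjugation invariance of $D(Q)$ and reality of $Q$ on real functions follow because positivity preservation forces $e^{-tL}$ to commute with conjugation, while the two inequalities are read off from positivity preservation and the $L^\infty$-contraction via the form representation $Q(u,u) = \lim_{t \downarrow 0} t^{-1}\langle u - e^{-tL}u, u\rangle$.

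The hard part will be (iii) $\Rightarrow$ (i). After using conjugation invariance to restrict to the real part, I must upgrade the two special inequalities to $Q(Cu, Cu) \le Q(u,u)$ for every normal contraction $C$. First, combining $Q(u_+,u_+) \le Q(u,u)$ with $Q(u\wedge 1, u\wedge 1) \le Q(u,u)$ (applied to $u_+ \ge 0$) yields the unit contraction $u \mapsto (0 \vee u) \wedge 1$. Then I would approximate an arbitrary real normal contraction $C$ pointwise by piecewise-linear contractions $C_n$ assembled from rescaled and translated copies of the unit contraction, so that $Q(C_n u, C_n u) \le Q(u,u)$ holds for each $n$; since $|C_n u| \le |u|$ one gets $C_n u \to Cu$ in $L^2(X,m)$, and the desired bound then follows from the lower semicontinuity of the closed form $Q$ along form-bounded $L^2$-convergent sequences. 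Finally, complex normal contractions are recovered through the real and imaginary decomposition made available by conjugation invariance.

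I expect the main obstacle to be exactly this last step: the explicit assembly of the approximants $C_n$ out of the unit contraction together with the uniform form bound $Q(C_n u, C_n u) \le Q(u,u)$, and the careful invocation of lower semicontinuity (which itself uses closedness of $Q$). The accompanying real/complex bookkeeping in the reduction to $D(Q)_\RR$ is routine but must be kept consistent, since it is the precise point this appendix is designed to settle.
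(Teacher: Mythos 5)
Your first two implications are fine: conjugation is itself a normal contraction, positivity preserving operators commute with conjugation, and (i)$\Rightarrow$(ii), (ii)$\Rightarrow$(iii) are precisely the two Beurling--Deny criteria (which is also all the paper does there: it cites Theorems XIII.50 and XIII.51 of \cite{RS}, together with the same bridging observation that conjugation invariance of $D(Q)$ corresponds to $e^{-tL}$ preserving real-valued functions). The genuine gap is in your step (iii)$\Rightarrow$(i). Every function you can assemble from the unit contraction $u\mapsto (0\vee u)\wedge 1$ by rescaling, translation, negation and composition is a \emph{monotone} function on $\R$ (compositions of monotone maps are monotone; in fact the generated class consists essentially of maps $x\mapsto \theta\,\bigl((x\vee(-\mu))\wedge\nu\bigr)$ and their compositions). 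A pointwise limit of monotone functions is again monotone, so your approximants $C_n$ can never converge pointwise to a non-monotone normal contraction --- and such contractions exist and cannot be ignored: $C(x)=|x|$ and tent-shaped maps are normal contractions. If instead you allow sums and differences in the assembly (which would indeed produce all piecewise linear contractions), you lose the uniform bound $Q(C_nu,C_nu)\le Q(u,u)$, because the property of operating on $Q$ is not stable under linear combinations of contractions. So the approximation-plus-lower-semicontinuity scheme collapses exactly at the point you flagged as the hard one.

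The known repair --- and the proof behind the results the paper cites --- routes the hard implication through the semigroup rather than staying at the level of the form: from the unit contraction one first shows that the resolvents $\alpha(L+\alpha)^{-1}$ are sub-Markovian, using the variational characterization of $\alpha(L+\alpha)^{-1}f$ as the unique minimizer of $v\mapsto Q(v,v)+\alpha\|v-f\|^2$ and the fact that replacing a candidate minimizer by its unit contraction does not increase this functional; the semigroup is then sub-Markovian by the exponential (Yosida) approximation. Finally, for a symmetric sub-Markovian semigroup the approximating forms $Q^{(t)}(u,u)=t^{-1}\langle u-e^{-tL}u,u\rangle$ admit a representation as an integral of $(u(x)-u(y))^2$ against a symmetric measure plus a non-negative killing term, from which $Q^{(t)}(Cu,Cu)\le Q^{(t)}(u,u)$ is immediate for \emph{every} normal contraction $C$; since $Q^{(t)}(u,u)$ increases to $Q(u,u)$ as $t\downarrow 0$ (with the convention $Q=\infty$ off the domain), this yields (i). In other words, your cycle must be reordered to (iii)$\Rightarrow$(ii)$\Rightarrow$(i) (this is the structure of Theorem 1.4.1 in \cite{Fuk}); once that is done, your argument is no longer more self-contained than the paper's citation-based proof --- it is essentially that proof written out.
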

\begin{proof}
The equivalence of (i) and (ii) follows  directly from  Theorems XIII.50 and XIII.51  in  Appendix 1 to Section XIII.12 of \cite{RS}.

\smallskip

Also, it is not hard to see that $D(Q)$ is invariant under complex conjugation with   $Q (u,v)$  real valued for all  real valued $u,v\in D(Q)$ if and only if $e^{-t L}$ maps real valued functions to real valued functions. Given this, the equivalence between (ii) and (iii) follows again  directly from  Theorems XIII.50 and XIII.51  in  Appendix 1 to Section XIII.12 of \cite{RS}.
\end{proof}

Let $L^2_\R (X,m)$ be the subspace of $L^2 (X,m)$ consisting of real-valued functions.
We then  call a non-negative closed symmetric form $Q$ on $L^2_\R (X,m)$ a Dirichlet form (on $L^2_\R (X,m)$) if
$$Q (C u, Cu) \leq Q (u,u)$$
for all $u\in L^2_\R (X,m)$ and every normal contraction $C$ on $\R$. (Here, we again set  $Q(v,v) = \infty$ if $v$ does not belong to the form domain.)
 Essentially from (iii) of the previous theorem we obtain the following corollary.

\begin{coro} (a) Let $Q$ be  a Dirichlet form on $L^2 (X,m)$. Then, the restriction of $Q$ to $L^2_\R (X,m)$ is a Dirichlet form as well.

(b) Let $Q_r$ be a Dirichlet form on $L^2_\R (X,m)$. Then, the form $Q$ with domain
 $$D(Q):= \{ u + i v : u, v\in D(Q_r)\}$$
 and
$$Q (u_1 + i v_1,  u_2 + i v_2) := Q_r(u_1, u_2) +  Q_r(v_1, v_2) + i (Q_r(v_1, u_2) -  Q_r(u_1, v_2))$$
is a Dirichlet form on $L^2 (X,m)$.
\end{coro}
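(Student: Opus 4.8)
The plan is to deduce both statements from the Beurling--Deny-type characterization in the preceding theorem, moving between the real and complex pictures by exploiting that the contraction conditions in part (iii) of that theorem only involve real-valued functions.

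For part (a), I would first invoke the implication (i)$\Rightarrow$(iii) of the preceding theorem to record two structural facts about the complex Dirichlet form $Q$: its domain $D(Q)$ is invariant under complex conjugation, and $Q(u,v)\in\R$ whenever $u,v\in D(Q)$ are real-valued. The second fact guarantees that the restriction $Q_r:=Q|_{D(Q_r)}$, with $D(Q_r):=D(Q)\cap L^2_\R(X,m)$, is a genuine real-valued, non-negative, symmetric form on $L^2_\R(X,m)$ (symmetry passes from the Hermitian symmetry of $Q$ together with real-valuedness). Closedness of $Q_r$ is immediate: a sequence that is Cauchy for $\langle\cdot,\cdot\rangle_{Q_r}$ is Cauchy for $\langle\cdot,\cdot\rangle_{Q}$, hence converges in $D(Q)$; since the form norm dominates the $L^2$-norm and $L^2_\R(X,m)$ is closed in $L^2(X,m)$, the limit is again real and so lies in $D(Q_r)$. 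The remaining point is the Markovian inequality $Q_r(\widetilde{C}u,\widetilde{C}u)\le Q_r(u,u)$ for every real normal contraction $\widetilde{C}$ on $\R$ and every real $u$. Here I would use the key observation that such a $\widetilde{C}$ extends to a normal contraction on $\CC$ by $C(z):=\widetilde{C}(\mathrm{Re}\,z)$; indeed $C(0)=\widetilde{C}(0)=0$ and $|C(z)-C(w)|=|\widetilde{C}(\mathrm{Re}\,z)-\widetilde{C}(\mathrm{Re}\,w)|\le|\mathrm{Re}\,z-\mathrm{Re}\,w|\le|z-w|$. For real $u$ one has $Cu=\widetilde{C}u$, so the Dirichlet property of $Q$ (with the convention that the form value is $\infty$ off the domain, which forces $Cu\in D(Q)$) gives $Q_r(\widetilde{C}u,\widetilde{C}u)=Q(Cu,Cu)\le Q(u,u)=Q_r(u,u)$, exactly as required.

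For part (b), I would verify the three defining ingredients directly. A short expansion of $Q(u_1+iv_1,u_2+iv_2)$ using the stated formula and the symmetry of $Q_r$ confirms that $Q$ is sesquilinear and Hermitian, and that $Q(a,a)=Q_r(a_1,a_1)+Q_r(a_2,a_2)\ge 0$ for $a=a_1+iv\,$; more precisely the cross term in $Q(a,a)$ is purely imaginary and cancels by symmetry of $Q_r$. Closedness follows because the form norm splits as $\|a\|_Q^2=\|a_1\|_{Q_r}^2+\|a_2\|_{Q_r}^2$, so that $D(Q)=D(Q_r)\oplus iD(Q_r)$ inherits completeness from $D(Q_r)$. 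It then remains to check condition (iii) of the preceding theorem: conjugation invariance of $D(Q)$ is clear from this direct-sum description, $Q(u,v)=Q_r(u,v)$ is real for real $u,v\in D(Q)$, and for real $u$ the functions $u_+$ and $u\wedge 1$ are again real, so the two required inequalities reduce to $Q_r(u_+,u_+)\le Q_r(u,u)$ and $Q_r(u\wedge 1,u\wedge 1)\le Q_r(u,u)$. These hold because $(\cdot)_+$ and $\cdot\wedge 1$ are normal contractions on $\R$ and $Q_r$ is a real Dirichlet form (the convention $Q_r(w,w)=\infty$ off the domain covers the cases where $u_+$ or $u\wedge 1$ fails to lie in $D(Q_r)$). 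By the implication (iii)$\Rightarrow$(i) of the theorem, $Q$ is a Dirichlet form on $L^2(X,m)$.

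The routine bookkeeping — sesquilinearity and non-negativity in (b), and closedness in both parts — is unproblematic. The one genuinely load-bearing step is the extension $C(z)=\widetilde{C}(\mathrm{Re}\,z)$ in part (a): it lets me import the full Markovian property directly from the complex side rather than re-proving the real Beurling--Deny reduction from the two canonical contractions $(\cdot)_+$ and $\cdot\wedge 1$ to all normal contractions. I expect this to be the only point requiring any ingenuity; everything else is a direct appeal to the characterization already established.
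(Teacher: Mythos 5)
Your proposal is correct and follows essentially the same route as the paper: both parts are deduced from the preceding characterization theorem, with (a) obtained from the realness of $Q$ on real-valued functions (condition (iii)) together with the contraction property (i), and (b) verified by checking condition (iii) for $Q$ and invoking (iii)$\Rightarrow$(i). Your explicit extension $C(z)=\widetilde{C}(\mathrm{Re}\, z)$ simply makes precise the step the paper leaves implicit when it asserts that (i) yields compatibility of the restricted form with normal contractions on $\R$.
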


\begin{proof} (a) By (iii) of the previous theorem, $Q(u,v)$ is real valued for all real valued $u,v\in D(Q)$. Now, (i) of the previous theorem, shows that the restriction of $Q$ to real valued functions is compatible with  taking normal contractions on $\R$.

\smallskip

(b) It is not hard to see that $Q$ is a symmetric closed non-negative form. Moreover, as $Q_r$ is compatible with contractions on $\R$ it is easy to see that (iii) of the previous theorem holds for $Q$. Thus, the previous theorem shows that $Q$ is a Dirichlet form.
\end{proof}

\medskip

\bigskip

\textbf{Acknowledgements.} Part of this work was done while the authors were visiting the workshop `Analysis on Graphs and its Applications Follow-up Meeting' at the Isaac Newton Institute. The authors would like to thank the organizers for the invitation and gratefully acknowledge  the stimulating atmosphere of the workshop.  In this context DL, SH and MK also gratefully acknowledge  partial support by the German Science Foundation (DFG).  RW acknowledges the financial support of FCT grant SFRH/BPD/45419/2008 and FCT project PTDC/MAT/101007/2008.

\end{document}